\documentclass[11pt]{article}

\usepackage[utf8]{inputenc}
\usepackage[a4paper,margin=1.1in]{geometry}
\usepackage{amsmath,amssymb,amsfonts,amsthm,mathtools,bm}
\usepackage{longtable}
\usepackage{microtype}
\usepackage[numbers,sort&compress]{natbib}
\usepackage{xcolor}
\usepackage[colorlinks=true,linkcolor=blue!70!black,citecolor=green!50!black,urlcolor=blue!80!black]{hyperref}
\usepackage{doi}
\usepackage{enumitem}
\newtheorem{theorem}{Theorem}[section]
\newtheorem{lemma}[theorem]{Lemma}
\newtheorem{proposition}[theorem]{Proposition}
\newtheorem{corollary}[theorem]{Corollary}
\theoremstyle{definition}
\newtheorem{assumption}[theorem]{Assumption}
\theoremstyle{remark}
\newtheorem{remark}[theorem]{Remark}
\newtheorem*{theorem*}{Theorem} 

\DeclareMathOperator{\Var}{Var}     
\DeclareMathOperator{\Cov}{Cov}

\newcommand{\E}{\mathbb{E}}
\newcommand{\R}{\mathbb{R}}
\newcommand{\dd}{\mathrm{d}}

\newcommand{\avg}[1]{\left\langle #1 \right\rangle}

\numberwithin{equation}{section}

\theoremstyle{remark}

\title{Critical Free-Energy Variance in the Sherrington--Kirkpatrick Model}
\author{Miguel Tierz\\[4pt]
{\normalsize\itshape Shanghai Institute for Mathematics and Interdisciplinary Sciences (SIMIS),}\\
{\normalsize\itshape Block A, No.\ 657 Songhu Road, Yangpu District, Shanghai, China}\\[2pt]
{\normalsize\ttfamily tierz@simis.cn}}
\date{}

\hypersetup{pdftitle={Critical Free-Energy Variance in the Sherrington-Kirkpatrick Model},
            pdfauthor={Miguel Tierz},
            pdfkeywords={Sherrington--Kirkpatrick model, free-energy variance, spin-glass susceptibility, Ornstein--Uhlenbeck interpolation},
            pdflang={en}}

\begin{document}
\setlength{\emergencystretch}{2em}
\maketitle

\begin{abstract}
We study the variance of the free energy in the Gaussian
Sherrington--Kirkpatrick model at zero external field. For every system
size and inverse temperature $\beta>0$, we prove
\[
  2N^2\Var F_N^{\rm off}(\beta)
  \le \log\chi+(\beta^2-1)(\chi-1),
  \qquad \chi=N\E\langle R_{1,2}^2\rangle_\beta,
\]
where $R_{1,2}$ is the replica overlap and
$F_N^{\rm off}=N^{-1}\log Z_N^{\rm off}$ omits diagonal disorder.
The proof uses an exact identity for the edges incident to one spin,
Schur complements, and Gaussian disorder interpolation.
At criticality the bound reduces to $2N^2\Var F_N^{\rm off}(1)\le\log\chi$.
An explicit nonnegative integral measures its gap. A second decomposition
relates this gap to a mixed cubic overlap functional. Existing moment
and cavity estimates imply that its required integral is bounded.
Critical susceptibility asymptotics give the sharp logarithmic upper
coefficient and show that the gap converges exactly when the variance
has a constant term.
We extend an entropy bound throughout bounded critical temperature
windows and establish overlap moment stability on a smaller interval.
\end{abstract}

\noindent\textbf{2020 Mathematics Subject Classification.}
Primary 82B44; secondary 60G15, 60K35, 82B27.

\noindent\textbf{Keywords.}
Sherrington--Kirkpatrick model, free-energy variance, spin-glass
susceptibility, Ornstein--Uhlenbeck interpolation, Price's theorem,
disorder chaos.

\section{Introduction}\label{sec:intro}

The Sherrington--Kirkpatrick (SK) model~\cite{SherringtonKirkpatrick1975} is a mean-field spin-glass model. Its thermodynamic limit was established in~\cite{GuerraToninelli2002ThermoLimit}, and the limiting free energy is described by the Parisi formula~\cite{Parisi1979Infinite,Parisi1980OrderParameter,Guerra2003,Talagrand2006Parisi}. For rigorous background on the SK model and mean-field spin glasses, see the monographs~\cite{Talagrand2011Vol1,Panchenko2013Book}. At high temperature $\beta<1$, the unnormalized free energy has $O(1)$ Gaussian fluctuations---equivalently, $F_N$ fluctuates on the $N^{-1}$ scale---by the results in~\cite{AizenmanLebowitzRuelle1987,CometsNeveu1995,GuerraToninelli2002CLT}. At the exact critical inverse temperature $\beta_c=1$ and zero external field $h=0$, the replica symmetric solution is marginally stable in the Almeida--Thouless analysis~\cite{AlmeidaThouless1978,MezardParisiVirasoro1987}. This is the critical regime studied in the fluctuation results~\cite{ChenLam2019,CollinsWoodfinLe2025,DeyKang2026}.

We fix the following basic spin-glass notation. See, for example, \cite[Ch.~1]{Talagrand2011Vol1} and \cite[Chs.~1--2]{Panchenko2013Book}. For configurations $\sigma,\tau\in\{-1,+1\}^N$,
\[
  R_{\sigma,\tau}:=\frac1N\sum_{i=1}^N\sigma_i\tau_i
\]
is their overlap. Replica indices label independent samples from the relevant Gibbs measure, so $R_{a,b}$ means $R_{\sigma^{(a)},\sigma^{(b)}}$. The symbol $\E$ denotes expectation over the Gaussian disorder, while $\langle\cdot\rangle$ denotes Gibbs expectation for a fixed disorder realization. The finite-size spin-glass susceptibility at criticality is
\[
  \chi_{\rm SG}(N):=N\,\E\langle R_{1,3}^2\rangle,
\]
where replicas $1$ and $3$ are independent samples from one critical SK Gibbs measure. When constants are indexed, as in $O_c(1)$, the constant may depend on the fixed parameter $c$ but not on $N$. $A\lesssim B$ means $A=O(B)$, and $A\asymp B$ means both $A\lesssim B$ and $B\lesssim A$.

Throughout this paper $F_N=N^{-1}\log Z_N$ is the normalized free energy. Some cited fluctuation papers instead write $F_N$ for the unnormalized log-partition function $\mathcal F_N=\log Z_N$. Their variances are therefore $N^2$ times the variance convention used here.
Our partition function uses counting measure on $\{\pm1\}^N$.
For comparison with probability spin measures, the precise conversion is
\begin{equation}\label{eq:probability-normalization}
  \mathcal F_N^{\rm prob}(\beta)
  :=\log\bigl(2^{-N}Z_N(\beta)\bigr)
  =NF_N(\beta)-N\log2.
\end{equation}
Thus the choice of spin measure affects the mean through $N\log2$ and
leaves the unnormalized variance unchanged.
In our normalization, the critical finite-size prediction, summarized in~\cite{CollinsWoodfinLe2025} and now proved, is
\[
  \Var F_N(1)=\frac16\frac{\log N}{N^2}+O(N^{-2}).
\]
The corresponding critical Gaussian fluctuation scale for the two-spin spherical SK model was proved in~\cite{Landon2022SphericalCritical}. For the connection between bond chaos and free-energy fluctuations in SK, see~\cite{Aspelmeier2008PRL,Aspelmeier2008JStatMech}. An earlier general upper bound was proved in~\cite{ChenLam2019}, namely $\Var F_N(1)\le C(\log N)^2/N^2$ for a constant $C$ independent of $N$. A variance formula and Gaussian central limit theorem for the unnormalized off-diagonal log-partition function were proved in~\cite{DeyKang2026} for sequences $\beta_N\to1^-$ satisfying
$N^{1/3}(1-\beta_N^2)\to c\in(0,\infty]$.  In particular, when
$N^{1/3}(1-\beta_N^2)$ converges to a positive finite constant, the variance is $\frac16\log N+O(1)$, or $\frac16(\log N)/N^2+O(N^{-2})$ in the normalized convention used here.

At exact criticality, the following results establish the logarithmic scale
and the sharp asymptotics. Both papers work with the
unnormalized log-partition function---Du and Huang in the Gaussian orthogonal ensemble (GOE) convention
adopted here, Schertzer in the off-diagonal convention---so their statements
are quoted below in the normalized convention, the diagonal conventions
differing only by the explicit $O(N^{-2})$ shift of
Lemma~\ref{lem:GOE-vs-offdiag}.
Schertzer~\cite{Schertzer2026CriticalSK} proved, by elementary arguments, the
unconditional bounds
\[
  \Var F_N(1)\le\frac14\frac{\log N}{N^2}+O(N^{-2}),
  \qquad
  \Var F_N(1)\ge\frac{\log\log\log N}{2N^2}-O(N^{-2}),
\]
the second showing that $N^2\Var F_N(1)$ diverges.  The upper bound rests on a second-moment inequality obtained from Gaussian convexity, $\Var(\log Z_N)\le\log\bigl(\E Z_N^2/(\E Z_N)^2\bigr)$, whose
right-hand side grows at criticality as the Curie--Weiss scale
$\tfrac14\log N+O(1)$. The lower bound sums simple disorder cycles in
the Hermite expansion of the variance~\cite{Chatterjee2009}, with
errors controlled by the classical critical overlap bound
$\E\langle R_{1,3}^2\rangle=O(N^{-1/2})$~\cite[Proposition~11.7.8]{Talagrand2011Vol2}.
Du and Huang~\cite{DuHuang2026} then proved the full prediction,
\[
  \Var F_N(1)=\frac16\frac{\log N}{N^2}+O(N^{-2}),
\]
together with a Gaussian central limit theorem and the critical overlap
scaling $\E\langle R_{1,3}^2\rangle\asymp N^{-2/3}$---that is,
$\chi_{\rm SG}(N)\asymp N^{1/3}$---conjectured in~\cite[Research Problem~11.7.4]{Talagrand2011Vol2}. The key input
is an $L^2$ comparison between the Ising and spherical SK partition
functions.  The present paper instead develops an elementary finite-$N$
identity at exact criticality.  It yields an upper bound in terms of susceptibility and identifies an explicit signed remainder governing saturation.
The critical overlap input is stated separately. Proposition~\ref{prop:cubic-from-moments}
verifies the integrated remainder condition directly from existing moment
and cavity estimates, without using the final variance theorem.

A subsequent proof of the critical variance asymptotics uses entropy,
Gaussian convexity, and a comparison with Gaussian synchronization
\cite[Theorem~3]{YimingChen2026}. A further development identifies the
limiting critical overlap distribution and proves
\begin{equation}\label{eq:chi-limit}
  N^{-1/3}\chi_{\rm SG}(N)\longrightarrow C_\chi\in(0,\infty)
\end{equation}
\cite[Theorem~1.1 and Corollary~1.2]{DuHuang2026Overlap}.
Here $C_\chi$ is the expected second moment of the random limiting
overlap measure defined in that work. Our focus is the finite-$N$
relation between variance and susceptibility. In the off-diagonal
convention, the bound has the exact form
$2N^2\Var F_N^{\rm off}(1)\le\log\chi_{\rm SG}(N)$.
Theorems~\ref{thm:intro-SK-upper} and~\ref{prop:finite-beta-bound}
state the versions at criticality and at general inverse temperature.
Proposition~\ref{prop:exact-critical-gap} identifies the integral gap.
A companion paper in preparation~\cite{Tierz2026Constants} develops
the spectral analysis of the critical spherical model and its comparison
with the Ising model. When the finite part of the critical variance
converges, we write
\begin{equation}\label{eq:intro-variance-constant}
  N^2\Var F_N^{\rm off}(1)
  =\frac16\log N+c_{\rm Ising}+o(1).
\end{equation}
Using~\eqref{eq:chi-limit} and the integrated remainder bound of
Proposition~\ref{prop:cubic-from-moments},
Proposition~\ref{prop:limiting-critical-gap} below proves that the nonnegative integral gap is bounded and converges
if and only if the limit~\eqref{eq:intro-variance-constant} exists, in
which case its limit is $\log C_\chi-2c_{\rm Ising}$. This identifies
the finite difference between the susceptibility bound and the variance.
The existence proof and the numerical evaluation of the constant are
deferred to that paper and are not used here. The present paper retains
the finite-size inequalities and their overlap interpretation.

A static Gaussian Poincar\'e estimate already relates the free-energy
variance to the overlap susceptibility.  Writing
\[
  \chi_{\rm SG}(N;\beta)
  :=N\,\E\langle R_{1,3}^2\rangle_\beta,
  \qquad
  \chi_{\rm SG}(N;1)=\chi_{\rm SG}(N),
\]
it gives
\begin{equation}\label{eq:intro-poincare}
  \Var F_N(\beta)
  \le \frac{\beta^2}{2N}\,\E\langle R_{1,3}^2\rangle_\beta
  =\frac{\beta^2}{2N^2}\,\chi_{\rm SG}(N;\beta),
\end{equation}
where replicas $1$ and $3$ are independent samples from the same Gibbs measure. This is the Gaussian Poincar\'e inequality applied to the disorder variables, with the SK covariance normalization $\xi(r)=r^2/2$~\cite{BakryGentilLedoux2014,Chatterjee2014,Panchenko2013Book}. The scale $\chi_{\rm SG}(N)\asymp N^{1/3}$, long associated with the critical SK coefficient in the fluctuation and finite-size scaling literature~\cite{AspelmeierKatzgraberLarsonMooreWittmannYeo2016,CollinsWoodfinLe2025,DeyKang2026}, is now a theorem~\cite{DuHuang2026}. At that scale, \eqref{eq:intro-poincare} is much larger than $(\log N)/N^2$. The Ornstein--Uhlenbeck (OU) identity used below is therefore not only a variance bound. It records how the overlap response decays as the two disorders are decorrelated. A static comparison between overlap and variance, integrated over $\beta$ and based on the annealed--quenched gap, appears in~\cite[Proposition~3]{Schertzer2026CriticalSK}.

Our result has two parts. First, we prove an exact finite-$N$ incident-edge identity via Price's theorem~\cite{Price1958} whose signed remainder structure gives an unconditional one-sided theorem:
\[
  \Var F_N(1)
  \le
  \frac{1+\log\chi_{\rm SG}(N)}{2N^2},
  \qquad
  \chi_{\rm SG}(N):=N\,\E\langle R_{1,3}^2\rangle.
\]
This converts any susceptibility bound $\chi_{\rm SG}(N)\le N^{\gamma+o(1)}$ into the upper-bound coefficient $\gamma/2$.  The self-contained endpoint input $\chi_{\rm SG}(N)\le(N+1)/2$ of Remark~\ref{rem:endpoint-sanity} already improves the logarithmic power in the bound~\cite{ChenLam2019} from $(\log N)^2$ to $\log N$. The classical critical overlap bound $\chi_{\rm SG}(N)=O(N^{1/2})$~\cite[Proposition~11.7.8]{Talagrand2011Vol2} gives the coefficient $1/4$, and the overlap scaling in~\cite{DuHuang2026} gives the sharp coefficient $1/6$ (Theorem~\ref{thm:intro-SK-upper} and Remark~\ref{rem:dictionary-instances}), without using the fluctuation analysis in that paper. Second, the signed remainders characterize the gap in this inequality.
Their integrated bound follows from the moment and cavity estimates
of~\cite[Propositions~5.1 and~5.7]{DuHuang2026}, applied directly to the
exact identity in Proposition~\ref{prop:cubic-from-moments}. The same
estimates also bound the mixed cubic functional. This supplies a
verification within the remainder decomposition using explicitly cited
critical inputs.

\subsection{Main results}\label{sec:intro-main}

We work with the GOE normalization
\[
  H_N(\sigma)=\frac{1}{2\sqrt N}\sum_{1\le i,j\le N}g_{ij}\sigma_i\sigma_j,
  \qquad \sigma_i\in\{\pm1\},
\]
where $g=(g_{ij})$ is a standard GOE matrix, normalized so that
$g_{ij}=g_{ji}\sim N(0,1)$ for $i<j$ and $g_{ii}\sim N(0,2)$.  With
$R_{\sigma,\tau}=N^{-1}\sum_i\sigma_i\tau_i$, this gives
\[
  \Cov(H_N(\sigma),H_N(\tau))=\frac N2 R_{\sigma,\tau}^2.
\]

\paragraph{Disorder convention.}
The diagonal terms in the GOE convention are spin-independent and produce an
explicit $O(N^{-2})$ variance shift relative to the off-diagonal convention.

\begin{lemma}[GOE vs. off-diagonal normalization]\label{lem:GOE-vs-offdiag}
Let
\[
  H_N^{\mathrm{GOE}}(\sigma):=\frac{1}{2\sqrt{N}}\sum_{i,j=1}^N g_{ij}\,\sigma_i\sigma_j
\]
with $(g_{ij})$ a standard GOE matrix, and let
\[
  H_N^{\mathrm{off}}(\sigma):=\frac{1}{\sqrt{N}}\sum_{1\le i<j\le N} g_{ij}\,\sigma_i\sigma_j.
\]
Then
\[
  H_N^{\mathrm{GOE}}(\sigma)=H_N^{\mathrm{off}}(\sigma)+D_N,
  \qquad D_N:=\frac{1}{2\sqrt N}\sum_{i=1}^N g_{ii},
\]
and $D_N$ is spin-independent with $\Var(D_N)=1/2$. Consequently,
\[
  \Var(F_N^{\mathrm{GOE}}(\beta))
  =\Var(F_N^{\mathrm{off}}(\beta))+\frac{\beta^2}{2N^2}.
\]
\end{lemma}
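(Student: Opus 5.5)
The plan is to split the double sum in $H_N^{\mathrm{GOE}}$ into its diagonal and off-diagonal parts, and then use that the diagonal part is both spin-independent and independent of the off-diagonal disorder. For $i\neq j$, symmetry $g_{ij}=g_{ji}$ gives
\[
  \frac{1}{2\sqrt N}\sum_{i\neq j}g_{ij}\sigma_i\sigma_j=\frac{1}{\sqrt N}\sum_{i<j}g_{ij}\sigma_i\sigma_j=H_N^{\mathrm{off}}(\sigma).
\]
For $i=j$ we have $\sigma_i^2=1$, so the diagonal terms sum to $D_N$. This proves the decomposition.

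The variance of $D_N$ follows from the independence of the diagonal entries, each with variance $2$:
\[
  \Var(D_N)=\frac{1}{4N}\cdot N\cdot 2=\frac12 .
\]

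For the free energy, I would factor the spin-independent term out of the partition function. Taking the convention $Z_N=\sum_\sigma e^{\beta H_N(\sigma)}$, we get $Z_N^{\mathrm{GOE}}=e^{\beta D_N}Z_N^{\mathrm{off}}$, hence
\[
  F_N^{\mathrm{GOE}}=F_N^{\mathrm{off}}+\frac{\beta}{N}D_N .
\]
The sign convention in the exponent is irrelevant, because $D_N$ is symmetric. The key point is that $F_N^{\mathrm{off}}$ is a measurable function of $(g_{ij})_{i<j}$ alone. In the GOE these entries are independent of the diagonal $(g_{ii})_i$, so $F_N^{\mathrm{off}}$ and $D_N$ are independent and the covariance term vanishes. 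Therefore
\[
  \Var F_N^{\mathrm{GOE}}=\Var F_N^{\mathrm{off}}+\frac{\beta^2}{N^2}\Var(D_N)=\Var F_N^{\mathrm{off}}+\frac{\beta^2}{2N^2}.
\]

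The only step needing any care is the finiteness of $\Var F_N^{\mathrm{off}}$, which is required for the variance addition to be meaningful. I would handle it by noting that $F_N^{\mathrm{off}}$ is Lipschitz in the Gaussian entries; alternatively, the crude bound $|\log Z_N^{\mathrm{off}}|\le N\log 2+\beta\max_\sigma|H_N^{\mathrm{off}}(\sigma)|$ gives all moments. I do not expect a genuine obstacle here.
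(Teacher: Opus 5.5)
Your proposal is correct and follows the paper's own argument: split off the spin-independent diagonal term, compute $\Var(D_N)=1/2$, factor $Z_N^{\mathrm{GOE}}=e^{\beta D_N}Z_N^{\mathrm{off}}$, and use independence of the diagonal and off-diagonal disorder so the covariance term vanishes. Your remarks on combining the $i\ne j$ terms via $g_{ij}=g_{ji}$ and on the finiteness of $\Var F_N^{\mathrm{off}}$ just make explicit what the paper leaves implicit.
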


\begin{proof}
Split the GOE sum into off-diagonal and diagonal parts. Since $\sigma_i^2=1$,
\[
  H_N^{\mathrm{GOE}}(\sigma)=H_N^{\mathrm{off}}(\sigma)+\frac{1}{2\sqrt N}\sum_i g_{ii}.
\]
The diagonal term is independent of the off-diagonal disorder and has variance
$N\cdot2/(4N)=1/2$.  The variance identity follows from
$Z_N^{\mathrm{GOE}}(\beta)=e^{\beta D_N}Z_N^{\mathrm{off}}(\beta)$.
\end{proof}

In particular, the GOE and off-diagonal conventions induce exactly the same
Gibbs measures.  All Gibbs expectations, overlap quantities,
$\chi_N(t)$, and $\chi_{\rm SG}(N)$ are therefore identical. Only the
free-energy variance changes by the explicit shift in
Lemma~\ref{lem:GOE-vs-offdiag}.  Consequently all leading statements below
also hold in the off-diagonal convention.

Let
\[
  F_N(\beta):=\frac1N\log Z_N(\beta),
  \qquad Z_N(\beta):=\sum_\sigma e^{\beta H_N(\sigma)}.
\]
For $t\ge0$ set $\rho_t=e^{-t}$ and let
\[
  H_{N,t}^{(a)}:=\sqrt{\rho_t}\,H_N+\sqrt{1-\rho_t}\,\widetilde H_N^{(a)},
  \qquad a=1,2,
\]
where $H_N,\widetilde H_N^{(1)},\widetilde H_N^{(2)}$ are independent.
The notation $\langle\cdot\rangle_t$ denotes the product Gibbs expectation
at $\beta=1$.  Replicas $1,3$ are sampled from the first copy and replicas
$2,4$ from the second copy, unless otherwise stated.  Define
\[
  \chi_N(t):=N\,\E\langle R_{1,2}^2\rangle_t,
  \qquad
  \chi_{\rm SG}(N):=N\,\E\langle R_{1,3}^2\rangle=\chi_N(0).
\]

\begin{theorem}[Finite-size susceptibility bound]\label{thm:intro-SK-upper}
For the SK model at $(\beta,h)=(1,0)$ and every $N\ge1$,
\begin{equation}\label{eq:intro-exact-upper}
  2N^2\Var F_N(1)\le1+\log\chi_{\rm SG}(N),
  \qquad
  2N^2\Var F_N^{\rm off}(1)\le\log\chi_{\rm SG}(N).
\end{equation}
Thus $\chi_{\rm SG}(N)\le N^{\gamma+o(1)}$ gives the upper
coefficient $\gamma/2$. In particular, the critical susceptibility
asymptotics~\eqref{eq:chi-limit} imply
\begin{equation}\label{eq:intro-constant-upper}
  \limsup_{N\to\infty}\left(
    N^2\Var F_N^{\rm off}(1)-\frac16\log N\right)
  \le\frac12\log C_\chi.
\end{equation}
In the GOE convention the right-hand side is
$(1+\log C_\chi)/2$.
\end{theorem}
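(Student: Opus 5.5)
The plan is to write the variance as an Ornstein--Uhlenbeck interpolation integral of the two-copy overlap, and then to show that $\chi_N$, as a function of the correlation $\rho=\rho_t$, obeys a Riccati-type differential inequality which integrates to a logarithm. I work throughout in the off-diagonal convention. The GOE statement then follows from Lemma~\ref{lem:GOE-vs-offdiag}: the shift $\beta^2/(2N^2)$ at $\beta=1$ contributes exactly $+1$ to $2N^2\Var F_N(1)$.

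\emph{Step 1 (OU representation).} Interpolating between $\rho=1$ and $\rho=0$ gives the standard semigroup formula
\[
\Var F_N^{\rm off}(1)=\frac{1}{N^2}\int_0^\infty e^{-t}\,\E\bigl\langle \Cov^{\rm off}(H_N(\sigma^1),H_N(\sigma^2))\bigr\rangle_t\,\dd t .
\]
In the off-diagonal convention, $\Cov^{\rm off}(H_N(\sigma),H_N(\tau))=\frac N2R_{\sigma,\tau}^2-\frac12$, since the $i=j$ terms are removed. Substituting $\rho=e^{-t}$ and writing $\chi_N$ as a function of $\rho$, this becomes
\[
2N^2\Var F_N^{\rm off}(1)=\int_0^1\bigl(\chi_N(\rho)-1\bigr)\,\dd\rho .
\]
The endpoint values are known. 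At $\rho=1$ the two copies coincide and $\chi_N=\chi_{\rm SG}(N)$. At $\rho=0$ they are independent, and by the spin-flip symmetry at $h=0$ we have $\E\langle\sigma_i^1\sigma_j^1\rangle\langle\sigma_i^2\sigma_j^2\rangle=\delta_{ij}$, so $\chi_N=1$.

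\emph{Step 2 (key inequality).} The goal is to prove
\[
\frac{\dd\chi_N}{\dd\rho}\ \ge\ \chi_N(\chi_N-1)\qquad(0\le\rho\le1).
\]
Given this, $\chi_N-1\le \dd\log\chi_N/\dd\rho$, and integrating over $[0,1]$ yields
\[
\int_0^1(\chi_N-1)\,\dd\rho\le\log\chi_{\rm SG}(N)-\log 1=\log\chi_{\rm SG}(N),
\]
which is the claim.

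To derive the inequality, I would differentiate $\E\langle R_{1,2}^2\rangle_\rho$ in $\rho$ using Price's theorem. The $\rho$-derivative of a Gaussian expectation equals the expectation of mixed second derivatives in the cross-covariance directions $g_{ij}\leftrightarrow\widetilde g_{ij}$. This produces a sum over edges $(i,j)$ of connected two-point functions $\langle\sigma_i\sigma_j\rangle$ across the two copies. I would organize this sum by the edges incident to a single spin, which is the incident-edge identity. The leading term is then a square of the cross-copy correlation matrix $M_{ij}=\langle\sigma_i\sigma_j\rangle_1\langle\sigma_i\sigma_j\rangle_2$; at $\beta=1$ its normalized trace gives $\chi_N^2$, and subtracting the $i=j$ part gives $-\chi_N$.

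\emph{Step 3 (main obstacle).} The hard part is showing that all remaining terms are nonnegative, i.e.\ that the signed remainder has the right sign. These terms involve truncated correlations of the Gibbs measure. I would control them by viewing the relevant covariance matrices as positive semidefinite. The Schur complement with respect to one spin's incident block should then express the remainder as a quadratic form in a PSD matrix, and hence as $\ge0$. One also needs $\E$ of a square to dominate the square of $\E$ for the $\chi_N^2$ term, which follows from Jensen/Cauchy--Schwarz. The critical value $\beta=1$ is exactly what makes the coefficient of $\chi_N^2$ match.

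\emph{Step 4 (asymptotics).} Once \eqref{eq:intro-exact-upper} holds, the coefficient claim is immediate: $\chi_{\rm SG}(N)\le N^{\gamma+o(1)}$ gives $\log\chi_{\rm SG}(N)\le(\gamma+o(1))\log N$. For \eqref{eq:intro-constant-upper}, insert \eqref{eq:chi-limit} to get
\[
\log\chi_{\rm SG}(N)=\tfrac13\log N+\log C_\chi+o(1),
\]
and then divide by $2$. The GOE constant $(1+\log C_\chi)/2$ comes from the same $+1$ shift noted at the start.
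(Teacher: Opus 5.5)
Your Steps~1 and~4 are fine. Step~1 is the paper's OU identity in the off-diagonal convention, $2N^2\Var F_N^{\rm off}(1)=\int_0^1(\widehat\chi_N(\rho)-1)\,\dd\rho$. One small correction there: $\widehat\chi_N(0)=1$ needs the local gauge flip $\sigma_i\mapsto-\sigma_i$, $g_{ik}\mapsto-g_{ik}$. The global flip leaves $\sigma_i\sigma_j$ invariant and so says nothing about $\E\langle\sigma_i\sigma_j\rangle$.

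\textbf{The gap is Step~2.} The Riccati inequality $\widehat\chi_N'\ge\widehat\chi_N(\widehat\chi_N-1)$ is false, so no sign argument in Step~3 can produce it. Already at $N=2$, $\widehat\chi_2(\rho)=1+\phi(\rho)$ with $\phi(\rho)=\E_\rho[\tanh(g^1/\sqrt2)\tanh(g^2/\sqrt2)]$. Price's theorem gives $\widehat\chi_2'(1)=\frac12\E\,\mathrm{sech}^4(g/\sqrt2)\approx0.30$. On the other hand, $\phi(1)=\E\tanh^2(g/\sqrt2)\approx0.27$, so $\widehat\chi_2(1)(\widehat\chi_2(1)-1)\approx0.35$. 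The inequality therefore fails on an interval near $\rho=1$. The profile is nearly linear, $\phi\approx a_1^2\rho$, and does not grow at the quadratic rate you need.

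Structurally, differentiating in the full cross-correlation produces $\Cov(\sigma_i\sigma_k,\sigma_u\sigma_v)$ over all edges. For edges disjoint from $\{i,k\}$ these are genuine four-point functions, not quadratic forms in a Schur complement. When indices coincide, $\sigma^2=1$ replaces the Wick value behind your leading $\widehat\chi_N^2$ term by a smaller one; for example $\Cov(\sigma_i\sigma_k,\sigma_i\sigma_v)=G_{kv}-G_{ik}G_{iv}$ rather than $G_{kv}+G_{ik}G_{iv}$, and at $N=2$ one gets $1-G_{12}^2$ instead of $1+G_{12}^2$. You need a \emph{lower} bound on $\widehat\chi_N'$, and this Ising correction pushes in exactly the wrong direction.

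\textbf{The missing idea.} The paper uses the same Ising correction in the favourable direction, to get an \emph{upper} bound on $\widehat\chi_N$ rather than a lower bound on its derivative.
\begin{itemize}
\item It varies only the cross-correlation $\lambda$ of the edges incident to one endpoint $j$ of $G_{ij}$. Then $\partial_{J_{kj}}G_{ij}=N^{-1/2}(G_{ik}-G_{ij}G_{kj})$ exactly, so Price's theorem involves only the Schur complement $\widehat G^{(j)}$ of the two-point matrix.
\item It integrates in $\lambda$ from $0$, where $\E_{j,0}[G_{ij}^1G_{ij}^2]=0$ by the gauge flip at $j$, up to $\rho$.
\item It sums over $i,j$ with the fixed site excluded. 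The signs come from $0\preceq\widehat G^{(j)}\preceq G^{\backslash j}$ with the Schur product theorem, and from Hermite--Mehler monotonicity in $\lambda$.
\end{itemize}
This gives the \emph{algebraic}, not differential, bound $(1-\rho+2\rho/N)\,\widehat\chi_N(\rho)\le1+\rho/N$, hence $\widehat\chi_N(\rho)\le(1-\rho)^{-1}$.

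Combine this with the Hermite--Mehler bound $\widehat\chi_N(\rho)\le\chi_{\rm SG}(N)$ and split $\int_0^1$ at $r_*=1-1/\chi_{\rm SG}(N)$. The result is $\int_0^1\widehat\chi_N\,\dd\rho\le1+\log\chi_{\rm SG}(N)$, which is the theorem. Your log-derivative trick would land on exactly $\log\chi_{\rm SG}(N)$, but the input it needs does not hold for the model.
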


The proof of~\eqref{eq:intro-exact-upper} uses no asymptotic overlap
input. The last assertion uses only~\eqref{eq:chi-limit}, not the
critical variance theorem. More generally,
Theorem~\ref{prop:finite-beta-bound} proves, for every $\beta>0$,
\begin{equation}\label{eq:intro-all-beta}
  2N^2\Var F_N^{\rm off}(\beta)
  \le\log\chi+(\beta^2-1)(\chi-1),
  \qquad \chi=\chi_{\rm SG}(N;\beta).
\end{equation}

The core input is the exact incident-edge identity obtained from Price's
theorem~\cite{Price1958}.  Put
\[
  \theta_N(t):=1-\rho_t+\frac{2\rho_t}{N}.
\]
The global Schur and interpolation remainders are the normalized sums of the
per-pair quantities in \eqref{eq:R-schur}--\eqref{eq:R-interp}:
\[
  \mathfrak R_N^{\rm Schur}(t)
  :=\frac1N\sum_{j=1}^N\sum_{i\ne j}\mathcal R_{ij}^{\rm Schur}(t),
  \qquad
  \mathfrak R_N^{\rm interp}(t)
  :=\frac1N\sum_{j=1}^N\sum_{i\ne j}\mathcal R_{ij}^{\rm interp}(t).
\]
Proposition~\ref{prop:Qe-direct-ladder} gives
\begin{equation}\label{eq:intro-Qe-exact-identity}
  \theta_N(t)\chi_N(t)
  =1+\frac{\rho_t}{N}
  +\mathfrak R_N^{\rm Schur}(t)
  +\mathfrak R_N^{\rm interp}(t),
\end{equation}
and Lemma~\ref{lem:remainder-signs} gives
$\mathfrak R_N^{\rm Schur}(t)\le0$ and
$\mathfrak R_N^{\rm interp}(t)\le0$.

\begin{assumption}[Integrated remainder condition]\label{ass:SK-remainder-closure}
For $c>1$ define
\begin{equation}\label{eq:def-depletion-remainder}
  \mathcal D_N^{(c)}
  :=\int_{c/\chi_{\rm SG}(N)}^1 e^{-t}
  \frac{-\mathfrak R_N^{\rm Schur}(t)-\mathfrak R_N^{\rm interp}(t)}
       {\theta_N(t)}\,\dd t,
\end{equation}
We regard $[c/\chi_{\rm SG}(N),1]$ as empty when
$c/\chi_{\rm SG}(N)\ge1$, and every integral over that window is
defined to be zero.  The condition used in the saturation criterion is that, for one fixed $c>1$---or,
equivalently by Lemma~\ref{lem:cutoff-stability}, for every fixed $c>1$---
\begin{equation}\label{eq:ass-SK-integrated-rem}
  \mathcal D_N^{(c)}=O_c(1).
\end{equation}
\end{assumption}

\begin{theorem}[Integrated signed-remainder decomposition]\label{thm:intro-remainder}
For every fixed $c>1$,
\[
  2N^2\Var F_N(1)
  =\log\chi_{\rm SG}(N)-\mathcal D_N^{(c)}+O_c(1),
  \qquad \mathcal D_N^{(c)}\ge0.
\]
This is proved in Proposition~\ref{prop:saturation-remainder}.
\end{theorem}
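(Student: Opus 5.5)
The decomposition comes from the Ornstein--Uhlenbeck representation of the variance, into which the exact identity \eqref{eq:intro-Qe-exact-identity} is substituted. We work first in the off-diagonal convention and use Lemma~\ref{lem:GOE-vs-offdiag} at the end; the GOE shift $\beta^2/(2N^2)$ contributes exactly the additive $1$, which is absorbed into $O_c(1)$. The OU interpolation $H_{N,t}^{(a)}$ gives the standard covariance formula
\[
  \Var\bigl(\log Z_N\bigr)=\int_0^\infty \frac{\dd}{\dd t}\Bigl(-\Cov\bigl(\log Z^{(1)}_t,\log Z^{(2)}_t\bigr)\Bigr)\dd t
  =\frac N2\int_0^\infty e^{-t}\,\E\langle R_{1,2}^2\rangle_t\,\dd t
\]
in the GOE normalization. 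Here $\log Z_t^{(a)}$ is the log-partition function of $H_{N,t}^{(a)}$, and the covariance of the two copies is $\rho_t\Cov(H,H)$. In terms of $\chi_N(t)$ this reads
\[
  2N^2\Var F_N(1)=\int_0^\infty e^{-t}\chi_N(t)\,\dd t .
\]
The off-diagonal version has the off-diagonal covariance $\frac N2(R^2-1/N)$, which subtracts exactly $\int_0^\infty e^{-t}\,\dd t=1$.

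Next we insert \eqref{eq:intro-Qe-exact-identity}, writing
$\chi_N(t)=\bigl(1+\rho_t/N+\mathfrak R^{\rm Schur}_N+\mathfrak R^{\rm interp}_N\bigr)/\theta_N(t)$.
We split the integral at $t_0=c/\chi_{\rm SG}(N)$ and at $t=1$. On $[1,\infty)$ we have $\chi_N(t)\le\chi_N$ bounded via $\theta_N(t)\ge 1-e^{-1}$ and the nonpositivity of both remainders (Lemma~\ref{lem:remainder-signs}), so this piece is $O(1)$. On $[t_0,1]$ the leading term gives
\[
  \int_{t_0}^1 \frac{e^{-t}(1+\rho_t/N)}{\theta_N(t)}\,\dd t .
\]
Since $\theta_N(t)=1-e^{-t}+2e^{-t}/N\asymp t+1/N$, this integral equals
\[
  \log\frac{1}{t_0+2/N}+O(1)=\log\chi_{\rm SG}(N)-\log c+O(1),
\]
provided $t_0\gtrsim1/N$. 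That holds because $\chi_{\rm SG}(N)\le(N+1)/2$ by Remark~\ref{rem:endpoint-sanity}, so $t_0\ge 2c/(N+1)$. The remainder part on this window is exactly $-\mathcal D_N^{(c)}$ by definition \eqref{eq:def-depletion-remainder}.

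On $[0,t_0]$ we need the integral to be $O_c(1)$. The upper bound is immediate: OU monotonicity, since $\E\langle R_{1,2}^2\rangle_t$ is nonincreasing in $t$ by the Hermite expansion, gives $\chi_N(t)\le\chi_N(0)=\chi_{\rm SG}(N)$. Hence the piece is at most $t_0\chi_{\rm SG}(N)=c$, and nonnegativity gives the lower bound $0$. When $t_0\ge1$ the window is empty by convention; then $\chi_{\rm SG}(N)\le c$, the whole integral is $O_c(1)$, and $\log\chi_{\rm SG}(N)=O_c(1)$ as well. Combining the pieces gives
\[
  2N^2\Var F_N(1)=\log\chi_{\rm SG}(N)-\mathcal D_N^{(c)}+O_c(1),
\]
and $\mathcal D_N^{(c)}\ge0$ follows from Lemma~\ref{lem:remainder-signs} together with $\theta_N>0$.

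The main obstacle is the bookkeeping of the leading logarithm. One must check that the $2/N$ in $\theta_N$ and the $\rho_t/N$ correction do not shift the logarithm by more than $O(1)$ uniformly in $N$. This is precisely why the endpoint bound $\chi_{\rm SG}(N)\lesssim N$ is needed: it guarantees $t_0$ dominates $1/N$. One must also confirm the OU monotonicity of $\chi_N(t)$, which I would take from the positive-coefficient Hermite/Price expansion underlying Proposition~\ref{prop:Qe-direct-ladder}.
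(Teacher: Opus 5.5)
Your proof follows the paper's proof of Proposition~\ref{prop:saturation-remainder} step for step. It uses the OU identity $2N^2\Var F_N(1)=\int_0^\infty e^{-t}\chi_N(t)\,\dd t$ and splits the integral at $c/\chi_{\rm SG}(N)$ and at $1$. Below the cutoff, the monotonicity bound $\chi_N(t)\le\chi_{\rm SG}(N)$ caps the contribution at $c$. On the window, the exact identity applies, and its remainder part equals $-\mathcal D_N^{(c)}$ by definition. Corollary~\ref{cor:upper-profile} controls $[1,\infty)$, and the empty-window case is treated separately.

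One step does not follow as written: the evaluation of the explicit window integral. Two-sided comparability $\theta_N(t)\asymp t+1/N$ only places $\int_{t_0}^1 e^{-t}\theta_N(t)^{-1}\,\dd t$ between constant multiples of $\log\bigl(1/(t_0+2/N)\bigr)$. That would leave an undetermined coefficient in front of $\log\chi_{\rm SG}(N)$, which is exactly the quantity the theorem pins to $1$. You need an additive comparison instead.

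The paper obtains it in Lemma~\ref{lem:explicit-window-integral} by substituting $\rho=e^{-t}$. This turns $e^{-t}\,\dd t/\theta_N(t)$ into $\dd\rho/(1-\rho(1-2/N))$, which has an explicit logarithmic antiderivative, and the prefactor $N/(N-2)$ costs only $O(N^{-1}\log N)$. Equivalently, observe that
\[
  \frac{e^{-t}}{\theta_N(t)}-\frac{1}{t+2/N}
  =\frac{(1+t)e^{-t}-1}{\theta_N(t)\,(t+2/N)} .
\]
Here the numerator lies in $[-t^2/2,0]$, and $\theta_N(t)\ge e^{-1}(t+2/N)$ on $[0,1]$. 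So the difference is bounded by $e/2$ and integrates to $O(1)$. The numerator correction $\rho_t/N$ contributes $O(N^{-1}\log N)$.

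With this repair, your endpoint observation closes the argument: $t_0\ge c/N$ holds already from the trivial bound $\chi_{\rm SG}(N)\le N$, so the window integral is $\log\chi_{\rm SG}(N)-\log c+O(1)$.
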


\begin{theorem}[Saturation criterion]\label{thm:intro-SK}
Under Assumption~\ref{ass:SK-remainder-closure},
\[
  \Var F_N(1)
  =\frac{\log\chi_{\rm SG}(N)}{2N^2}+O(N^{-2}).
\]
\end{theorem}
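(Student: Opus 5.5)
The saturation criterion is essentially a corollary of Theorem~\ref{thm:intro-remainder}, so the plan is to combine that decomposition with Assumption~\ref{ass:SK-remainder-closure}. We fix one $c>1$ for which \eqref{eq:ass-SK-integrated-rem} holds; by Lemma~\ref{lem:cutoff-stability} the choice does not matter. Theorem~\ref{thm:intro-remainder} with this $c$ gives
\[
  2N^2\Var F_N(1)=\log\chi_{\rm SG}(N)-\mathcal D_N^{(c)}+O_c(1),
\]
and the assumption gives $0\le\mathcal D_N^{(c)}=O_c(1)$. Absorbing $\mathcal D_N^{(c)}$ into the error term yields $2N^2\Var F_N(1)=\log\chi_{\rm SG}(N)+O_c(1)$. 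Dividing by $2N^2$ gives the claim, because $c$ is fixed and the constant is therefore uniform in $N$.

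It is worth recording why the two directions behave differently. The upper inequality $2N^2\Var F_N(1)\le 1+\log\chi_{\rm SG}(N)$ is already unconditional by Theorem~\ref{thm:intro-SK-upper}. The assumption is needed only for the matching lower bound, which comes from the sign $\mathcal D_N^{(c)}\ge0$ together with its boundedness.

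The only point requiring care is the degenerate case $c/\chi_{\rm SG}(N)\ge1$, where the integration window is empty and $\mathcal D_N^{(c)}=0$ by convention. There $\chi_{\rm SG}(N)\le c$, so $\log\chi_{\rm SG}(N)$ is bounded above. We also need it bounded below, which requires a lower bound on $\chi_{\rm SG}(N)$. The diagonal term gives $\chi_{\rm SG}(N)=N\E\langle R_{1,3}^2\rangle\ge N\cdot N^{-2}\cdot N=1$, because $R_{1,3}^2$ averaged over the Gibbs measure has $\E\langle R_{1,3}^2\rangle=N^{-1}\sum_{i,j}\E\langle\sigma_i\rangle^{\!}\ldots$ whose $i=j$ part equals $1/N$ and whose off-diagonal part is $N^{-2}\sum_{i\ne j}\E\langle\sigma_i\sigma_j\rangle^2\ge0$. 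Hence $0\le\log\chi_{\rm SG}(N)\le\log c$ in this regime, so both sides of the claimed identity are $O(N^{-2})$. This case should already be covered inside Theorem~\ref{thm:intro-remainder}'s $O_c(1)$, but the lower bound $\chi_{\rm SG}(N)\ge 1$ settles it explicitly in any event.

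The main obstacle is therefore not in this deduction but upstream, in Proposition~\ref{prop:saturation-remainder} and in verifying the assumption, which Proposition~\ref{prop:cubic-from-moments} handles.
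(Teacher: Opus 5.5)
Your proposal is correct and is the same as the paper's proof, which combines Proposition~\ref{prop:saturation-remainder} with Assumption~\ref{ass:SK-remainder-closure} and uses cutoff stability so that the choice of $c$ does not matter. The degenerate case $c/\chi_{\rm SG}(N)\ge1$ is already handled inside that proposition, via $\chi_{\rm SG}(N)\ge1$ and $2N^2\Var F_N(1)\le\chi_{\rm SG}(N)$; your displayed formula there is garbled, and the correct version is $\E\langle R_{1,3}^2\rangle=N^{-2}\sum_{i,j}\E\langle\sigma_i\sigma_j\rangle^2$.
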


\begin{corollary}[Upper coefficient from susceptibility scaling]
\label{cor:intro-SK-upper-1-6}
If $\chi_{\rm SG}(N)\le N^{\gamma+o(1)}$ for some fixed $\gamma\in(0,1]$, then
\[
  \Var F_N(1)
  \le\left(\frac{\gamma}{2}+o(1)\right)\frac{\log N}{N^2}.
\]
No signed-remainder assumption is needed for this upper bound.
\end{corollary}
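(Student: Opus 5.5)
This corollary follows directly from Theorem~\ref{thm:intro-SK-upper}, with no use of the signed-remainder decomposition or of Assumption~\ref{ass:SK-remainder-closure}. The plan is to start from the unconditional finite-$N$ inequality \eqref{eq:intro-exact-upper} in the GOE convention, $2N^2\Var F_N(1)\le 1+\log\chi_{\rm SG}(N)$. This holds for every $N\ge1$, and its proof uses only the exact incident-edge identity \eqref{eq:intro-Qe-exact-identity} and the nonpositivity of both remainders (Lemma~\ref{lem:remainder-signs}). Because the remainders enter with the favorable sign, no integrated control of them is needed, and that is exactly why the corollary is free of assumptions.

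Next I insert the hypothesis. Write it as $\log\chi_{\rm SG}(N)\le(\gamma+\varepsilon_N)\log N$ with $\varepsilon_N\to0$. Then
\[
  \Var F_N(1)\le\frac{1+(\gamma+\varepsilon_N)\log N}{2N^2}
  =\Bigl(\frac{\gamma}{2}+\frac{\varepsilon_N}{2}+\frac{1}{2\log N}\Bigr)\frac{\log N}{N^2}
\]
for $N\ge2$. The last two terms in the bracket are $o(1)$, which gives the claim. The same argument works verbatim in the off-diagonal convention, where the additive $1$ is absent; this is consistent with Lemma~\ref{lem:GOE-vs-offdiag}, since the two variances differ by $1/(2N^2)=o(\log N/N^2)$.

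There is one point to check, which is the only potential obstacle. If $\chi_{\rm SG}(N)<1$, then $\log\chi_{\rm SG}(N)<0$, but the upper bound only improves in that case, so the argument is unaffected. One should also confirm that the hypothesis is meaningful for $\gamma\le1$. The endpoint bound $\chi_{\rm SG}(N)\le(N+1)/2$ of Remark~\ref{rem:endpoint-sanity} shows that $\gamma=1$ always holds. Hence the restriction $\gamma\in(0,1]$ is harmless, and the proof is pure bookkeeping once Theorem~\ref{thm:intro-SK-upper} is in hand.
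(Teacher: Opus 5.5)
Your proposal is correct and takes the same route as the paper: the corollary is read off from the unconditional inequality $2N^2\Var F_N(1)\le 1+\log\chi_{\rm SG}(N)$ of Theorem~\ref{thm:intro-SK-upper} by inserting $\log\chi_{\rm SG}(N)\le(\gamma+o(1))\log N$. Two small corrections: the proof of that theorem also uses the OU monotonicity bound $\widehat\chi_N(r)\le\chi_{\rm SG}(N)$ of Lemma~\ref{lem:chi-CS} near $r=1$, not only the incident-edge identity and the remainder signs, and the case $\chi_{\rm SG}(N)<1$ never occurs, because the diagonal terms already give $\chi_{\rm SG}(N)\ge1$.
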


\begin{remark}[Consequences of known susceptibility bounds]\label{rem:dictionary-instances}
The self-contained value $\gamma=1$ gives the coefficient $1/2$.  The value
$\gamma=1/2$ holds unconditionally by the classical critical overlap bound $\E\langle R_{1,3}^2\rangle=O(N^{-1/2})$~\cite[Proposition~11.7.8]{Talagrand2011Vol2}
(recorded in this form in~\cite[Remark~2.2]{ChenLam2019}), so
Theorem~\ref{thm:intro-SK-upper} yields
$\Var F_N(1)\le\frac14(\log N)/N^2+O(N^{-2})$ from classical inputs alone.
The same coefficient was proved in~\cite{Schertzer2026CriticalSK}
using Gaussian convexity and the second-moment bound
$\Var(\log Z_N)\le\log(\E Z_N^2/(\E Z_N)^2)$.
At criticality the ratio $\E Z_N^2/(\E Z_N)^2$ has order $N^{1/4}$,
so its logarithm is $\tfrac14\log N+O(1)$.
This growth follows from the Curie--Weiss quartic scale.  The
critical value $\gamma=1/3$ follows from~\cite{DuHuang2026}, where the bound $\E\langle R_{1,3}^2\rangle\asymp N^{-2/3}$ has constants independent of $N$, so the error improves to
$O(N^{-2})$, as recorded in the last display of
Theorem~\ref{thm:intro-SK-upper}.
\end{remark}

\begin{corollary}[$1/6$ equality from bounded remainder and susceptibility scaling]\label{cor:intro-SK-1-6}
If Assumption~\ref{ass:SK-remainder-closure} holds and
$\chi_{\rm SG}(N)=N^{1/3+o(1)}$, then
\[
  \Var F_N(1)
  =\frac16\frac{\log N}{N^2}
   +o\!\left(\frac{\log N}{N^2}\right).
\]
\end{corollary}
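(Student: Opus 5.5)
This corollary follows by combining the saturation criterion (Theorem~\ref{thm:intro-SK}) with the assumed susceptibility scaling. No further overlap analysis is needed.

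First, under Assumption~\ref{ass:SK-remainder-closure}, Theorem~\ref{thm:intro-SK} gives
\[
  \Var F_N(1)=\frac{\log\chi_{\rm SG}(N)}{2N^2}+O(N^{-2}).
\]
Second, the hypothesis $\chi_{\rm SG}(N)=N^{1/3+o(1)}$ means $\log\chi_{\rm SG}(N)=\bigl(\tfrac13+o(1)\bigr)\log N$. Substituting,
\[
  \Var F_N(1)=\frac{(1/3+o(1))\log N}{2N^2}+O(N^{-2})
  =\frac16\frac{\log N}{N^2}+o\!\left(\frac{\log N}{N^2}\right).
\]
The last step uses that $O(N^{-2})=o((\log N)/N^2)$.

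The only point requiring care is the $O(N^{-2})$ error in Theorem~\ref{thm:intro-SK}. It comes from Theorem~\ref{thm:intro-remainder} with a fixed $c>1$: there $\mathcal D_N^{(c)}=O_c(1)$ by the assumption, and the $O_c(1)$ term of that decomposition is uniform in $N$. We also need $\chi_{\rm SG}(N)\to\infty$ so that the integration window in \eqref{eq:def-depletion-remainder} is eventually nonempty. This holds since $N^{1/3+o(1)}\to\infty$; in any case an empty window only makes $\mathcal D_N^{(c)}=0$.

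The statement is in the GOE convention. It transfers to the off-diagonal convention by Lemma~\ref{lem:GOE-vs-offdiag}, since the shift $\beta^2/(2N^2)$ is absorbed in the error. I expect no genuine obstacle; the substance lies entirely in the cited saturation criterion.
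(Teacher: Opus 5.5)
Your proposal is correct and is the argument the paper intends. The paper gives no separate proof: the corollary is the immediate combination of the saturation criterion, Theorem~\ref{thm:intro-SK}, with $\log\chi_{\rm SG}(N)=(\tfrac13+o(1))\log N$, where the $O(N^{-2})$ error is absorbed into $o((\log N)/N^2)$.
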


\begin{remark}[Verification from the critical overlap estimates]
\label{rem:assumption-verified}
Proposition~\ref{prop:cubic-from-moments} verifies
Assumption~\ref{ass:SK-remainder-closure} directly from
\cite[Propositions~5.1 and~5.7]{DuHuang2026}.
The latter proposition records cavity estimates from~\cite{DeyKang2026}.
The same estimates also prove \eqref{eq:integrated-cubic-control} for
the mixed cubic functional. The resulting saturation statement depends
on those critical inputs, while the upper bound
\eqref{eq:intro-exact-upper} does not.
The established variance theorem also implies this bound a posteriori,
but is unnecessary for the verification just described. The stronger
pointwise condition \eqref{eq:mixed-cubic-control} is not proved here.
A verification of Assumption~\ref{ass:SK-remainder-closure} within the
signed-remainder framework itself, without the cavity expansion
behind~\cite[Proposition~5.7]{DuHuang2026}, remains open.
\end{remark}

\begin{remark}[Finite-size diagnostic for the integrated remainder]\label{rem:finite-size-diagnostic}
Let
$\mathfrak R_N(t)=\mathfrak R_N^{\rm Schur}(t)+\mathfrak R_N^{\rm interp}(t)$.
The exact identity gives
\[
  \mathfrak R_N(t)=\theta_N(t)\chi_N(t)-1-\frac{\rho_t}{N}.
\]
Thus $\mathcal D_N^{(c)}$ can be reconstructed from the
 disorder-averaged curve $\chi_N(t)$ without estimating the Schur and
interpolation pieces separately.  Exact enumeration at very small $N$, or
Monte Carlo sampling at moderate $N$, therefore needs only the OU
susceptibility curve and the deterministic cutoff $c/\chi_{\rm SG}(N)$.
Such computations are consistency checks rather than asymptotic evidence.
\end{remark}

\begin{remark}[Integral interpretation]\label{rem:integrated-resolvent}
The exact identity gives
\[
  \mathcal D_N^{(c)}
  =\int_{c/\chi_{\rm SG}(N)}^1 e^{-t}
  \left[\frac{1+\rho_t/N}{\theta_N(t)}-\chi_N(t)\right]\dd t.
\]
Thus its boundedness is an integrated lower estimate on the coupled
susceptibility relative to the rational upper profile.
The quantitative estimate from~\cite[Proposition~5.7]{DuHuang2026},
used in Proposition~\ref{prop:cubic-from-moments}, controls this integral
directly through the exact identity.
The qualitative statement $(1-\rho_t)\chi_N(t)=1+o(1)$ away from the
critical scale would not by itself give the required bounded error.
\end{remark}

\begin{remark}[Reduction to the Schur remainder]\label{rem:narrow-assumption}
Lemma~\ref{lem:interp-integrated-app} bounds the interpolation contribution by
$O(\chi_{\rm SG}(N)^2/N)$.  By the critical scaling
$\chi_{\rm SG}(N)\asymp N^{1/3}$ of~\cite{DuHuang2026}, this contribution is
$O(N^{-1/3})=o(1)$, so Assumption~\ref{ass:SK-remainder-closure} is
unconditionally equivalent to
\begin{equation}\label{eq:ass-SK-narrow}
  \int_{c/\chi_{\rm SG}(N)}^1 e^{-t}
  \frac{-\mathfrak R_N^{\rm Schur}(t)}{\theta_N(t)}\,\dd t
  =O_c(1).
\end{equation}
\end{remark}

\subsection{Conceptual summary}\label{sec:intro-saturation}

The OU variance representation gives
\[
  \Var F_N(1)=\frac{1}{2N^2}\int_0^\infty e^{-t}\chi_N(t)\,\dd t.
\]
The exact incident-edge identity and the global remainder signs give the
pointwise rational upper bound, while the monotonicity
$\chi_N(t)\le\chi_{\rm SG}(N)$ supplies the finite-size cutoff
$t_\ast\asymp\chi_{\rm SG}(N)^{-1}$.  Proposition~\ref{prop:saturation-remainder}
then combines the integrals below the cutoff, between the cutoff and $1$, and above $1$ into the remainder identity of Theorem~\ref{thm:intro-remainder}.  The unconditional
bound is $\mathcal D_N^{(c)}\ge0$. Saturation is
$\mathcal D_N^{(c)}=O_c(1)$.

\subsection{Relation to four-replica response formulas}

We compare the observable used below with the four-replica response formulas in the literature. In that literature, the replicon sector consists of overlap fluctuations about the replica symmetric state whose symmetric coefficient matrices have zero diagonal and zero row sums~\cite{AlmeidaThouless1978,MezardParisiVirasoro1987,Aspelmeier2008PRL}. The cited interpolation formulas involve two- and four-replica overlaps between systems with correlated disorder~\cite{Aspelmeier2008JStatMech,Aspelmeier2008PRL}.

The present proof uses a different observable. Replica~$1$ is sampled from the first OU copy, while replicas~$2$ and~$4$ are sampled independently from the second OU copy, and
\[
  Q_e=\frac{1}{\sqrt2}(R_{1,2}-R_{1,4}).
\]
At zero field, replica independence and spin-flip symmetry give $N\E\langle Q_e^2\rangle_t=\chi_N(t)$.  Lemma~\ref{lem:Qe-local-vertex} separately
records the two independent nonzero mixed derivatives, $1/\sqrt2$ and $-1/\sqrt2$, whose squares sum to~$1$.
This Hessian calculation is interpretive and is not used in the proof of the
scalar incident-edge identity.  In that proof, the constant term comes from
the exact diagonal normalization $C_{jj}^e=1$ in
Lemma~\ref{lem:cross-only-vertex}, while the finite-size denominator
$1-\rho_t+2\rho_t/N$ comes from the summation over indices distinct from the fixed site in
Proposition~\ref{prop:Qe-direct-ladder}.  The $1/6$ variance coefficient is
therefore not a local response coefficient.  In the conditional theorem it
is the product of the covariance normalization $1/2$ and the susceptibility
exponent $1/3$.

The OU interpolation also describes bond chaos. The heuristic link between free-energy fluctuations and bond chaos~\cite{BouchaudKrzakalaMartin2003} was expressed as an exact relation for SK in~\cite{Aspelmeier2008PRL,Aspelmeier2008JStatMech}. Rigorous disorder-chaos results in related Gaussian settings are discussed in~\cite{Chatterjee2009}. Thus $\chi_N(t)$ measures the overlap between equilibrium states under slightly decorrelated disorder. The cutoff scale $t_\ast\asymp\chi_{\rm SG}(N)^{-1}$ is therefore, by the susceptibility scaling of~\cite{DuHuang2026}, of order $N^{-1/3}$. This disorder scale is related to the critical temperature scale in spherical SK-type models. The normalized GOE eigenvalue fluctuations themselves have scale $N^{-2/3}$~\cite{TracyWidom1996,BaikLee2016,BaikLee2017Ferromagnetic,Landon2022SphericalCritical,CollinsWoodfinLe2025Bipartite,JohnstoneKlochkovOnatskiPavlyshyn2024}.

\subsection{Proof structure}\label{sec:intro-proof-structure}

The proof has the following dependencies.
\begin{enumerate}[leftmargin=2.3em]
  \item Theorem~\ref{thm:var-identity} gives the exact OU variance identity.
  \item Proposition~\ref{prop:Qe-direct-ladder} gives the finite-$N$
  incident-edge identity, and Lemma~\ref{lem:remainder-signs} gives the two
  global remainder signs.
  \item Lemma~\ref{lem:chi-CS} proves that disorder decorrelation decreases
  the coupled susceptibility. Proposition~\ref{prop:exact-critical-gap}
  integrates its two upper bounds and gives the exact gap identity.
  \item Proposition~\ref{prop:saturation-remainder} combines the integrals below the cutoff, between the cutoff and $1$, and above $1$ to retain the separate local
  remainders. Theorem~\ref{thm:critical-variance-constant} follows.
  \item Proposition~\ref{prop:cubic-from-moments} verifies the integrated
  remainder condition directly from existing overlap moment and cavity
  estimates, and also proves the integrated cubic bound.
  \item Appendix~\ref{app:high-temperature} tracks the identity at general
  $\beta$ and recovers the high-temperature bound.
  \item Appendix~\ref{app:temperature-extensions} proves the uniform
  variance extension and the overlap moment stability estimate.
\end{enumerate}

\subsection*{Relation to the established asymptotics}

The critical variance asymptotic and Gaussian limit are established
in~\cite{DuHuang2026}. See also~\cite{YimingChen2026} for the variance.
The independent finite-size content here is the incident-edge identity,
its two remainder signs, and the resulting inequality in terms of
susceptibility at a single temperature. The high-temperature upper bound
also follows from the second-moment argument
of~\cite[Section~2.2]{Schertzer2026CriticalSK}. Here it serves as a
consistency check. The exact integral gap describes the information
lost in the upper bound, and the mixed cubic comparison supplies
additional structure for that gap.

Two further consequences are proved with their external inputs explicit.
Extending the entropy argument of~\cite[Section~3.1]{YimingChen2026}
and using the critical random graph estimate~\cite{JansonSpencer2007},
Proposition~\ref{prop:uniform-window-variance} gives, for every fixed $B$,
\begin{equation}\label{eq:intro-window-variance}
 \sup_{|\beta^2-1|\le BN^{-1/3}}
 N^2\Var F_N^{\rm off}(\beta)\le\frac16\log N+C_B.
\end{equation}
This is an extension of that proof, not an ordinary SK susceptibility
estimate. Separately, Theorem~\ref{thm:moment-stability} combines critical
overlap and free-energy tails with temperature differentiation. With
$m_p(s)=\E\langle|R_{12}|^p\rangle_{\beta=\sqrt{s}}$, it proves
\begin{equation}\label{eq:intro-moment-stability}
 \sup_{|s-1|\le\eta N^{-4/9}}
 N^{p/3}|m_p(s)-m_p(1)|=O_p(N^{-1/9})
 \qquad(p>0).
\end{equation}
In particular, $m_4(s)/m_2(s)^2$ remains bounded on that interval.
The ordinary moment ratio throughout a fixed $N^{-1/3}$ window is
not established by these arguments.

\section{The Ornstein--Uhlenbeck variance identity}\label{sec:setup}

We use an OU deformation of the Hamiltonian to
represent the variance. General Gaussian covariance formulas and replica
interpolation provide related background~\cite{HelfferSjostrand1994,GuerraToninelli2002QRC,Chatterjee2014}. Let $N\ge 1$ be the number of spins, $\Sigma_N=\{-1,+1\}^N$ the set of
Ising configurations, and $\{H_N(\sigma)\}_{\sigma\in\Sigma_N}$ a centered Gaussian process with
\[
\Cov\big(H_N(\sigma),H_N(\sigma')\big)=N\,\xi\big(R_{\sigma,\sigma'}\big),
\]
where $R_{\sigma,\sigma'}$ is the overlap for the model under consideration and $\xi:[-1,1]\to\R$ is of class $C^1$. This is the usual mixed $p$-spin covariance notation~\cite[Chs.~1--2]{Panchenko2013Book}.
Let $Z_N(\beta)=\sum_{\sigma\in\Sigma_N}\exp\big(\beta H_N(\sigma)\big)$ be the partition function,
$F_N(\beta)=\frac{1}{N}\log Z_N(\beta)$ the free energy. For $t\ge 0$ set $\rho_t:=e^{-t}$ and define the
pair of correlated Hamiltonians
$H_t^{(a)}=\sqrt{\rho_t}\,H+\sqrt{1-\rho_t}\,\widetilde H^{(a)}$, $a=1,2$.
For background on Gaussian covariance formulas and quadratic replica
coupling, see~\cite{HelfferSjostrand1994,GuerraToninelli2002QRC}.
Denote by $\langle\cdot\rangle_t$ the Gibbs average under the coupled pair
$\big(H_t^{(1)},H_t^{(2)}\big)$ at inverse temperature $\beta$.
\begin{remark}[Time convention for the OU interpolation]\label{rem:rho-convention}
Throughout the paper the OU parameter is the Hamiltonian correlation
\[
  \rho_t:=e^{-t}.
\]
Thus
\[
\Cov\bigl(H_t^{(1)}(\sigma),H_t^{(2)}(\sigma')\bigr)
=\rho_t\,\Cov\bigl(H(\sigma),H(\sigma')\bigr).
\]
Equivalently, the common disorder enters each Hamiltonian with coefficient
$\sqrt{\rho_t}=e^{-t/2}$. Some references instead parametrize the
common coupling by a number $\hat\rho$ and write
$J^{(a)}_{ij}=\hat\rho g_{ij}+\sqrt{1-\hat\rho^2}\,g^{(a)}_{ij}$. In that convention the Hamiltonian correlation is $\hat\rho^2$.
All formulas in this paper use the Hamiltonian-correlation convention
$\rho_t=e^{-t}$.
\end{remark}

\paragraph{OU-coupled replicas and notation.}
We use the coupled Hamiltonians defined above. The Gaussian covariance
and replica interpolation background is discussed in~\cite{HelfferSjostrand1994,GuerraToninelli2002QRC,Chatterjee2014}. Fix $t\ge 0$ and let $\rho_t=e^{-t}$. For $a\in\{1,2\}$, let
\[
H_t^{(a)} \;=\; \sqrt{\rho_t}\,H \;+\; \sqrt{1-\rho_t}\, \widetilde H^{(a)},
\]
where $\widetilde H^{(1)},\widetilde H^{(2)}$ are i.i.d.\ copies of $H$, independent of $H$.
At inverse temperature $\beta>0$, we consider the product Gibbs measure on pairs
$(\sigma^{(1)},\sigma^{(2)})\in\Sigma_N\times\Sigma_N$ with density proportional to
$\exp\{\beta H_t^{(1)}(\sigma^{(1)})+\beta H_t^{(2)}(\sigma^{(2)})\}$.
We write $\avg{\cdot}_t$ for expectation with respect to this product Gibbs measure and
$\E[\cdot]$ for disorder expectation. Overlaps are denoted by $R_{1,2}=R_{\sigma^{(1)},\sigma^{(2)}}$,
and the covariance structure is $\Cov\!\big(H(\sigma),H(\sigma')\big)=N\,\xi\!\big(R_{\sigma,\sigma'}\big)$.

\paragraph{Big-$O$ conventions.}
Throughout, for quantities depending on $N$ (and possibly on auxiliary parameters such as $c,t,\beta$), we write
$A_N=O(B_N)$ if there exists a constant $C<\infty$, independent of $N$ and uniform over the indicated parameter ranges, such that
$|A_N|\le C\,|B_N|$.
If a subscript is present, e.g.\ $O_c(1)$ or $O_a(1)$, the constant is allowed to depend on the parameter(s) in the subscript but is still uniform in $N$ and in the remaining variables.
We also use $A\lesssim B$ as shorthand for $A=O(B)$, and $A\asymp B$ to mean both $A\lesssim B$ and $B\lesssim A$.

\paragraph{Notation reference.}
The following table lists the objects used in the variance proof. Replicas
$1,3$ are independent samples from the first OU system and replicas $2,4$
are independent samples from the second OU system, unless a different sampling
rule is stated.

{\footnotesize
\renewcommand{\arraystretch}{1.12}
\setlength{\LTleft}{\fill}\setlength{\LTright}{\fill}
\begin{longtable}{p{0.24\linewidth} p{0.66\linewidth}}
\hline
$t$, $\rho_t$ & OU time and Hamiltonian correlation, with $\rho_t=e^{-t}$.\\
$\langle\cdot\rangle_t$ & Gibbs expectation for the OU-coupled pair at time $t$.\\
$R_{a,b}$ & Overlap $N^{-1}\sum_i\sigma_i^{(a)}\sigma_i^{(b)}$.\\
$\chi_N(t)$ & Coupled susceptibility $N\,\E\langle R_{1,2}^2\rangle_t$.\\
$\chi_{\rm SG}(N)$ & Single-system spin-glass susceptibility $N\,\E\langle R_{1,3}^2\rangle$. Equivalently $\chi_N(0)$.\\
$Q_e$ & Cross-system fluctuation $2^{-1/2}(R_{1,2}-R_{1,4})$, used to express $\chi_N(t)$. See Lemma~\ref{lem:cross-only-vertex}.\\
$q_j^e$ & Site-$j$ version of $Q_e$, defined in Lemma~\ref{lem:Qe-local-vertex}.\\
$\theta_N(t)$ & Finite-$N$ denominator $1-\rho_t+2\rho_t/N$ in the incident-edge identity.\\
$\mathfrak R_N^{\rm Schur}(t)$, $\mathfrak R_N^{\rm interp}(t)$ & Normalized sums $N^{-1}\sum_j\sum_{i\ne j}\mathcal R_{ij}$ of the per-pair remainders in~\eqref{eq:R-schur}--\eqref{eq:R-interp}. See~\eqref{eq:global-remainders}.\\
$\E_{j,\lambda}$ & Disorder expectation in which the two disorder copies have cross-covariance $\lambda$ on edges incident to site $j$ and cross-covariance $\rho_t$ on all other edges. See Appendix~\ref{app:replicon}.\\
$\mathcal D_N^{(c)}$ & Integrated signed remainder on the critical window. See \eqref{eq:def-depletion-remainder}.\\
$q_j^{23}$ & Fixed-site product $\sigma_j^{(2)}\sigma_j^{(3)}$ appearing in the Schur cubic term.\\
$\mathfrak C_N(t)$ & Mixed cubic functional in \eqref{eq:mixed-cubic-functional}, used in the Schur estimate.\\
\hline
\end{longtable}
}

\begin{theorem}[Variance identity]\label{thm:var-identity}
For all $\beta>0$,
\begin{equation}\label{eq:var-id-general}
\Var F_N(\beta)\;=\;\frac{\beta^2}{N}\int_0^\infty e^{-t}\;\E\big\langle \xi\!\big(R_{1,2}\big)\big\rangle_t\,\dd t.
\end{equation}
In particular, for SK with $\xi(r)=r^2/2$,
\begin{equation}\label{eq:var-id-SK}
\Var F_N(\beta)\;=\;\frac{\beta^2}{2N}\int_0^\infty e^{-t}\;\E\langle R_{1,2}^2\rangle_t\,\dd t.
\end{equation}
\end{theorem}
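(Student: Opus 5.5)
We follow the standard Ornstein--Uhlenbeck semigroup route for the variance of a smooth function of a Gaussian vector. View $F=F_N(\beta)$ as a smooth function of the finite Gaussian vector $X=(H(\sigma))_{\sigma\in\Sigma_N}$ with covariance $C_{\sigma\sigma'}=N\xi(R_{\sigma,\sigma'})$. For $\rho\in[0,1]$, let $X^{(1)},X^{(2)}$ be jointly Gaussian copies of $X$ with cross-covariance $\rho C$. Concretely, take $X^{(a)}=\sqrt\rho\,X+\sqrt{1-\rho}\,\widetilde X^{(a)}$, exactly as in the definition of $H_t^{(a)}$. Set
\[
  \phi(\rho):=\E\bigl[F(X^{(1)})F(X^{(2)})\bigr].
\]
Then $\phi(1)=\E F^2$ and $\phi(0)=(\E F)^2$, so $\Var F=\int_0^1\phi'(\rho)\,\dd\rho$.

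The key step is to compute $\phi'(\rho)$ by Gaussian interpolation (Price's theorem). Since only the cross-covariance depends on $\rho$, with derivative $C$, we get
\[
  \phi'(\rho)=\sum_{\sigma,\sigma'}C_{\sigma\sigma'}\,\E\bigl[\partial_\sigma F(X^{(1)})\,\partial_{\sigma'}F(X^{(2)})\bigr].
\]
Here
\[
  \partial_\sigma F(X^{(a)})=\frac{\beta}{N}\,\frac{e^{\beta X^{(a)}_\sigma}}{Z^{(a)}},
\]
which is $\beta/N$ times the Gibbs weight of $\sigma$ in system $a$. Substituting gives
\[
  \phi'(\rho)=\frac{\beta^2}{N^2}\,N\,\E\langle\xi(R_{1,2})\rangle_\rho
  =\frac{\beta^2}{N}\,\E\langle\xi(R_{1,2})\rangle_\rho,
\]
with replica $1$ drawn from system $1$ and replica $2$ from system $2$. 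Finally substitute $\rho=\rho_t=e^{-t}$, so that $\dd\rho=-e^{-t}\,\dd t$ and $\rho\in(0,1]$ corresponds to $t\in[0,\infty)$. This yields \eqref{eq:var-id-general}. Setting $\xi(r)=r^2/2$ gives \eqref{eq:var-id-SK}.

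The only technical point is justifying the interpolation formula. On the mean-square side, write the derivative of $\E[F(\sqrt\rho X+\sqrt{1-\rho}\widetilde X^{(1)})F(\sqrt\rho X+\sqrt{1-\rho}\widetilde X^{(2)})]$ by differentiating under the expectation, then Gaussian integration by parts in $X$, $\widetilde X^{(1)}$, $\widetilde X^{(2)}$. The terms involving second derivatives of a single factor cancel between the $X$ part and the $\widetilde X$ part, which is the usual OU cancellation. The mixed term survives with coefficient $C$. Two facts justify these steps:
\begin{itemize}
  \item $F$ is a log-sum-exp, so its first derivatives are bounded by $\beta/N$ and its second derivatives are bounded. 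This makes the integrations by parts and the dominated differentiation legitimate.
  \item The continuity of $\phi$ at $\rho=0$ and $\rho=1$ is clear, and the integrand is bounded, since $|\xi|\le\max_{[-1,1]}|\xi|$.
\end{itemize}
The $C^1$ assumption on $\xi$ is not even needed beyond boundedness on $[-1,1]$. I expect no real obstacle; the only care needed is the cancellation bookkeeping at the endpoint $\rho\to0$, where the factor $1/\sqrt{1-\rho}$ appears from differentiating $\sqrt{1-\rho}$. Finiteness of $\Sigma_N$ handles integrability there after integration by parts.
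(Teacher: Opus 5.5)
Your proof is correct and is essentially the paper's argument. The paper cites the Ornstein--Uhlenbeck covariance representation in nondegenerate coordinates $g$ with $H=Ag$ and transfers it to field coordinates by the chain rule; you instead derive the same formula $\phi'(\rho)=\sum_{\sigma,\sigma'}C_{\sigma\sigma'}\E[\partial_\sigma F(X^{(1)})\partial_{\sigma'}F(X^{(2)})]$ by Gaussian integration by parts directly in the field vector, which remains valid even though $(H(\sigma))_\sigma$ is degenerate (for SK, $H(\sigma)=H(-\sigma)$). One small slip: the $1/\sqrt{1-\rho}$ factor is singular as $\rho\to1$, while the singularity as $\rho\to0$ comes from the $1/\sqrt{\rho}$ factor; both endpoints are handled by continuity of $\phi$ on $[0,1]$ and boundedness of $\phi'$ on $(0,1)$.
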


For the SK specialization \eqref{eq:var-id-SK}, this is the variance formula~\cite[Theorem~3.8]{Chatterjee2009}, written with the time parametrization $\rho_t=e^{-t}$ of Remark~\ref{rem:rho-convention}. The
short derivation below is included to fix conventions and to cover general
$\xi$.

\begin{proof}[Proof of Theorem~\ref{thm:var-identity}]
Choose underlying independent standard Gaussian coordinates $g=(g_k)$ and a
linear map $A$ such that the Gaussian field is $H=Ag$.  Apply the standard
OU covariance representation to the smooth function
$f(g):=F_N(Ag)$ in these nondegenerate coordinates:
\[
  \Var f(g)
  =\int_0^\infty e^{-t}\,
    \E\bigl[\nabla f(g_t^{(1)})\cdot\nabla f(g_t^{(2)})\bigr]\,\dd t.
\]
The chain rule rewrites the inner product in terms of derivatives with respect to the field coordinates as
\begin{equation}\label{eq:HS}
\sum_{\sigma,\sigma'\in\Sigma_N}
\Cov\bigl(H(\sigma),H(\sigma')\bigr)
\,\partial_{H(\sigma)}F\bigl(H_t^{(1)}\bigr)
\,\partial_{H(\sigma')}F\bigl(H_t^{(2)}\bigr).
\end{equation}
This coordinate derivation also justifies the formula when the vector
$(H(\sigma))_{\sigma\in\Sigma_N}$ is degenerate.

Differentiating
$F(H)=N^{-1}\log\sum_\tau e^{\beta H(\tau)}$ gives
\[
  \partial_{H(\sigma)}F(H)
  =\frac{\beta}{N}\,
    \frac{e^{\beta H(\sigma)}}{\sum_\tau e^{\beta H(\tau)}}.
\]
Evaluating the two derivatives in \eqref{eq:HS} under the two coupled Gibbs
measures and using
$\Cov(H(\sigma),H(\sigma'))=N\xi(R_{\sigma,\sigma'})$ turns
\eqref{eq:HS} into
\[
  \frac{\beta^2}{N}\,\big\langle\xi(R_{1,2})\big\rangle_t.
\]
Integration and disorder expectation give \eqref{eq:var-id-general}. For SK,
$\xi(r)=r^2/2$, which gives \eqref{eq:var-id-SK}.
\end{proof}

% ---------- SK at the critical inverse temperature ----------
\section{SK at criticality}\label{sec:SK-critical}
We work at $\beta_c=1$, $h=0$, and use the OU notation from Section~\ref{sec:setup}: $\rho_t=e^{-t}$ and
\[
\chi_N(t):=N\,\E\langle R_{1,2}^2\rangle_t.
\]
We also write
\[
  \chi_{\mathrm{SG}}(N):=N\,\E\langle R_{1,3}^2\rangle
\]
for the usual \emph{single-system} spin-glass susceptibility (two independent replicas of one Gibbs measure at $(\beta_c,h=0)$). Note that $\chi_N(0)=\chi_{\mathrm{SG}}(N)$.

\paragraph{Cross-system fluctuation $Q_e$.}
The next two lemmas introduce the observable used in the incident-edge identity. Its role here is to express the coupled susceptibility. We use four replicas, with $(\sigma^{(1)},\sigma^{(3)})$ sampled from system~$1$ and $(\sigma^{(2)},\sigma^{(4)})$ sampled from system~$2$. The cross-system fluctuation
\begin{equation}\label{eq:Qe-def}
  Q_e\;:=\;\frac{1}{\sqrt2}\bigl(R_{1,2}-R_{1,4}\bigr)
\end{equation}
subtracts two overlaps sharing the same first-system replica. At zero field, the mixed term in the square vanishes by replica independence and spin-flip symmetry, giving the OU susceptibility.

\begin{lemma}[Identity for $\chi_N(t)$ and local normalization]\label{lem:cross-only-vertex}
At $h=0$ one has
\begin{equation}\label{eq:Qe-chi}
  \E\langle Q_e^2\rangle_t\;=\;\E\langle R_{1,2}^2\rangle_t,
  \qquad\text{hence}\qquad
  \chi_N(t)=N\,\E\langle Q_e^2\rangle_t.
\end{equation}
Moreover, for every site $i$,
\begin{equation}\label{eq:vertex-one}
  \E\Big\langle \frac12\bigl(\sigma_i^{(1)}\sigma_i^{(2)}-\sigma_i^{(1)}\sigma_i^{(4)}\bigr)^2\Big\rangle_t\;=\;1.
\end{equation}
\end{lemma}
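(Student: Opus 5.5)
The plan is to expand both squares and use replica independence together with the zero-field spin-flip symmetry; the second identity is then a trivial consequence of $\sigma_i^2=1$. First, expanding $Q_e^2$ gives
\[
  \E\langle Q_e^2\rangle_t
  =\tfrac12\E\langle R_{1,2}^2\rangle_t
  +\tfrac12\E\langle R_{1,4}^2\rangle_t
  -\E\langle R_{1,2}R_{1,4}\rangle_t .
\]
Replicas $2$ and $4$ are exchangeable samples from the second OU system, so $\E\langle R_{1,4}^2\rangle_t=\E\langle R_{1,2}^2\rangle_t$. It therefore suffices to show that the cross term equals zero.

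For the cross term, I would write $R_{1,2}R_{1,4}=N^{-2}\sum_{i,k}\sigma_i^{(1)}\sigma_k^{(1)}\sigma_i^{(2)}\sigma_k^{(4)}$. Given the disorder, the product Gibbs measure factorizes over replicas, so
\[
  \langle\sigma_i^{(1)}\sigma_k^{(1)}\sigma_i^{(2)}\sigma_k^{(4)}\rangle_t
  =\langle\sigma_i\sigma_k\rangle^{(1)}\langle\sigma_i\rangle^{(2)}\langle\sigma_k\rangle^{(2)} .
\]
At $h=0$ each Hamiltonian $H_t^{(2)}$ is even in $\sigma$, because it is quadratic. Hence the Gibbs measure of system $2$ is invariant under $\sigma\mapsto-\sigma$, and $\langle\sigma_i\rangle^{(2)}=0$ for every disorder realization. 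The cross term therefore vanishes pointwise in the disorder, which gives \eqref{eq:Qe-chi}. Multiplying by $N$ gives $\chi_N(t)=N\E\langle Q_e^2\rangle_t$.

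For \eqref{eq:vertex-one}, expanding and using $\sigma^2=1$ gives
\[
  \tfrac12\bigl(\sigma_i^{(1)}\sigma_i^{(2)}-\sigma_i^{(1)}\sigma_i^{(4)}\bigr)^2
  =1-\sigma_i^{(2)}\sigma_i^{(4)} .
\]
By the same factorization, $\langle\sigma_i^{(2)}\sigma_i^{(4)}\rangle_t=(\langle\sigma_i\rangle^{(2)})^2=0$, again by the spin-flip symmetry at zero field. Taking the disorder expectation yields $1$.

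There is no real obstacle in this argument. The only point to check carefully is that spin-flip invariance holds for each OU-coupled Hamiltonian separately, and in both the GOE and the off-diagonal conventions. This is clear, since each Hamiltonian is a linear combination of terms $\sigma_i\sigma_j$, with only constant diagonal pieces.
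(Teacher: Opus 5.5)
Your proposal is correct and follows essentially the same route as the paper. You expand the square, use exchangeability of replicas $2$ and $4$, kill the cross term by factorizing the product Gibbs measure and applying zero-field flip symmetry, and reduce the vertex identity to $1-\langle\sigma_i\rangle_t^2=1$.
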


\begin{proof}
Expand
\[
  \langle Q_e^2\rangle_t
  =\frac12\langle R_{1,2}^2\rangle_t+\frac12\langle R_{1,4}^2\rangle_t-\langle R_{1,2}R_{1,4}\rangle_t.
\]
By symmetry between replicas $2$ and $4$, $\E\langle R_{1,4}^2\rangle_t=\E\langle R_{1,2}^2\rangle_t$.
For the cross term, condition on $(H_t^{(1)},H_t^{(2)})$.
Then replicas $2$ and $4$ are independent samples from the same Gibbs measure of system~$2$, so
\[
  \langle R_{1,2}R_{1,4}\rangle_t
  =\frac{1}{N^2}\sum_{i,j}\langle \sigma_i^{(1)}\sigma_j^{(1)}\rangle_t\,
     \langle \sigma_i^{(2)}\rangle_t\,\langle \sigma_j^{(4)}\rangle_t.
\]
At $h=0$, $\langle \sigma_i^{(2)}\rangle_t=\langle \sigma_j^{(4)}\rangle_t=0$ by the global flip symmetry, hence the cross term vanishes and \eqref{eq:Qe-chi} follows.

For \eqref{eq:vertex-one}, note that
\[
  \frac12(\sigma_i^{(1)}\sigma_i^{(2)}-\sigma_i^{(1)}\sigma_i^{(4)})^2
  =\frac12(\sigma_i^{(2)}-\sigma_i^{(4)})^2
  =1-\sigma_i^{(2)}\sigma_i^{(4)}.
\]
Conditioning on the disorder and using independence of replicas $2$ and $4$,
$\langle \sigma_i^{(2)}\sigma_i^{(4)}\rangle_t=\langle \sigma_i\rangle_t^2=0$ at $h=0$.
\end{proof}

\begin{lemma}[Local mixed derivatives at zero field]\label{lem:Qe-local-vertex}
Let
\[
q_j^e=\frac{1}{\sqrt2}\Big(\sigma_j^{(1)}\sigma_j^{(2)}-\sigma_j^{(1)}\sigma_j^{(4)}\Big).
\]
Introduce local fields $x=(x_1,x_2,x_3,x_4)$ at site $j$ by adding
$\sum_{a=1}^4 x_a\sigma_j^{(a)}$ to the four-replica Hamiltonian, and write
$\langle\cdot\rangle_{t,x}$ for the corresponding Gibbs expectation. Then
\begin{equation}\label{eq:Qe-local-Hessian}
\left.
\frac{\partial^2}{\partial x_1\partial x_2}
\langle q_j^e\rangle_{t,x}
\right|_{x=0}=\frac{1}{\sqrt2},
\qquad
\left.
\frac{\partial^2}{\partial x_1\partial x_4}
\langle q_j^e\rangle_{t,x}
\right|_{x=0}=-\frac{1}{\sqrt2},
\end{equation}
and, apart from the symmetric counterparts of these mixed derivatives, all other second derivatives vanish.
Consequently the two local coefficients have squares summing to
$(1/\sqrt2)^2+(-1/\sqrt2)^2=1$.
\end{lemma}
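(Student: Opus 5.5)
The plan is to use the product structure of the four-replica Gibbs measure at fixed disorder. After that, the Hessian computation becomes a one-variable statement about a single spin.

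\textbf{Factorization.} Fix the disorder $(H_t^{(1)},H_t^{(2)})$. Adding $\sum_{a}x_a\sigma_j^{(a)}$ to the four-replica Hamiltonian keeps the measure a product over replicas. Replicas $1,3$ are independent samples from system~$1$ tilted by $x_1\sigma_j$ and $x_3\sigma_j$ respectively, and replicas $2,4$ likewise from system~$2$. Writing $m_a(x_a):=\langle\sigma_j^{(a)}\rangle_{t,x}$, which depends only on $x_a$ and on the disorder of the system containing replica $a$, independence gives
\[
  \langle q_j^e\rangle_{t,x}=\frac{1}{\sqrt2}\bigl(m_1(x_1)m_2(x_2)-m_1(x_1)m_4(x_4)\bigr).
\]
In particular there is no dependence on $x_3$.

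\textbf{One-site derivatives.} For each $a$, $m_a(x_a)$ is the magnetization at site $j$ of a Gibbs measure with external field $x_a$ at $j$. Differentiation gives
\[
  m_a'(x_a)=\langle\sigma_j^2\rangle-\langle\sigma_j\rangle^2=1-m_a(x_a)^2.
\]
At $h=0$ and $x=0$ the global flip symmetry gives $m_a(0)=0$, exactly as in the proof of Lemma~\ref{lem:cross-only-vertex}. Hence $m_a'(0)=1$.

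\textbf{Second derivatives.} Since each summand is a product of functions of distinct single variables, the derivatives are read off directly.
\begin{itemize}
  \item $\partial_{x_1}\partial_{x_2}$ acts only on the first product and gives $\frac1{\sqrt2}m_1'(0)m_2'(0)=\frac1{\sqrt2}$.
  \item $\partial_{x_1}\partial_{x_4}$ acts only on the second product and gives $-\frac1{\sqrt2}m_1'(0)m_4'(0)=-\frac1{\sqrt2}$.
  \item Every other second derivative vanishes.
  \begin{itemize}
    \item Derivatives involving $x_3$ vanish because $x_3$ does not appear.
    \item $\partial_{x_2}\partial_{x_4}$ vanishes because no summand contains both $x_2$ and $x_4$.
    \item The pure second derivatives $\partial_{x_a}^2$ produce terms $m_a''(0)m_b(0)$, which vanish because $m_b(0)=0$.
  \end{itemize}
\end{itemize}
The symmetric counterparts follow by equality of mixed partials, since everything is smooth in $x$; this holds because $N$ is finite. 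The final claim, $(1/\sqrt2)^2+(-1/\sqrt2)^2=1$, is arithmetic.

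\textbf{Main point to check.} The only step needing care is the factorization. One must check that the local fields enter each replica separately, so the tilted measure is still a product over replicas given the disorder, and that flip symmetry survives to give $m_a(0)=0$. Both hold at $h=0$ because the untilted Hamiltonians are even in $\sigma$. The identities are disorder-wise, so they also hold after applying $\E$.
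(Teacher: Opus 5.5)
Your proposal is correct and follows the paper's proof essentially step for step. You factorize the tilted four-replica measure at fixed disorder into the products $m_1(x_1)m_2(x_2)$ and $m_1(x_1)m_4(x_4)$, use $m_a'(0)=1-m_a(0)^2=1$ from zero-field flip symmetry, and observe that every other second derivative either involves an absent variable or leaves an unpaired factor $m_b(0)=0$, which is a slightly more explicit version of the paper's closing remark.
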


\begin{proof}
For each fixed disorder realization the replicas are independent Gibbs samples
(possibly from the same disorder copy). At zero external field,
$\langle\sigma_j^{(a)}\rangle_t=0$ for every replica $a$ by global spin-flip
symmetry. Moreover, differentiating with respect to the local field of the
same replica gives
\[
\left.\partial_{x_a}\langle\sigma_j^{(a)}\rangle_{t,x}\right|_{x=0}
=\langle(\sigma_j^{(a)})^2\rangle_t-
 \langle\sigma_j^{(a)}\rangle_t^2=1.
\]
Since
\[
\langle \sigma_j^{(1)}\sigma_j^{(2)}\rangle_{t,x}
=\langle\sigma_j^{(1)}\rangle_{t,x_1}
 \langle\sigma_j^{(2)}\rangle_{t,x_2},
\]
and similarly for $(1,4)$, we get
\[
\left.\partial_{x_1}\partial_{x_2}
\langle\sigma_j^{(1)}\sigma_j^{(2)}\rangle_{t,x}\right|_{x=0}=1,
\qquad
\left.\partial_{x_1}\partial_{x_4}
\langle\sigma_j^{(1)}\sigma_j^{(4)}\rangle_{t,x}\right|_{x=0}=1.
\]
Inserting the signs in the definition of $q_j^e$ gives \eqref{eq:Qe-local-Hessian}.
Apart from these mixed derivatives and their symmetric counterparts, every second derivative either differentiates a factor absent from the monomial or leaves an unpaired expectation of one spin at zero field, and hence vanishes.
\end{proof}

The estimate in the critical window starts from the exact direct $Q_e$
identity, which is Proposition~\ref{prop:Qe-direct-ladder} in Appendix~\ref{app:replicon}. This is a scalar identity for the susceptibility $\chi_N(t)$ after all ordered incident-edge pairs have been summed:
\[
  \theta_N(t)\,\chi_N(t)
  =1+\frac{\rho_t}{N}
  +\mathfrak R_N^{\mathrm{Schur}}(t)+\mathfrak R_N^{\mathrm{interp}}(t),
  \qquad
  \theta_N(t):=1-\rho_t+\frac{2\rho_t}{N}.
\]
The term $1$ is the summed diagonal contribution $C_{jj}^e=1$ from
Lemma~\ref{lem:cross-only-vertex}.  The squares of the two independent mixed derivatives in Lemma~\ref{lem:Qe-local-vertex} sum to one. This calculation is not used in deriving the identity.  The term
$\rho_t/N$ and the denominator $\theta_N(t)$ are exact finite-$N$
corrections from excluding the fixed incident site in the edge summation.
See the summation over indices distinct from the fixed site in the proof of
Proposition~\ref{prop:Qe-direct-ladder}.  The terms
$\mathfrak R_N^{\mathrm{Schur}}$ and $\mathfrak R_N^{\mathrm{interp}}$
are the global sums in~\eqref{eq:global-remainders}. Their per-pair
definitions are \eqref{eq:R-schur} and~\eqref{eq:R-interp}.  The first
comes from the Schur-complement part of the incident-edge identity, and
the second from replacing full OU covariance by partial incident-edge
covariance during the interpolation.
Both remainders are nonpositive,
$\mathfrak R_N^{\mathrm{Schur}}(t),\,\mathfrak R_N^{\mathrm{interp}}(t)\le 0$
(Lemma~\ref{lem:remainder-signs}), so unconditionally
\begin{equation}\label{eq:upper-profile-main}
  \chi_N(t)\;\le\;\frac{1+\rho_t/N}{\theta_N(t)}
  \qquad\text{(Corollary~\ref{cor:upper-profile}).}
\end{equation}
Assumption~\ref{ass:SK-remainder-closure}, verified directly in
Proposition~\ref{prop:cubic-from-moments}, is the condition needed to
integrate this identity with bounded error through the critical window.
It is weaker than a pointwise rational approximation but has the same consequence for the OU integral:
above the cutoff scale $t_\ast\asymp \chi_{\mathrm{SG}}(N)^{-1}$ the explicit leading term behaves like $(1-e^{-t})^{-1}$, while the OU monotonicity bound
$\chi_N(t)\le \chi_{\mathrm{SG}}(N)$ (Lemma~\ref{lem:chi-CS}) controls the interval below that cutoff.
This is the reason for the logarithm $\log \chi_{\mathrm{SG}}(N)$
in Theorem~\ref{thm:intro-SK} (see also Remark~\ref{rem:boundary-layer}).

For comparison, the following remark records what a stronger pointwise signed-remainder estimate would imply.
\begin{remark}[A pointwise remainder bound would give a rational approximation]\label{cor:scalar-ladder}
The exact identity~\eqref{eq:Qe-exact-identity} always holds. If, in addition, one could prove
\[
  \left|\mathfrak R_N^{\mathrm{Schur}}(t)+\mathfrak R_N^{\mathrm{interp}}(t)\right|
  \le C(c)/N
  \qquad\text{for }t\in[c/\chi_{\rm SG}(N),1],
\]
which is the pointwise estimate stated in~\eqref{eq:Qe-direct-ladder}, then
\begin{equation}\label{eq:scalar-ladder-chi}
  \Bigl(1-\rho_t+\frac{2\rho_t}{N}\Bigr)\chi_N(t)=1+O(N^{-1})
\end{equation}
uniformly on that window, equivalently
\begin{equation}\label{eq:scalar-resolvent}
  \chi_N(t)=\frac{1+O(N^{-1})}{1-\rho_t+2\rho_t/N}.
\end{equation}
At criticality, however, this pointwise estimate cannot hold uniformly
on the stated window for any fixed $c>1$. Write
$\chi=\chi_{\rm SG}(N)$. The established critical scaling gives
$\chi\to\infty$ and $\chi/N\to0$. If the estimate held, the exact
identity would imply
\[
  \frac{\chi_N(jc/\chi)}{\chi}\longrightarrow\frac1{jc}
  \qquad\text{for every fixed integer }j\ge1.
\]
The Hermite expansion in Lemma~\ref{lem:chi-CS} has the form
$\chi_N(t)=\sum_{k\ge0}b_{N,k}e^{-kt}$, with $b_{N,k}\ge0$ and
$\sum_kb_{N,k}=\chi$. Define a random integer $K_N$ by
$\mathbb P(K_N=k)=b_{N,k}/\chi$, and put
$X_N=e^{-cK_N/\chi}\in[0,1]$. Then
$\mathbb E X_N^j\to1/(jc)$. For every fixed integer $m\ge1$,
the binomial expansion therefore gives
\[
  \mathbb E\bigl[1-(1-X_N)^m\bigr]
  \longrightarrow\frac1c\sum_{j=1}^m\frac1j.
\]
The left side is at most $1$, whereas the sum on the right diverges
as $m\to\infty$. This is a contradiction. The main results use the
integrated remainder condition rather than this pointwise estimate.
\end{remark}

\begin{remark}[Crossover regime and the $t=0$ endpoint]%
\label{rem:endpoint-t0}
The pointwise estimate in Remark~\ref{cor:scalar-ladder} cannot hold down to $t=0$. The preceding remark shows that it also cannot hold uniformly on the stated critical window after any fixed cutoff $c/\chi_{\rm SG}(N)$. At $t=0$ one has $\rho_0=1$ and $\chi_N(0)=\chi_{\mathrm{SG}}(N)$. Hence an identity of the form
$(1-\rho_t+2\rho_t/N)\chi_N(t)=1+\mathrm{Err}_t$ would require
\[
  \mathrm{Err}_0=2\chi_{\mathrm{SG}}(N)/N-1.
\]
This error is not $O(N^{-1})$ when $\chi_{\mathrm{SG}}(N)\gg1$. The variance proof therefore does not use a pointwise rational approximation at the endpoint. Instead it splits the OU integral at $t_\ast=c/\chi_{\mathrm{SG}}(N)$ and controls $(0,t_\ast]$ by the OU monotonicity bound $\chi_N(t)\le\chi_{\mathrm{SG}}(N)$ from Lemma~\ref{lem:chi-CS}. Above that scale, the explicit leading term in~\eqref{eq:Qe-exact-identity} has the size $(1-e^{-t})^{-1}$ up to finite-$N$ corrections.
\end{remark}

\begin{remark}[Formal interpretation of the geometric series]\label{rem:geometric-series}
This remark is only interpretive and is not used in any proof.  If the
signed remainders in~\eqref{eq:Qe-exact-identity} are ignored, then
\[
  \chi_N(t)\approx
  \left(1+\frac{\rho_t}{N}\right)
  \sum_{n=0}^\infty[\rho_t(1-2/N)]^n.
\]
For each fixed $N\ge2$, the series does not diverge as
$\rho_t\uparrow1$: its sum tends to $N/2$ (and the displayed expression
to $(N+1)/2$).  Its growth is instead a joint finite-size effect.  As
$N\to\infty$ and $\rho_t\uparrow1$, the series is of order
$(1-\rho_t+2\rho_t/N)^{-1}$. On the critical cutoff scale
$1-\rho_t\asymp N^{-1/3}$, it is of order $N^{1/3}$.  The $n$-th term
represents $n$ iterations of the leading shared-disorder response, with
the factor $1-2/N$ coming from the same incident-edge exclusion that
produces $\theta_N(t)$.  A rigorous lower-bound counterpart of the first
terms of this picture appears in~\cite{Schertzer2026CriticalSK}: in
the Hermite expansion of the variance~\cite{Chatterjee2009}, the
simple disorder cycles of length $k$ jointly contribute
$\tfrac1{2k}+o(1/k)$ to $\Var(\log Z_N)$ for $k\le\log\log N$.
\end{remark}

\begin{lemma}[OU monotonicity of the coupled susceptibility]\label{lem:chi-CS}
Let $\widehat\chi_N(\rho)$ denote the coupled susceptibility when the two
off-diagonal disorder vectors have correlation $\rho\in[0,1]$.  Then
$\rho\mapsto\widehat\chi_N(\rho)$ is nondecreasing.  Consequently,
\[
  \chi_N(t)=\widehat\chi_N(e^{-t})
  \le \widehat\chi_N(1)=\chi_{\rm SG}(N)
  \qquad(t\ge0).
\]
\end{lemma}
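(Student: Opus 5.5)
The plan is to write $\widehat\chi_N(\rho)$ as a power series in $\rho$ with nonnegative coefficients. This is the Hermite (chaos) expansion mentioned in Remark~\ref{cor:scalar-ladder}. Monotonicity then follows at once, because the coefficients are nonnegative and $\rho\in[0,1]$.

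\textbf{Step 1: reduce to a single disorder function.} Let $g=(g_{ij})_{i<j}$ be the off-diagonal disorder, and take two copies $g^{(1)},g^{(2)}$ with $\E[g^{(1)}_{ij}g^{(2)}_{ij}]=\rho$, coordinatewise independent. Write
\[
  \langle R_{1,2}^2\rangle=\frac1{N^2}\sum_{i,j}\langle\sigma_i\sigma_j\rangle_{g^{(1)}}\langle\sigma_i\sigma_j\rangle_{g^{(2)}}.
\]
This holds because the two replicas are independent Gibbs samples given the disorder pair. Hence
\[
  \widehat\chi_N(\rho)=\frac1N\sum_{i,j}\E\bigl[f_{ij}(g^{(1)})f_{ij}(g^{(2)})\bigr],
  \qquad f_{ij}(g):=\langle\sigma_i\sigma_j\rangle_g .
\]
Each $f_{ij}$ is a bounded smooth function of $g$, since $|f_{ij}|\le1$, so it lies in $L^2$ of the Gaussian measure.

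\textbf{Step 2: apply Mehler's formula.} For $\rho$-correlated standard Gaussian vectors, $\E[f(g^{(1)})f(g^{(2)})]=\E[f\,P_{s}f]$, where $P_s$ is the OU semigroup and $e^{-s}=\rho$. Expanding $f=\sum_{k\ge0}f^{(k)}$ in Wiener chaos gives
\[
  \E[f(g^{(1)})f(g^{(2)})]=\sum_{k\ge0}\rho^k\,\E\bigl[(f^{(k)})^2\bigr].
\]
Summing over $(i,j)$, we get
\[
  \widehat\chi_N(\rho)=\sum_{k\ge0}b_{N,k}\rho^k,
  \qquad b_{N,k}=\frac1N\sum_{i,j}\E\bigl[(f^{(k)}_{ij})^2\bigr]\ge0 .
\]
The series converges absolutely on $[0,1]$, since $\sum_k b_{N,k}=N^{-1}\sum_{i,j}\E f_{ij}^2<\infty$. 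At $\rho=1$ it equals $N\,\E\langle R_{1,3}^2\rangle=\chi_{\rm SG}(N)$. An equivalent route avoids chaos: differentiate in $\rho$ and use Gaussian integration by parts to get
\[
  \frac{d}{d\rho}\E[f(g^{(1)})f(g^{(2)})]=\sum_e\E\bigl[\partial_ef(g^{(1)})\,\partial_ef(g^{(2)})\bigr].
\]
The right side is again of the same positive-type form, and one iterates. I would present the chaos version.

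\textbf{Step 3: conclude and identify conventions.} A power series with nonnegative coefficients is nondecreasing on $[0,1]$. It remains to check that the OU construction of Section~\ref{sec:setup} gives exactly correlation $\rho_t=e^{-t}$ on each off-diagonal coordinate. It does: $\sqrt{\rho_t}g+\sqrt{1-\rho_t}\tilde g^{(a)}$ yields cross-covariance $\rho_t$. By Lemma~\ref{lem:GOE-vs-offdiag}, the diagonal GOE entries do not affect Gibbs measures. Therefore $\chi_N(t)=\widehat\chi_N(e^{-t})\le\widehat\chi_N(1)=\chi_{\rm SG}(N)$.

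\textbf{Main obstacle.} The argument is essentially routine; the only delicate point is justifying Mehler's formula and the termwise expansion. Since $f_{ij}\in L^2$, the expansion follows from the $L^2$ chaos decomposition and Parseval, with no regularity needed beyond boundedness.
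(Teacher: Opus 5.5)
Your proposal is correct and follows the paper's proof. The paper likewise expands each two-point function $G_{ij}(J)=\langle\sigma_i\sigma_j\rangle_J$ in orthonormal Hermite polynomials of the off-diagonal disorder, and uses the Hermite--Mehler formula to write $\widehat\chi_N(\rho)=N^{-1}\sum_{i,j}\sum_\alpha a_{ij,\alpha}^2\rho^{|\alpha|}$; this is your series $\sum_k b_{N,k}\rho^k$ after grouping multi-indices by $|\alpha|=k$, and monotonicity together with $\widehat\chi_N(1)=\chi_{\rm SG}(N)$ then follows exactly as you describe.
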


\begin{proof}
For $G_{ij}(J)=\langle\sigma_i\sigma_j\rangle_J$, let
$G_{ij}=\sum_\alpha a_{ij,\alpha}H_\alpha$ be its orthonormal Hermite
expansion in the independent off-diagonal Gaussian coordinates.  The
Hermite--Mehler formula gives
\[
  \widehat\chi_N(\rho)
  =\frac1N\sum_{i,j}\sum_\alpha a_{ij,\alpha}^2\rho^{|\alpha|}.
\]
All coefficients are nonnegative, so the function is nondecreasing in
$\rho$.  This is the corresponding specialization of the expansion used in
Lemma~\ref{lem:partial-hermite}.  The monotonicity is proved in~\cite[Theorem~3.1]{Chatterjee2009} for all
even overlap moments. The Hermite--Mehler argument above keeps the treatment
self-contained.
\end{proof}

\begin{proposition}[Exact integral gap at criticality]
\label{prop:exact-critical-gap}
Write $\chi=\chi_{\rm SG}(N)$ and $r_*=1-\chi^{-1}$. Define
\begin{equation}\label{eq:exact-critical-gap}
\begin{split}
  \mathcal G_N:={}&
  \int_0^{r_*}\left(\frac1{1-r}-\widehat\chi_N(r)\right)\,\dd r
  +\int_{r_*}^1\left(\chi-\widehat\chi_N(r)\right)\,\dd r.
\end{split}
\end{equation}
Both integrands are nonnegative, and
\begin{equation}\label{eq:critical-gap-identity}
  2N^2\Var F_N(1)=1+\log\chi-\mathcal G_N,
  \qquad
  2N^2\Var F_N^{\rm off}(1)=\log\chi-\mathcal G_N.
\end{equation}
\end{proposition}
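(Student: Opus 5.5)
The plan is to change variables in the OU variance identity of Theorem~\ref{thm:var-identity} from $t$ to the disorder correlation $r=\rho_t=e^{-t}$, so that $e^{-t}\,\dd t=-\dd r$. At $\beta=1$, \eqref{eq:var-id-SK} reads $2N^2\Var F_N(1)=\int_0^\infty e^{-t}\chi_N(t)\,\dd t$. The only point to watch is which convention the OU field uses. Applied to the off-diagonal Hamiltonian, whose covariance is $\frac N2(R^2-1/N)$ because the $i=j$ terms are missing, the identity gives
\[
2N^2\Var F_N^{\rm off}(1)=\int_0^1\bigl(\widehat\chi_N(r)-1\bigr)\,\dd r .
\]
The GOE version then follows by adding $1$, either directly from the $R^2$ covariance or via Lemma~\ref{lem:GOE-vs-offdiag}. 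In both conventions the Gibbs measures coincide, so $\widehat\chi_N$ is the same function and $\widehat\chi_N(1)=\chi$. Hence it suffices to prove the off-diagonal statement. I would check this bookkeeping first, since it is what makes the constants $1$ and $0$ come out correctly.

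Next comes the algebra. Split $[0,1]=[0,r_*]\cup[r_*,1]$ with $r_*=1-\chi^{-1}$. Then
\[
\int_0^{r_*}\frac{\dd r}{1-r}=\log\chi,
\qquad
\int_{r_*}^1\chi\,\dd r=1 .
\]
Subtracting the definition of $\mathcal G_N$ gives
\[
\log\chi+1-\mathcal G_N=\int_0^1\widehat\chi_N(r)\,\dd r=2N^2\Var F_N^{\rm off}(1)+1 .
\]
Cancelling the $1$ yields the off-diagonal identity, and adding the GOE shift gives the other one. If $\chi<1$, so that $r_*<0$, the same algebra goes through with the obvious interpretation. Remark~\ref{rem:endpoint-sanity} presumably guarantees $\chi\ge1$ anyway, since $N\E\langle R_{1,3}^2\rangle\ge1$ follows from the diagonal terms at zero field.

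For nonnegativity, the second integrand $\chi-\widehat\chi_N(r)\ge0$ is exactly Lemma~\ref{lem:chi-CS}. For the first, use Corollary~\ref{cor:upper-profile}, i.e.\ \eqref{eq:upper-profile-main}: $\widehat\chi_N(r)\le(1+r/N)/(1-r+2r/N)$. It then remains to show this rational profile is at most $1/(1-r)$. That reduces to $(1+r/N)(1-r)\le 1-r+2r/N$, i.e.\ $-r^2/N\le r/N$, which is trivial for $r\ge0$.

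The main obstacle is therefore not this proposition itself. Its content rests on the sign Lemma~\ref{lem:remainder-signs} behind \eqref{eq:upper-profile-main}, which I take as given. Within the proof, the delicate step is the diagonal-convention bookkeeping in the first paragraph, namely the off-diagonal covariance being $\frac N2 R^2-\frac12$, which produces exactly the $-1$ in $\widehat\chi_N-1$.
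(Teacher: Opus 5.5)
Your proposal is correct and follows the paper's proof. You change variables to $r=e^{-t}$ in the OU identity, use $\int_0^{r_*}\frac{\dd r}{1-r}+\chi(1-r_*)=\log\chi+1$, and get nonnegativity from Corollary~\ref{cor:upper-profile} (through $(1+r/N)/(1-r+2r/N)\le 1/(1-r)$) together with Lemma~\ref{lem:chi-CS}.

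The differences are cosmetic. You obtain the off-diagonal identity directly from the covariance $\frac N2R^2-\frac12$, where the paper subtracts the shift of Lemma~\ref{lem:GOE-vs-offdiag}. The bound $\chi\ge1$ comes from the diagonal terms $G_{ii}=1$; Remark~\ref{rem:endpoint-sanity} gives only the upper bound $\chi\le(N+1)/2$. Finally, the paper treats $N=1$ separately, where $\widehat\chi_1\equiv\chi=1$, because Corollary~\ref{cor:upper-profile} is stated only for $N\ge2$.
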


\begin{proof}
For $N\ge2$ and $0\le r<1$, Corollary~\ref{cor:upper-profile} gives
\[
  \widehat\chi_N(r)
  \le\frac{1+r/N}{1-r+2r/N}\le\frac1{1-r}.
\]
The last inequality follows on multiplying by the positive denominators.
Lemma~\ref{lem:chi-CS} also gives $\widehat\chi_N(r)\le\chi$.
The two bounds agree at $r_*$. Hence~\eqref{eq:exact-critical-gap}
is nonnegative. Since
\[
  \int_0^{r_*}\frac{\dd r}{1-r}+\chi(1-r_*)=\log\chi+1,
\]
the OU variance identity proves the first equality
in~\eqref{eq:critical-gap-identity}. Lemma~\ref{lem:GOE-vs-offdiag}
proves the second. For $N=1$, $\widehat\chi_1(r)=\chi=1$,
$\mathcal G_1=0$, and both equalities hold directly.
\end{proof}

\begin{proposition}[Limit of the integral gap]
\label{prop:limiting-critical-gap}
Let $C_\chi$ be the susceptibility constant in~\eqref{eq:chi-limit}.
Then the exact integral \eqref{eq:exact-critical-gap} satisfies
\begin{equation}\label{eq:limiting-critical-gap}
  \mathcal G_N
  =\log C_\chi
   -2\Bigl(N^2\Var F_N^{\rm off}(1)-\tfrac16\log N\Bigr)+o(1),
\end{equation}
and $\mathcal G_N$ is bounded. Consequently $\mathcal G_N$ converges
if and only if the finite part of the variance converges, that is, if
and only if a constant $c_{\rm Ising}$ satisfying
\eqref{eq:intro-variance-constant} exists. In that case
\[
  \lim_{N\to\infty}\mathcal G_N
  =\log C_\chi-2c_{\rm Ising}\ge0,
\]
and in particular $c_{\rm Ising}\le\tfrac12\log C_\chi$.
\end{proposition}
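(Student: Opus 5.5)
The identity \eqref{eq:limiting-critical-gap} is a direct rewriting of the exact gap identity \eqref{eq:critical-gap-identity} of Proposition~\ref{prop:exact-critical-gap}. In the off-diagonal convention it gives $\mathcal G_N=\log\chi_{\rm SG}(N)-2N^2\Var F_N^{\rm off}(1)$ for every $N$. Write $\log\chi_{\rm SG}(N)=\tfrac13\log N+\log\bigl(N^{-1/3}\chi_{\rm SG}(N)\bigr)$. By the susceptibility limit \eqref{eq:chi-limit} with $C_\chi\in(0,\infty)$, continuity of the logarithm at $C_\chi$ gives $\log(N^{-1/3}\chi_{\rm SG}(N))=\log C_\chi+o(1)$. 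Substituting yields
\[
  \mathcal G_N=\log C_\chi-2\Bigl(N^2\Var F_N^{\rm off}(1)-\tfrac16\log N\Bigr)+o(1),
\]
which is \eqref{eq:limiting-critical-gap}.

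Boundedness needs two sides. The lower bound $\mathcal G_N\ge0$ is already part of Proposition~\ref{prop:exact-critical-gap}, since both integrands are nonnegative. For the upper bound, I need $N^2\Var F_N^{\rm off}(1)-\tfrac16\log N$ to be bounded below. The established variance asymptotics of \cite{DuHuang2026}, $\Var F_N(1)=\tfrac16(\log N)/N^2+O(N^{-2})$, transferred to the off-diagonal convention by Lemma~\ref{lem:GOE-vs-offdiag} (a shift of $1/(2N^2)$), give this.

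If one prefers to avoid quoting the variance theorem, the alternative route is through the remainder decomposition. Theorem~\ref{thm:intro-remainder} gives $2N^2\Var F_N(1)=\log\chi_{\rm SG}(N)-\mathcal D_N^{(c)}+O_c(1)$, and Proposition~\ref{prop:cubic-from-moments} gives $\mathcal D_N^{(c)}=O_c(1)$. Together these show $\log\chi_{\rm SG}(N)-2N^2\Var F_N^{\rm off}(1)=O(1)$, that is, $\mathcal G_N=O(1)$ directly. This is the step I expect to be the only substantive one, because it depends on the critical inputs rather than on the algebra. I would present the route through Proposition~\ref{prop:cubic-from-moments}, so the argument stays within the paper's framework, and note that \cite{DuHuang2026} gives the same conclusion.

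The equivalence follows directly from \eqref{eq:limiting-critical-gap}. Since $\log C_\chi$ is a constant and the error is $o(1)$, $\mathcal G_N$ converges if and only if $N^2\Var F_N^{\rm off}(1)-\tfrac16\log N$ converges. Existence of the latter limit $c_{\rm Ising}$ is exactly \eqref{eq:intro-variance-constant}, and in that case $\lim\mathcal G_N=\log C_\chi-2c_{\rm Ising}$. Finally, $\mathcal G_N\ge0$ for every $N$, so the limit is nonnegative, which gives $c_{\rm Ising}\le\tfrac12\log C_\chi$. This is consistent with \eqref{eq:intro-constant-upper}.
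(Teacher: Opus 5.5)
Your proof is correct and is essentially the paper's argument: you substitute $\log\chi_{\rm SG}(N)=\tfrac13\log N+\log C_\chi+o(1)$ into the exact gap identity~\eqref{eq:critical-gap-identity}, obtain boundedness from Proposition~\ref{prop:saturation-remainder} together with $\mathcal D_N^{(c)}=O_c(1)$ from Proposition~\ref{prop:cubic-from-moments}, and read off the equivalence, the limit and its sign from the expansion and $\mathcal G_N\ge0$. Your alternative via the variance theorem of~\cite{DuHuang2026} would also give boundedness, but the route you say you would present is the one the paper uses, and it avoids that theorem.
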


\begin{proof}
Since $C_\chi>0$, equation~\eqref{eq:chi-limit} implies
$\log\chi_{\rm SG}(N)=\tfrac13\log N+\log C_\chi+o(1)$.
Substituting this into \eqref{eq:critical-gap-identity} gives
\eqref{eq:limiting-critical-gap}. For any fixed $c>1$,
\eqref{eq:critical-gap-identity} and
Proposition~\ref{prop:saturation-remainder} give
$\mathcal G_N=\mathcal D_N^{(c)}+O_c(1)$, and
Proposition~\ref{prop:cubic-from-moments} gives
$\mathcal D_N^{(c)}=O_c(1)$. Hence $\mathcal G_N$ is bounded, the
bracket in \eqref{eq:limiting-critical-gap} is bounded, and the two
convergence statements are equivalent, with the stated limit. Its sign
follows from $\mathcal G_N\ge0$ for every $N$.
\end{proof}

\begin{remark}[Relation between the two integral remainders]
\label{rem:gap-comparison}
The quantity $\mathcal G_N$ is defined without an arbitrary cutoff.
Combining~\eqref{eq:critical-gap-identity} with
Proposition~\ref{prop:saturation-remainder} below gives
$\mathcal G_N-\mathcal D_N^{(c)}=O_c(1)$ for each fixed $c>1$.
They therefore have equivalent boundedness properties. The remainder
$\mathcal D_N^{(c)}$ retains the separate Schur and interpolation
contributions of the local identity, whereas $\mathcal G_N$ records
the exact gap in the susceptibility bound.
\end{remark}

\begin{remark}[Heuristic cutoff interpretation]\label{rem:boundary-layer}
Lemma~\ref{lem:chi-CS} says that decorrelating the disorder decreases the
coupled susceptibility.  Therefore a profile $\chi_N(t)\asymp t^{-1}$ cannot
persist below $t\asymp\chi_{\rm SG}(N)^{-1}$, and the OU integral naturally
produces $\log\chi_{\rm SG}(N)$ rather than $\log N$ directly.
Lemma~\ref{lem:half-log} records the corresponding model computation: the
scalar profile $e^{-t}/(1-e^{-t})$ integrates to $\log M+O(1)$ above the
cutoff $1/M$. Lemma~\ref{lem:explicit-window-integral} is its finite-$N$
version with the exact denominator $\theta_N(t)$.
\end{remark}

\begin{lemma}[Logarithmic integral]\label{lem:half-log}
For every $M\ge1$,
\[
  \int_{1/M}^{1}\frac{e^{-t}}{1-e^{-t}}\,\dd t=\log M+O(1).
\]
\end{lemma}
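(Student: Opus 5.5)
The plan is to compute the integral exactly and then read off the $O(1)$ error, uniformly in $M\ge1$. Substitute $u=1-e^{-t}$, so that $\dd u=e^{-t}\,\dd t$. This turns the integrand into $\dd u/u$. The limits become $u_0=1-e^{-1/M}$ and $u_1=1-e^{-1}$, and therefore
\[
  \int_{1/M}^{1}\frac{e^{-t}}{1-e^{-t}}\,\dd t
  =\log\bigl(1-e^{-1}\bigr)-\log\bigl(1-e^{-1/M}\bigr).
\]
The first term is an absolute constant. It therefore remains to show that $-\log(1-e^{-1/M})=\log M+O(1)$ uniformly for $M\ge1$.

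For this, set $x=1/M\in(0,1]$ and use the elementary two-sided bounds
\[
  x e^{-x}\le 1-e^{-x}\le x .
\]
The upper bound is $e^{-x}\ge 1-x$. The lower bound follows from the mean value theorem, since the derivative $e^{-s}$ is at least $e^{-x}$ on $[0,x]$.

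Taking logarithms gives
\[
  \log M\le -\log(1-e^{-1/M})\le \log M+\frac1M\le\log M+1 .
\]
Combining this with the constant $\log(1-e^{-1})$ yields the claim. The error term then lies in $[\log(1-e^{-1}),\,1+\log(1-e^{-1})]$, which is bounded independently of $M$.

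There is no real obstacle here. The only point needing care is that the $O(1)$ be uniform down to $M=1$. At $M=1$ the integration window is empty, and both sides are consistent since $\log 1=0$. The two-sided bound on $1-e^{-x}$ over all of $(0,1]$, rather than a small-$x$ Taylor expansion, is what secures this uniformity.
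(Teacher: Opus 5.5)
Your proof is correct and follows essentially the paper's argument: evaluate the integral exactly as $\log(1-e^{-1})-\log(1-e^{-1/M})$, then bound $1-e^{-x}$ on both sides for $x\in(0,1]$. The only difference is that you use the lower bound $xe^{-x}\le 1-e^{-x}$ where the paper uses $x/2\le 1-e^{-x}$, and either choice gives the uniform $O(1)$.
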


\begin{proof}
Since $\frac{\dd}{\dd t}\log(1-e^{-t})=e^{-t}/(1-e^{-t})$,
\[
  \int_{1/M}^{1}\frac{e^{-t}}{1-e^{-t}}\,\dd t
  =\log(1-e^{-1})-\log(1-e^{-1/M}).
\]
For $x\in(0,1]$, $x/2\le1-e^{-x}\le x$, which gives the claim.
\end{proof}

\begin{lemma}[Explicit critical-window integral]\label{lem:explicit-window-integral}
Fix $c>1$ and write $\chi=\chi_{\rm SG}(N)$.  With the preceding convention for an empty integration interval,
\begin{equation}\label{eq:explicit-window-integral}
  \int_{c/\chi}^{1}e^{-t}
  \frac{1+\rho_t/N}{\theta_N(t)}\,\dd t
  =\log\chi+O_c(1).
\end{equation}
\end{lemma}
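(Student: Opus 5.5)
We prove \eqref{eq:explicit-window-integral} by an exact computation, changing variables to $r=\rho_t=e^{-t}$, so that $e^{-t}\,\dd t=-\dd r$ and the window $t\in[c/\chi,1]$ becomes $r\in[e^{-1},e^{-c/\chi}]$. The integral is then
\[
  \int_{e^{-1}}^{e^{-c/\chi}}\frac{1+r/N}{1-r(1-2/N)}\,\dd r .
\]
If $c/\chi\ge1$ the integral is zero by convention. Since $\chi\le c$ in that case, $\log\chi\le\log c$. Moreover $\chi\ge1$: at $t=0$ the diagonal terms give $N\E\langle R_{1,3}^2\rangle\ge N\cdot N^{-1}=1$. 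So $\log\chi=O_c(1)$ and the claim is immediate. We therefore assume $\chi>c$. The case $N=1$ is trivial, since then $\chi=1<c$, so we also take $N\ge2$.

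Write $a:=1-2/N\in[0,1)$. The integrand is a rational function of $r$ whose denominator $1-ar$ is positive on $[0,1]$, and it splits as
\[
  \frac{1+r/N}{1-ar}=\frac{1+1/(Na)}{1-ar}-\frac{1}{Na}
\]
when $a>0$ (for $N=2$, $a=0$ and the integrand is bounded). The constant piece integrates to $O(1)$ over an interval of length at most $1$, since $1/(Na)\le1$ for $N\ge3$. The main piece integrates to
\[
  \frac{1+1/(Na)}{a}\,\log\frac{1-ae^{-1}}{1-ae^{-c/\chi}} .
\]
The numerator $1-ae^{-1}$ lies in $[1-e^{-1},1]$, so its logarithm is $O(1)$. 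For the denominator write
\[
  1-ae^{-c/\chi}=(1-e^{-c/\chi})+\frac{2}{N}e^{-c/\chi}.
\]
This lies between $c/(2\chi)$ and $c/\chi+2/N$, using $x/2\le 1-e^{-x}\le x$ as in Lemma~\ref{lem:half-log}.

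The main point is to bound $2/N$ by $O(1/\chi)$. This holds because $\chi\le(N+1)/2$, which is the upper profile \eqref{eq:upper-profile-main} at $t=0$ (Remark~\ref{rem:endpoint-sanity}). It gives $2/N\le 3/\chi$, and hence
\[
  -\log\bigl(1-ae^{-c/\chi}\bigr)=\log\chi+O_c(1).
\]
Finally we remove the prefactor. We have $(1+1/(Na))/a=1+O(1/N)$, and $\log\chi\le\log N$, so $O(1/N)\cdot\log\chi=O(1)$. This yields $\log\chi+O_c(1)$, uniformly in $N$.

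The one delicate step is the lower bound on $1-ae^{-c/\chi}$. Without the a priori bound $\chi\lesssim N$, the finite-$N$ term $2\rho_t/N$ in $\theta_N$ could dominate and replace $\log\chi$ by $\log N$. The endpoint bound from Corollary~\ref{cor:upper-profile} excludes this.
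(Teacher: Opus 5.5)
Your proof is correct and follows essentially the same route as the paper: substitute $\rho=e^{-t}$, integrate the rational function explicitly, and control the upper endpoint through $1-e^{-c/\chi}(1-2/N)\asymp c/\chi+2/N$ together with an a priori bound $\chi\lesssim N$. The paper gets that bound from $R_{1,3}^2\le1$, i.e.\ $\chi\le N$, which already gives $2/N\le2/\chi$, so Corollary~\ref{cor:upper-profile} is not needed (though using it is not circular); also, the step you call the ``lower bound'' on $1-ae^{-c/\chi}$ is really the upper bound $1-ae^{-c/\chi}\le c/\chi+2/N$.
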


\begin{proof}
The cases $N\le2$ are immediate: if $N=1$ the integral is empty, while for $N=2$ the denominator equals $1$ and both sides are $O_c(1)$.  Assume $N\ge3$.  When $c/\chi\ge1$, both sides are $O_c(1)$.  Otherwise put $a=c/\chi$ and
substitute $\rho=e^{-t}$:
\[
  \int_a^1e^{-t}\frac{1+\rho_t/N}{\theta_N(t)}\,\dd t
  =\int_{e^{-1}}^{e^{-a}}\frac{1+\rho/N}{1-\rho+2\rho/N}\,\dd\rho.
\]
The numerator correction contributes $O_c(1)$.  For the main term,
\[
  \int_{e^{-1}}^{e^{-a}}\frac{\dd\rho}{1-\rho(1-2/N)}
  =\frac{N}{N-2}
   \left[\log\frac1{1-\rho(1-2/N)}\right]_{e^{-1}}^{e^{-a}}.
\]
Since $1\le\chi\le N$ (the diagonal of $\E\langle R_{1,3}^2\rangle$ gives
$\chi\ge1$, and $R_{1,3}^2\le1$ gives $\chi\le N$) and
$1-e^{-a}(1-2/N)=c/\chi+2/N+O_c(\chi^{-2})$, the upper endpoint is
$\log\chi+O_c(1)$. The lower endpoint is $O(1)$.
\end{proof}

\begin{lemma}[Cutoff stability of the integrated remainder]
\label{lem:cutoff-stability}
For fixed $1<c_1<c_2$,
\[
  \bigl|\mathcal D_N^{(c_1)}-\mathcal D_N^{(c_2)}\bigr|
  =O_{c_1,c_2}(1).
\]
Thus the bounded-remainder assumption for one fixed cutoff is equivalent to the
same assumption for every fixed cutoff.
\end{lemma}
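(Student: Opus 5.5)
For $c_1<c_2$ the two windows $[c_1/\chi,1]$ and $[c_2/\chi,1]$ are nested, with $\chi=\chi_{\rm SG}(N)$. The plan is to write the difference $\mathcal D_N^{(c_1)}-\mathcal D_N^{(c_2)}$ as an integral over the thin strip $[c_1/\chi,c_2/\chi]$ and bound that integral directly. We may assume $c_1/\chi<1$. If instead $c_1/\chi\ge1$, both windows are empty and the difference is zero. If only $c_2/\chi\ge1$, the second integral is zero and the strip is $[c_1/\chi,1]$, which is contained in $[c_1/\chi,c_2/\chi]$. The bounds below apply on this smaller strip as well, since the integrand is nonnegative.

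We use the exact identity \eqref{eq:intro-Qe-exact-identity} in the form of Remark~\ref{rem:integrated-resolvent}. This rewrites the integrand as
\[
  e^{-t}\left[\frac{1+\rho_t/N}{\theta_N(t)}-\chi_N(t)\right],
\]
which is nonnegative by Lemma~\ref{lem:remainder-signs}. So the difference equals
\[
  \int_{c_1/\chi}^{c_2/\chi}e^{-t}\left[\frac{1+\rho_t/N}{\theta_N(t)}-\chi_N(t)\right]\dd t ,
\]
and it lies between $0$ and the integral of the first term alone, since $\chi_N(t)\ge0$.

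It remains to bound the rational term on the strip. For $t\le1$ we have $1-\rho_t\ge t/2$ (as in Lemma~\ref{lem:half-log}) and $\rho_t\ge0$, so $\theta_N(t)\ge t/2$. Also $1+\rho_t/N\le2$. Hence the integrand is at most $4/t$, and
\[
  \int_{c_1/\chi}^{c_2/\chi}\frac{4}{t}\,\dd t=4\log\frac{c_2}{c_1}=O_{c_1,c_2}(1).
\]
The truncation at $1$ in the partially empty case only shortens the interval. For $N\le2$ the bound is immediate, because $\theta_N\ge 2\rho_t/N\ge e^{-1}$ on $[0,1]$ and the strip has length at most $1$.

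I do not expect a real obstacle. The only care needed is the empty-window convention and the fact that $\chi_N\ge0$, which lets us drop that term from the upper bound. Equivalence of the assumption across cutoffs then follows at once.
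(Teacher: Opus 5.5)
Your proposal is correct and follows the paper's proof. You reduce to the strip $[c_1/\chi,\min\{c_2/\chi,1\}]$, use the exact identity and the remainder signs to sandwich the integrand between $0$ and $e^{-t}(1+\rho_t/N)/\theta_N(t)$, and integrate the rational bound. Your explicit estimate $\theta_N(t)\ge 1-e^{-t}\ge t/2$, giving the bound $4\log(c_2/c_1)$, is a slightly cleaner substitute for the paper's appeal to the endpoint computation of Lemma~\ref{lem:explicit-window-integral}; it holds for every $N$, so your separate $N\le2$ case is unnecessary.
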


\begin{proof}
If $c_1/\chi_{\rm SG}(N)\ge1$, both remainders vanish.  Otherwise their
difference is the remainder integral over
\[
  [c_1/\chi_{\rm SG}(N),\,
    \min\{c_2/\chi_{\rm SG}(N),1\}].
\]
By the exact identity and $\chi_N(t)\ge0$,
\[
  0\le
  \frac{-\mathfrak R_N^{\rm Schur}(t)-\mathfrak R_N^{\rm interp}(t)}
       {\theta_N(t)}
  =\frac{1+\rho_t/N}{\theta_N(t)}-\chi_N(t)
  \le\frac{1+\rho_t/N}{\theta_N(t)}.
\]
The explicit rational bound integrates to $O_{c_1,c_2}(1)$ on this interval, by the same endpoint computation as
Lemma~\ref{lem:explicit-window-integral}.
\end{proof}

\begin{proposition}[Integrated signed-remainder decomposition and saturation equivalence]
\label{prop:saturation-remainder}
For every fixed $c>1$,
\begin{equation}\label{eq:saturation-remainder-identity}
  2N^2\Var F_N(1)
  =\log\chi_{\rm SG}(N)-\mathcal D_N^{(c)}+O_c(1).
\end{equation}
Consequently,
\begin{equation}\label{eq:saturation-equivalence}
  \mathcal D_N^{(c)}=O_c(1)
  \quad\Longleftrightarrow\quad
  \Var F_N(1)=\frac{\log\chi_{\rm SG}(N)}{2N^2}+O(N^{-2}).
\end{equation}
More generally, if
$\mathcal D_N^{(c)}=\lambda_N\log\chi_{\rm SG}(N)+O_c(1)$, then
\begin{equation}\label{eq:partial-depletion-coefficient}
  \Var F_N(1)
  =\frac{1-\lambda_N}{2N^2}\log\chi_{\rm SG}(N)+O_c(N^{-2}).
\end{equation}
\end{proposition}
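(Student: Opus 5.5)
We split the OU variance identity \eqref{eq:var-id-SK} at $\beta=1$, which reads $2N^2\Var F_N(1)=\int_0^\infty e^{-t}\chi_N(t)\,\dd t$, into three pieces at $t_\ast=c/\chi$ and $t=1$, where $\chi=\chi_{\rm SG}(N)$. If $c/\chi\ge1$, the window is empty and $\mathcal D_N^{(c)}=0$. In that case $\log\chi\le\log c$, and the whole integral is at most $\int_0^\infty e^{-t}\chi\,\dd t=\chi\le c$ by Lemma~\ref{lem:chi-CS}. So \eqref{eq:saturation-remainder-identity} holds trivially. Assume henceforth $c/\chi<1$.

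On $(0,t_\ast]$, Lemma~\ref{lem:chi-CS} gives $0\le\chi_N(t)\le\chi$, so this piece lies in $[0,\chi t_\ast]=[0,c]$ and is $O_c(1)$. On $[1,\infty)$, Corollary~\ref{cor:upper-profile} gives $\chi_N(t)\le(1+\rho_t/N)/\theta_N(t)$. Since $\theta_N(t)\ge1-e^{-1}$ for $t\ge1$, this bound is at most $2/(1-e^{-1})$. Together with $\chi_N\ge0$, this piece is $O(1)$.

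On the window $[c/\chi,1]$ we use the exact identity \eqref{eq:intro-Qe-exact-identity} divided by $\theta_N(t)$, which holds pointwise:
\[
  \chi_N(t)=\frac{1+\rho_t/N}{\theta_N(t)}-\frac{-\mathfrak R_N^{\rm Schur}(t)-\mathfrak R_N^{\rm interp}(t)}{\theta_N(t)}.
\]
Multiply by $e^{-t}$ and integrate over the window. The first term gives $\log\chi+O_c(1)$ by Lemma~\ref{lem:explicit-window-integral}. The second gives exactly $-\mathcal D_N^{(c)}$ by definition \eqref{eq:def-depletion-remainder}. Its nonnegativity follows from Lemma~\ref{lem:remainder-signs}, or equivalently from Remark~\ref{rem:integrated-resolvent}. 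Summing the three pieces gives \eqref{eq:saturation-remainder-identity}. The small cases $N\le2$ fall under the conventions already handled in Lemma~\ref{lem:explicit-window-integral}.

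The consequences are then algebraic. For \eqref{eq:saturation-equivalence}, the identity shows that $2N^2\Var F_N(1)-\log\chi$ and $-\mathcal D_N^{(c)}$ differ by $O_c(1)$. Hence one is bounded iff the other is. Since $\mathcal D_N^{(c)}$ does not depend on any choice beyond $c$, Lemma~\ref{lem:cutoff-stability} makes the bounded side independent of $c$, so the right-hand side has an honest $O(N^{-2})$. For \eqref{eq:partial-depletion-coefficient}, substitute $\mathcal D_N^{(c)}=\lambda_N\log\chi+O_c(1)$ into \eqref{eq:saturation-remainder-identity} and divide by $2N^2$.

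The only step needing care is the window integral, and specifically that Lemma~\ref{lem:explicit-window-integral} and the definition of $\mathcal D_N^{(c)}$ use the same integrand and endpoints. This holds because the identity is exact pointwise, so no estimate beyond those already recorded is required. Thus I expect no genuine obstacle: the proposition is a bookkeeping combination of Theorem~\ref{thm:var-identity}, Lemma~\ref{lem:chi-CS}, Corollary~\ref{cor:upper-profile} and Lemma~\ref{lem:explicit-window-integral}.
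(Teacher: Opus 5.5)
Your proposal is correct and follows the paper's proof essentially step for step. It uses the same three-way split of the OU integral at $c/\chi_{\rm SG}(N)$ and $1$, and it handles the empty-window case $c/\chi_{\rm SG}(N)\ge1$ the same way, through the bound $\chi_N(t)\le\chi_{\rm SG}(N)\le c$. On each piece you use the same inputs as the paper: Lemma~\ref{lem:chi-CS} below the cutoff, the exact identity together with Lemma~\ref{lem:explicit-window-integral} on the window, and Corollary~\ref{cor:upper-profile} on the tail $t\ge1$.
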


\begin{proof}
Write $\chi=\chi_{\rm SG}(N)$ and $a=c/\chi$.  If $a\ge1$, then
$\log\chi=O_c(1)$ and Lemma~\ref{lem:chi-CS} gives
$2N^2\Var F_N(1)\le\chi=O_c(1)$, so the identity is absorbed into the
error term.  If $a<1$, the exact incident-edge identity gives
\[
  \int_a^1e^{-t}\chi_N(t)\,\dd t
  =\int_a^1e^{-t}\frac{1+\rho_t/N}{\theta_N(t)}\,\dd t
   -\mathcal D_N^{(c)}.
\]
The first term is $\log\chi+O_c(1)$ by
Lemma~\ref{lem:explicit-window-integral}.  The integral over $[0,a]$ is
at most $a\chi=c$ by Lemma~\ref{lem:chi-CS}, while the tail $t\ge1$ is
$O(1)$ by Corollary~\ref{cor:upper-profile}.  The OU variance identity now
gives \eqref{eq:saturation-remainder-identity}. The remaining assertions are
immediate.
\end{proof}

\begin{theorem}[Finite-size variance bound]
\label{thm:critical-variance-upper}
At $\beta=1$, $h=0$, and every $N\ge1$,
\[
  \Var F_N(1)\le\frac{1+\log\chi_{\rm SG}(N)}{2N^2},
  \qquad
  \Var F_N^{\rm off}(1)\le\frac{\log\chi_{\rm SG}(N)}{2N^2}.
\]
The susceptibility consequences stated in
Theorem~\ref{thm:intro-SK-upper} follow.
\end{theorem}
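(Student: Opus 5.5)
The bound follows at once from the exact gap identity of Proposition~\ref{prop:exact-critical-gap}. That proposition gives
\[
  2N^2\Var F_N(1)=1+\log\chi_{\rm SG}(N)-\mathcal G_N,
  \qquad
  2N^2\Var F_N^{\rm off}(1)=\log\chi_{\rm SG}(N)-\mathcal G_N,
\]
with $\mathcal G_N\ge0$. Dropping $\mathcal G_N$ and dividing by $2N^2$ gives both displayed inequalities. The plan is therefore to re-check that the two ingredients behind $\mathcal G_N\ge0$ hold for every $N\ge1$ without asymptotic input. We then read off the susceptibility consequences.

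\emph{Nonnegativity of $\mathcal G_N$.} On $[0,r_*)$ the integrand is nonnegative by the upper profile \eqref{eq:upper-profile-main} (Corollary~\ref{cor:upper-profile}), combined with the elementary inequality $(1+r/N)/(1-r+2r/N)\le 1/(1-r)$. After clearing the positive denominators, this inequality reduces to $-r^2/N\le0$ for $N\ge2$. This is the only place where the remainder signs of Lemma~\ref{lem:remainder-signs} enter. On $[r_*,1]$ the integrand is nonnegative by the OU monotonicity of Lemma~\ref{lem:chi-CS}. The case $N=1$ is trivial, since then $\widehat\chi_1\equiv1=\chi$ and $\mathcal G_1=0$.

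We also check that $r_*=1-\chi^{-1}\in[0,1)$. This needs $\chi\ge1$, which holds because the diagonal terms of $N\E\langle R_{1,3}^2\rangle$ already contribute $1$. The identity itself then comes from the OU formula \eqref{eq:var-id-SK} after substituting $r=e^{-t}$, which gives $2N^2\Var F_N(1)=N\int_0^1\E\langle R_{1,2}^2\rangle\,\dd r$ in the GOE normalization. Here a small point needs care. The paper writes $\chi_N$ in the GOE convention, but the coupled susceptibility is identical in both conventions by Lemma~\ref{lem:GOE-vs-offdiag}. The extra $+1$ in the GOE statement is exactly the diagonal shift $2N^2\cdot\beta^2/(2N^2)=1$ at $\beta=1$.

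\emph{Consequences.} If $\chi_{\rm SG}(N)\le N^{\gamma+o(1)}$, then $\log\chi_{\rm SG}(N)\le(\gamma+o(1))\log N$, which gives the coefficient $\gamma/2$. Under \eqref{eq:chi-limit} we have $\log\chi_{\rm SG}(N)=\frac13\log N+\log C_\chi+o(1)$. Hence $N^2\Var F_N^{\rm off}(1)-\frac16\log N\le\frac12\log C_\chi+o(1)$, which is \eqref{eq:intro-constant-upper}. Adding the GOE shift $\frac12$ gives $(1+\log C_\chi)/2$.

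\emph{Main obstacle.} There is no real obstacle at this stage; the substantive input is the sign of the remainders in Lemma~\ref{lem:remainder-signs}, which is taken as given. The one point to verify carefully is that Corollary~\ref{cor:upper-profile} is stated for the correlation parameter $r\in[0,1)$ in the off-diagonal disorder. That is what the Price-theorem incident-edge identity produces. Beyond that, we need only confirm that no estimate used is asymptotic, so that the bound holds for every $N\ge1$.
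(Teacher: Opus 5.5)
Your proposal is correct and follows the paper's proof: drop $\mathcal G_N\ge0$ in the exact gap identity \eqref{eq:critical-gap-identity}, then insert $\log\chi_{\rm SG}(N)=\tfrac13\log N+\log C_\chi+o(1)$ from \eqref{eq:chi-limit} to obtain the asymptotic consequence. One harmless arithmetic slip: clearing denominators in $(1+r/N)/(1-r+2r/N)\le1/(1-r)$ gives $-r(1+r)/N\le0$, not $-r^2/N\le0$, and the conclusion is unchanged.
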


\begin{proof}
Drop $\mathcal G_N\ge0$ in~\eqref{eq:critical-gap-identity}.
The limiting bound~\eqref{eq:intro-constant-upper} follows by inserting
$\log\chi_{\rm SG}(N)=\tfrac13\log N+\log C_\chi+o(1)$
from~\eqref{eq:chi-limit}.
\end{proof}

\begin{theorem}[Saturation criterion]
\label{thm:critical-variance-constant}
Under Assumption~\ref{ass:SK-remainder-closure},
\[
  \Var F_N(1)=\frac{\log\chi_{\rm SG}(N)}{2N^2}+O(N^{-2}).
\]
\end{theorem}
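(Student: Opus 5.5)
This follows directly from Proposition~\ref{prop:saturation-remainder}, so the proof is a short deduction. Fix any $c>1$ for which Assumption~\ref{ass:SK-remainder-closure} provides $\mathcal D_N^{(c)}=O_c(1)$. By Lemma~\ref{lem:cutoff-stability}, the choice of $c$ does not matter. We may take, say, $c=2$ and treat it as an absolute constant, so that $O_c(1)$ becomes $O(1)$.

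The first step is to invoke the decomposition \eqref{eq:saturation-remainder-identity}:
\[
  2N^2\Var F_N(1)=\log\chi_{\rm SG}(N)-\mathcal D_N^{(c)}+O_c(1).
\]
This identity holds for every $N\ge1$ with a constant independent of $N$. Its proof already treats the degenerate case $c/\chi_{\rm SG}(N)\ge1$ separately: there the window is empty, $\mathcal D_N^{(c)}=0$, and both sides are $O_c(1)$ by Lemma~\ref{lem:chi-CS}.

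Next, substitute the assumed bound $\mathcal D_N^{(c)}=O_c(1)$. This gives $2N^2\Var F_N(1)=\log\chi_{\rm SG}(N)+O(1)$. Dividing by $2N^2$ yields the claimed asymptotic $\Var F_N(1)=\log\chi_{\rm SG}(N)/(2N^2)+O(N^{-2})$. This is precisely the forward direction of the equivalence \eqref{eq:saturation-equivalence}.

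For the off-diagonal convention, Lemma~\ref{lem:GOE-vs-offdiag} shows the two variances differ only by the explicit term $1/(2N^2)$. That shift is absorbed into the $O(N^{-2})$ error, so the same statement holds there.

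There is no genuine obstacle here, since all analytic content sits in Proposition~\ref{prop:saturation-remainder} and in the exact incident-edge identity behind it. The only point needing care is uniformity. The $O_c(1)$ terms produced by Lemma~\ref{lem:explicit-window-integral}, by the bound $a\chi=c$ on $[0,a]$, and by the tail $t\ge1$ from Corollary~\ref{cor:upper-profile} must all be bounded independently of $N$. I would check that each of these constants depends only on $c$, and that the small-$N$ cases ($N\le2$) are covered by the explicit discussion in Lemma~\ref{lem:explicit-window-integral}.
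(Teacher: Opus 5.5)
Your proposal is correct and is essentially the paper's proof. The paper likewise combines the decomposition of Proposition~\ref{prop:saturation-remainder} with the assumed bound $\mathcal D_N^{(c)}=O_c(1)$, and uses Lemma~\ref{lem:cutoff-stability} to make the choice of fixed $c>1$ immaterial.
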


\begin{proof}
Combine Proposition~\ref{prop:saturation-remainder} with
Assumption~\ref{ass:SK-remainder-closure}. Cutoff stability makes the choice
of fixed $c>1$ immaterial.
\end{proof}

\begin{remark}[Meaning of saturation]\label{rem:saturation}
The sign lemma shows that the remainder decreases the explicit
logarithmic contribution. Saturation means this decrease is bounded.
Proposition~\ref{prop:cubic-from-moments} verifies the required bound
directly using the cited moment and cavity estimates. Conversely, any independent
variance lower bound supplies an upper bound on the integrated remainder
through Proposition~\ref{prop:saturation-remainder}.
\end{remark}

Before stating a sufficient condition, recall the mixed cubic functional
$\mathfrak C_N(t)$ from \eqref{eq:mixed-cubic-functional}.  For a fixed site
$j$, the expectation $\E_{j,\lambda}$ changes only the cross-covariance of the
edges incident to $j$. All one-system marginals remain the ordinary SK law.

\begin{proposition}[Integrated cubic criterion for the Schur remainder]
\label{prop:mixed-cubic-schur}
Let $\chi=\chi_{\rm SG}(N)$ and fix $c>1$. If
\begin{equation}\label{eq:integrated-cubic-control}
  \int_{c/\chi}^{1}e^{-t}
  \frac{\rho_t N\mathfrak C_N(t)}{\theta_N(t)}\,\dd t
  =O_c(1),
\end{equation}
then
\[
  \int_{c/\chi}^{1}e^{-t}
  \frac{-\mathfrak R_N^{\rm Schur}(t)}{\theta_N(t)}\,\dd t
  =O_c(1).
\]
Consequently the full signed-remainder assumption holds: the interpolation
part is bounded by Lemma~\ref{lem:interp-integrated-app} and the classical
estimate $\chi=O(N^{1/2})$~\cite[Proposition~11.7.8]{Talagrand2011Vol2}.
If in addition $\chi\le K N^{1/3}$, a sufficient pointwise condition for
\eqref{eq:integrated-cubic-control} is
\begin{equation}\label{eq:mixed-cubic-control}
  \frac1N\sum_{j=1}^N\int_0^1
  \E_{j,s\rho_t}
  \left|\left\langle R_{12}R_{13}q_j^{23}\right\rangle\right|\,\dd s
  \le C_*\frac{1+\chi_N(t)}{N}
       \left(\frac{\chi}{N}\right)^{1/2}
\end{equation}
uniformly for $t\in[c/\chi,1]$.
\end{proposition}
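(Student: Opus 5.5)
The plan is to work from the per-pair definition of the Schur remainder \eqref{eq:R-schur} in Appendix~\ref{app:replicon} and bound $-\mathfrak R_N^{\rm Schur}(t)$ pointwise in $t$ by a constant multiple of $\rho_t N\mathfrak C_N(t)$, plus error terms whose weighted integrals over $[c/\chi,1]$ are $O_c(1)$. I expect the Schur remainder to arise from a Schur-complement correction in the Price-theorem interpolation $\lambda\mapsto\E_{j,\lambda}$, from $\lambda=0$ to $\lambda=\rho_t$ on the edges incident to site $j$. Such a correction is a quadratic form in the cross-system correlations, evaluated along the path $\lambda=s\rho_t$, $s\in[0,1]$. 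Differentiating in $\lambda$ produces Gibbs covariances of $\sigma_i^{(1)}\sigma_j^{(1)}$ with $\sigma_i^{(2)}\sigma_j^{(2)}$. After summing over $i\ne j$, this becomes $N$ times a cubic overlap expression of the form $R_{12}R_{13}q_j^{23}$.

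The key steps, in order:
\begin{enumerate}
\item Rewrite $\sum_{i\ne j}\mathcal R^{\rm Schur}_{ij}(t)$ as $\rho_t\int_0^1(\cdots)\,\dd s$ under $\E_{j,s\rho_t}$.
\item Identify the leading integrand with the summand defining $\mathfrak C_N(t)$ in \eqref{eq:mixed-cubic-functional}, using spin-flip symmetry at $h=0$ to kill odd terms. The diagonal $i=j$ exclusion should produce $O(\rho_t/N)$ corrections, which integrate against $e^{-t}/\theta_N(t)$ to $O(1)$ because $\int_{c/\chi}^1\theta_N^{-1}\,\dd t\lesssim\log\chi\le\log N$ and the factor $1/N$ absorbs it.
\item Since $-\mathfrak R^{\rm Schur}_N\ge0$ by Lemma~\ref{lem:remainder-signs}, bound it by $C\rho_t N\mathfrak C_N(t)+O(\rho_t/N)$, divide by $\theta_N$, integrate, and invoke \eqref{eq:integrated-cubic-control}.
\end{enumerate}

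For the consequence, write $\mathcal D_N^{(c)}$ as the sum of the Schur and interpolation integrals. Lemma~\ref{lem:interp-integrated-app} bounds the interpolation integral by $O(\chi^2/N)$, and this is $O(1)$ by $\chi=O(N^{1/2})$.

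For the pointwise sufficient condition, insert \eqref{eq:mixed-cubic-control} into $\rho_tN\mathfrak C_N(t)$ (taking absolute values). This gives the bound $C_*(1+\chi_N(t))(\chi/N)^{1/2}$. Now divide by $\theta_N(t)$ and integrate:
\begin{itemize}
\item The constant part gives $(\chi/N)^{1/2}\log\chi=O(1)$.
\item For the $\chi_N(t)$ part, use Corollary~\ref{cor:upper-profile}, $\chi_N\le 2/\theta_N$, together with $\theta_N(t)\ge t/2$. This gives $\int_{c/\chi}^1\theta_N^{-2}\,\dd t\lesssim\chi/c$, so the contribution is $\lesssim(\chi/N)^{1/2}\chi=\chi^{3/2}N^{-1/2}$.
\item With $\chi\le KN^{1/3}$ this is $K^{3/2}$, which is bounded.
\end{itemize}
That explains the hypothesis $\chi\le KN^{1/3}$.

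The main obstacle is the first step: matching the exact per-pair Schur remainder with the mixed cubic functional, with the right factor $\rho_tN$ and controlled lower-order terms. If the match is not exact, I would bound the extra terms by Cauchy--Schwarz under $\E_{j,\lambda}$, using that one-system marginals are ordinary SK so that $\E\langle R_{12}^2\rangle$-type factors are at most $\chi/N$.
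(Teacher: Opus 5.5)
Your handling of the consequence (Lemma~\ref{lem:interp-integrated-app} together with $\chi=O(N^{1/2})$) matches the paper. So does your handling of the pointwise criterion: you insert \eqref{eq:mixed-cubic-control}, use $\chi_N\le2/\theta_N$ and $\theta_N\ge t/2$, and obtain $(\chi/N)^{1/2}(\log\chi+\chi)=O(1)$ for $\chi\le KN^{1/3}$. Your overall plan for the main implication is also the paper's: dominate $-\mathfrak R_N^{\rm Schur}(t)$ pointwise by a multiple of $\rho_tN\mathfrak C_N(t)$, then integrate.

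The gap is that this pointwise domination is the entire content of the implication, and you leave it unproved as the ``main obstacle''. It is Proposition~\ref{prop:schur-cubic-control}, $-\mathfrak R_N^{\rm Schur}(t)\le2\rho_tN\mathfrak C_N(t)-2\rho_t/N$. Moreover, the mechanism you anticipate is not the one at work. No differentiation in $\lambda$ is involved. Price's theorem was already used in Lemma~\ref{lem:incident-price}, where the Gibbs covariance is within a single system, $\Cov_J(\sigma_i\sigma_j,\sigma_k\sigma_j)=\widehat G^{(j)}_{ik}$. The quantity $\mathcal R^{\rm Schur}_{ij}$ in \eqref{eq:R-schur} is an explicit algebraic difference of products, and it contains no odd terms for parity to remove.

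Expanding by \eqref{eq:schur-expand} and summing over $i,k\ne j$ gives the exact identity
\[
  -\mathfrak R_N^{\rm Schur}(t)=\frac{\rho_t}{N^2}\sum_{j=1}^N\int_0^1\E_{j,s\rho_t}\Bigl[X^{\rm res}_{j,1}+X^{\rm res}_{j,2}-\Bigl(\sum_{u\ne j}G^1_{uj}G^2_{uj}\Bigr)^2\Bigr]\dd s .
\]
The quartic square must be dropped using its sign. Your fallback, Cauchy--Schwarz with $\E_{j,\lambda}\langle R_{12}^2\rangle\le\chi_N(t)/N$, fails on this term. It bounds the term only by $O(\rho_t\chi_N(t))$, and then $\int_{c/\chi}^1e^{-t}\rho_t\chi_N(t)\theta_N(t)^{-1}\,\dd t$ is controlled only by a multiple of $\chi$, not by $O_c(1)$.

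For the cubic terms you need three facts.
\begin{itemize}
\item The unrestricted sum $X_{j,1}=\sum_{u,v}G^1_{uv}G^2_{uj}G^2_{vj}$ equals $N^2\langle R_{12}R_{13}q^{23}_j\rangle$, by Lemma~\ref{lem:triple-overlap-rep}.
\item The difference $X_{j,\ell}-X^{\rm res}_{j,\ell}=2\sum_uG^1_{uj}G^2_{uj}-1$ has $\E_{j,\lambda}$-expectation at least $1$, because each $\E_{j,\lambda}[G^1_{uj}G^2_{uj}]\ge0$ by Lemma~\ref{lem:partial-hermite}. So the site-exclusion correction is favorable. Your size estimate $O(\rho_t/N)$ for it is too optimistic, since only $\rho_t(2\chi_N(t)-1)/N$ is available, but the sign makes this irrelevant.
\item In $X_{j,2}$ the two systems play reversed roles relative to the summand of $\mathfrak C_N$. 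It has the same $\E_{j,\lambda}$-expectation as $X_{j,1}$ by the exchange symmetry of the partial-covariance law, and this produces the constant $2$.
\end{itemize}
With these facts in place, your steps 2 and 3 go through. Alternatively, you can cite Proposition~\ref{prop:schur-cubic-control} directly, as the paper does.
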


\begin{proof}
Proposition~\ref{prop:schur-cubic-control} gives
\[
  -\mathfrak R_N^{\rm Schur}(t)
  \le2\rho_tN\mathfrak C_N(t).
\]
Multiplication by $e^{-t}/\theta_N(t)$ and integration prove the
claim directly from~\eqref{eq:integrated-cubic-control}. No endpoint
error or susceptibility hypothesis is needed for this implication.

To see that \eqref{eq:mixed-cubic-control} suffices, insert it into the left
side of \eqref{eq:integrated-cubic-control} and use
$\chi_N(t)\lesssim\theta_N(t)^{-1}$ and
$\theta_N(t)\asymp t+N^{-1}$ on $[c/\chi,1]$.  The result is bounded by
\[
  \left(\frac{\chi}{N}\right)^{1/2}
  \left(\log\chi+\chi\right)=O_{c,K,C_*}(1).
\]
\end{proof}

\begin{remark}[Scope of the pointwise criterion]\label{rem:FM4-route}
The factor $(\chi/N)^{1/2}$ in \eqref{eq:mixed-cubic-control} has the
scale of an ordinary overlap. This observation does not prove that the
fixed-site factor has that gain under the partial interpolation.
The integrated criterion is weaker and is established in
Proposition~\ref{prop:cubic-from-moments}. The pointwise estimate is
neither proved nor needed for that conclusion.
\end{remark}

\section{Conclusions and outlook}\label{sec:conclusions}

The variance bound can be stated without an asymptotic error or an
arbitrary cutoff. In the off-diagonal convention,
\[
  2N^2\Var F_N^{\rm off}(\beta)
  \le\log\chi_{\rm SG}(N;\beta)
    +(\beta^2-1)\bigl(\chi_{\rm SG}(N;\beta)-1\bigr).
\]
At criticality, Proposition~\ref{prop:exact-critical-gap} strengthens
this inequality to the exact relation
\[
  2N^2\Var F_N^{\rm off}(1)
    =\log\chi_{\rm SG}(N)-\mathcal G_N,
  \qquad \mathcal G_N\ge0.
\]
The incident-edge identity gives a second decomposition, in which
the Schur and interpolation remainders retain their separate meanings.
The cubic estimate of Proposition~\ref{prop:schur-cubic-control}
has no endpoint error: positivity of the correlation matrices makes
the endpoint correction favorable after disorder averaging.

The leading critical variance and susceptibility scales are established
\cite{DuHuang2026,YimingChen2026,DuHuang2026Overlap}.
The reverse Schur comparison in Proposition~\ref{prop:reverse-schur}
and the critical inputs in Proposition~\ref{prop:cubic-from-moments}
give an integrated cubic bound without invoking the final variance
theorem. This quantifies the separate local remainder structure, while
the total remainder bound proves saturation directly. A verification
within the signed-remainder framework itself, avoiding the cavity
expansion, remains open. The finite-size upper inequality itself uses
none of those asymptotic inputs.

Proposition~\ref{prop:limiting-critical-gap} reduces the limit of the
exact integral to the existence of a constant term in the critical
variance. Whenever the limit~\eqref{eq:intro-variance-constant} exists,
\begin{equation}\label{eq:constant-gap-target}
  \mathcal G_N\longrightarrow
  \log C_\chi-2c_{\rm Ising}\ge0.
\end{equation}
The two constants then measure the limiting difference between
the susceptibility bound and the actual variance. The existence and
the value of $c_{\rm Ising}$ are the subject of a companion paper in
preparation~\cite{Tierz2026Constants}. No result of the present paper
depends on them.

A further question is how this total difference is distributed over
disorder perturbations at the critical scale. A sufficient form of
profile convergence can be stated precisely. Put
$L_N=N^{1/3}$ and
\[
  \psi_N(s)=N^{-1/3}\widehat\chi_N(1-sN^{-1/3}),
  \qquad 0\le s\le L_N.
\]
The variable in $\widehat\chi_N$ is the disorder correlation, not the
OU time. The variance identity gives exactly
\[
  2N^2\Var F_N(1)=\int_0^{L_N}\psi_N(s)\,\dd s.
\]
If the functions
\[
  h_N(s)=\mathbf 1_{[0,L_N]}(s)
       \left(\psi_N(s)-\frac1{1+s}\right)
\]
converge in $L^1([0,\infty))$ to some $h$, then
\[
  N^2\Var F_N^{\rm off}(1)-\frac16\log N
  \longrightarrow\frac12\int_0^\infty h(s)\,\dd s-\frac12.
\]
Indeed, the subtracted profile integrates to
$\log(1+L_N)=\tfrac13\log N+o(1)$. The final $-1/2$ removes the
GOE diagonal variance. The overlap limit in~\cite{DuHuang2026Overlap}
identifies $\psi_N(0)\to C_\chi$, but does not by itself establish
this profile or its integrated convergence. If such convergence holds,
the variance constant exists and the limit satisfies
$\int_0^\infty h(s)\,\dd s=1+2c_{\rm Ising}$.
Conversely, existence of the variance constant is equivalent to
convergence of the integrals $\int_0^{L_N}h_N(s)\,\dd s$, whose limit
is then $1+2c_{\rm Ising}$, and does not by itself imply convergence
of the functions $h_N$.

Appendix~\ref{app:temperature-extensions} establishes the variance upper
bound throughout every bounded $N^{-1/3}$ temperature window by extending
the entropy argument of~\cite{YimingChen2026}. The ordinary overlap
moments remain stable on the wider interval $N^{-4/9}$ obtained by
combining the critical linear exponential tail with a quartic estimate
for larger overlaps. This improves the $N^{-2/3}$ interval obtained from
the linear tail alone. It still falls short of the full $N^{-1/3}$
overlap window. A uniform critical exponential moment of
$N^{2/3}R_{12}^2$ would suffice to reach a small interval at that scale.
The larger free-energy fluctuation window discussed
in~\cite[Section~1.4]{DuHuang2026Overlap} is a distinct target. Extensions to nonzero field require replacing the parity
cancellation used here, and should be compared with the distinct
fluctuation results of~\cite{ChenDeyPanchenko2017ExternalField}.
For multi-species models~\cite{BarraContucciMingioneTantari2015,Panchenko2015MSK,CollinsWoodfinLe2025MSK},
one would first need a matrix version of the incident-edge identity
and its remainder sign. We leave these extensions separate from the
finite-size inequalities proved here.

\appendix

\section{Incident-edge and Ornstein--Uhlenbeck identities}\label{app:replicon}

This appendix contains the technical SK inputs used in the main text.  The OU variance representation is the Helffer--Sj\"ostrand covariance formula on Gaussian space~\cite{HelfferSjostrand1994,BakryGentilLedoux2014,Chatterjee2014}. The incident-edge identity is a finite-dimensional application of Price's Gaussian covariance-differentiation formula~\cite{Price1958,Chatterjee2014}. The interpolation monotonicity uses the Hermite--Mehler expansion of correlated Gaussian functions~\cite{Janson1997,Chatterjee2014}, and the Schur estimates use Schur complements and the Hadamard-product theorem~\cite{Schur1911,HornJohnson2013}.

\subsection{Fields coupled by Ornstein--Uhlenbeck interpolation and the variance identity}

The OU variance representation has already been proved in
Theorem~\ref{thm:var-identity}, by applying the Gaussian covariance formula
in underlying independent coordinates and using the chain rule to express it in the field coordinates.  We record only the two formulas used below.  For a mixed
$p$-spin covariance $N\xi(R)$,
\begin{equation}\label{eq:OU-var-identity-appendix}
  \Var F_N(\beta)
  =\frac{\beta^2}{N}\int_0^\infty e^{-t}
    \E\bigl\langle\xi(R_{1,2})\bigr\rangle_t\,\dd t.
\end{equation}
For SK, $\xi(r)=r^2/2$, so
\begin{equation}\label{eq:OU-var-identity-appendix-SK}
  \Var F_N(\beta)
  =\frac{\beta^2}{2N}\int_0^\infty e^{-t}
    \E\bigl\langle R_{1,2}^2\bigr\rangle_t\,\dd t.
\end{equation}

\subsection{Incident-edge identity via Price's theorem and remainder bounds}
\label{subsec:ladder-prop}

\noindent\emph{Purpose.} We prove the direct $Q_e$ incident-edge identity used in the SK theorem.  The proof combines Price's Gaussian covariance-differentiation formula~\cite{Price1958,Chatterjee2014}, the zero-field spin-flip symmetry of the SK Hamiltonian~\cite{SherringtonKirkpatrick1975,Panchenko2013Book}, Schur-complement positivity~\cite[Ch.~0]{HornJohnson2013}, and the Hermite--Mehler monotonicity of the OU semigroup~\cite{Janson1997,Chatterjee2014}.  We also identify the exact finite-$N$ exclusion correction and record the sign and size information on the Schur/interpolation remainders.

\smallskip

\begin{lemma}[Replica parity at $h=0$]\label{lem:parity}
At $h=0$, for any disorder realization and any bounded observable $\mathcal O$
of the four replicas, if $\mathcal O$ is \emph{odd} under the global flip
$\sigma^{(a)}\mapsto -\sigma^{(a)}$ for at least one replica index $a\in\{1,2,3,4\}$,
then $\langle \mathcal O\rangle_t=0$.
\end{lemma}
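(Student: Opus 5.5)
The plan is to exploit the symmetry of the Hamiltonian under a single global spin flip, one replica at a time. Fix $t\ge0$, a disorder realization, and a replica index $a$ for which $\mathcal O$ is odd. The $h=0$ SK Hamiltonian $H_t^{(b)}(\sigma)=\frac{1}{\sqrt N}\sum_{i<j}J^{(b)}_{ij}\sigma_i\sigma_j$ (with the GOE diagonal term spin-independent by Lemma~\ref{lem:GOE-vs-offdiag}) is a quadratic form in $\sigma$. Hence $H_t^{(b)}(-\sigma)=H_t^{(b)}(\sigma)$ for each disorder copy $b\in\{1,2\}$ and every configuration.

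For fixed disorder, the four-replica Gibbs measure is a product measure on $\Sigma_N^4$. Replicas $1,3$ carry weights proportional to $e^{\beta H_t^{(1)}}$, and replicas $2,4$ carry weights proportional to $e^{\beta H_t^{(2)}}$. Write
\[
\langle\mathcal O\rangle_t=\frac{1}{Z}\sum_{\sigma^{(1)},\dots,\sigma^{(4)}}\mathcal O(\sigma^{(1)},\dots,\sigma^{(4)})\prod_{b=1}^4 w_b(\sigma^{(b)}).
\]
Apply the bijective change of variables $\sigma^{(a)}\mapsto-\sigma^{(a)}$ on $\Sigma_N$, leaving the other replicas fixed. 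By the evenness above, $w_a(-\sigma^{(a)})=w_a(\sigma^{(a)})$, so the product weight is invariant. By hypothesis, $\mathcal O$ changes sign. Therefore $\langle\mathcal O\rangle_t=-\langle\mathcal O\rangle_t$, which gives $\langle\mathcal O\rangle_t=0$. Boundedness of $\mathcal O$, together with the finiteness of $\Sigma_N$, ensures that all sums are finite. This argument is also valid if local fields are included only for other replicas. I will not need that case, but I would note that the hypothesis $h=0$ is exactly what makes $w_a$ even.

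There is no real obstacle. The only point needing care is to state precisely what "odd under the flip of replica $a$" means, namely $\mathcal O(\dots,-\sigma^{(a)},\dots)=-\mathcal O(\dots,\sigma^{(a)},\dots)$. One must also check that the two disorder copies and the OU coupling do not break the symmetry. They do not: each $H_t^{(b)}$ is a linear combination of quadratic forms, so evenness holds copy by copy. The statement then holds pointwise in the disorder, and hence also after applying $\E$.
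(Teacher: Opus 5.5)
Your proposal is correct and follows the same route as the paper: evenness of the zero-field Hamiltonian $H_t^{(b)}$ under $\sigma\mapsto-\sigma$ makes each replica's Gibbs weight flip-invariant, so the change of variables $\sigma^{(a)}\mapsto-\sigma^{(a)}$ gives $\langle\mathcal O\rangle_t=-\langle\mathcal O\rangle_t$. Your version also writes out the product-measure sum and checks that the OU coupling preserves evenness in each disorder copy, which the paper leaves implicit.
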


\begin{proof}
The zero-field two-spin SK Hamiltonian $H^{(s)}(\sigma)=N^{-1/2}\sum_{i<j}J^{(s)}_{ij}\sigma_i\sigma_j$ is invariant under the global spin flip $\sigma\mapsto-\sigma$~\cite{SherringtonKirkpatrick1975,Panchenko2013Book}. Hence the Gibbs measure for each replica
is invariant under $\sigma^{(a)}\mapsto-\sigma^{(a)}$. If $\mathcal O$ changes sign
under this flip, $\langle\mathcal O\rangle_t=-\langle\mathcal O\rangle_t=0$.
\end{proof}

\paragraph{Coupling coordinates for the incident-edge identity.}
In the identity proof we use the off-diagonal coupling coordinates, which are the usual SK disorder variables~\cite{SherringtonKirkpatrick1975,Panchenko2013Book}.  By Lemma~\ref{lem:GOE-vs-offdiag}, removing the spin-independent GOE diagonal leaves every Gibbs expectation and overlap quantity unchanged. Its separate contribution to the free-energy variance is the explicit $O(N^{-2})$ shift stated there. Let
\[
  \mathcal E_N:=\{\{u,v\}:1\le u<v\le N\},
  \qquad
  I_j:=\{\{k,j\}:k\ne j\}
\]
be the edge set and the set of edges incident to~$j$. We write $J^1$ and $J^2$ for the two disorder vectors indexed by the graph edges in the coupled systems. Under the original OU pair, the two copies of each edge variable have cross-covariance $\rho_t$ and different edges are independent. We denote expectation for this full OU pair by $\E_{\rho_t}$. For $0\le \lambda\le\rho_t$, the notation $\E_{j,\lambda}$ means that the one-system marginals are unchanged, the cross-covariance is $\lambda$ on edges in $I_j$, remains $\rho_t$ on edges in $\mathcal E_N\setminus I_j$, and all cross-covariances between different edges are zero.

The next lemma applies the Hermite--Mehler expansion on Gaussian space~\cite{Janson1997,Chatterjee2014} while varying the correlations only in the coordinates in $I_j$.

\begin{lemma}[Partial-covariance Hermite monotonicity]
\label{lem:partial-hermite}
Fix $j$ and let $\gamma$ be the standard Gaussian law on $\mathbb R$. Let $f\in L^2(\gamma^{\otimes \mathcal E_N})$ have the orthonormal Hermite expansion
\[
  f(J)=\sum_{\alpha}a_\alpha H_\alpha(J),
  \qquad |\alpha|:=\sum_{e\in\mathcal E_N}\alpha_e,
  \qquad m_j(\alpha):=\sum_{e\in I_j}\alpha_e .
\]
Here $\alpha=(\alpha_e)_{e\in\mathcal E_N}$ ranges over finitely supported multi-indices and $H_\alpha$ is the tensor product of the one-dimensional orthonormal Hermite polynomials.
Then, for $0\le\lambda\le\rho_t$,
\begin{equation}\label{eq:partial-hermite-expansion}
  \E_{j,\lambda}[f(J^1)f(J^2)]
  =\sum_{\alpha}a_\alpha^2\,
  \rho_t^{|\alpha|-m_j(\alpha)}\lambda^{m_j(\alpha)} .
\end{equation}
Consequently $\lambda\mapsto\E_{j,\lambda}[f(J^1)f(J^2)]$ is nondecreasing on
$[0,\rho_t]$, and
\begin{equation}\label{eq:partial-hermite-defect}
\int_0^1\Bigl(
 \E_{j,\rho_t}[f(J^1)f(J^2)]-
 \E_{j,s\rho_t}[f(J^1)f(J^2)]\Bigr)\,ds
\le
\sum_\alpha a_\alpha^2\rho_t^{|\alpha|}m_j(\alpha).
\end{equation}
\end{lemma}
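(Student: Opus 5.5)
The plan is to reduce everything to one-dimensional Mehler computations, coordinate by coordinate. Under $\E_{j,\lambda}$ the pairs $(J^1_e,J^2_e)$, $e\in\mathcal E_N$, are independent across edges. Each pair is a standard bivariate Gaussian whose correlation is $c_e=\lambda$ for $e\in I_j$ and $c_e=\rho_t$ otherwise. For one coordinate, the Mehler formula for orthonormal Hermite polynomials reads
\[
  \E[H_k(X)H_l(Y)]=\delta_{kl}c^k
\]
when $(X,Y)$ has correlation $c\in[0,1]$. By independence across edges this tensorizes to
\[
  \E_{j,\lambda}[H_\alpha(J^1)H_\beta(J^2)]=\delta_{\alpha\beta}\prod_e c_e^{\alpha_e}=\delta_{\alpha\beta}\,\rho_t^{|\alpha|-m_j(\alpha)}\lambda^{m_j(\alpha)}.
\]

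First I would check this identity on finite Hermite sums. Then I would extend it to all of $f\in L^2$ by continuity. The bilinear form $(f,g)\mapsto\E_{j,\lambda}[f(J^1)g(J^2)]$ is bounded by $\|f\|_2\|g\|_2$ by Cauchy--Schwarz, since each marginal is standard Gaussian. So truncating the Hermite expansions and passing to the limit gives \eqref{eq:partial-hermite-expansion}, with the series converging absolutely because its weights are at most $1$ and $\sum_\alpha a_\alpha^2<\infty$.

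Monotonicity is then immediate. Each term $a_\alpha^2\rho_t^{|\alpha|-m_j}\lambda^{m_j}$ is nonnegative and nondecreasing in $\lambda\ge0$, and a sum of such terms inherits both properties.

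For the defect bound \eqref{eq:partial-hermite-defect}, I would substitute $\lambda=s\rho_t$ into the expansion. This gives
\[
  \E_{j,\rho_t}-\E_{j,s\rho_t}=\sum_\alpha a_\alpha^2\rho_t^{|\alpha|}\bigl(1-s^{m_j(\alpha)}\bigr).
\]
The summands are nonnegative, so Tonelli allows me to integrate term by term in $s$:
\[
  \int_0^1(1-s^m)\,ds=\frac{m}{m+1}\le m .
\]
This yields the claimed bound; in fact it gives the slightly sharper weight $m/(m+1)$. The only delicate point in the whole argument is the $L^2$ extension and the justification for interchanging sums, expectations and the $s$-integral. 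Nonnegativity plus absolute convergence handles both, so I expect no real obstacle here.
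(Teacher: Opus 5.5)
Your proposal is correct and follows the paper's proof essentially step by step: the one-dimensional Mehler identity, tensorization over edges with correlation $\lambda$ on $I_j$ and $\rho_t$ elsewhere, monotonicity from the nonnegative coefficients, and termwise integration giving the weight $m_j(\alpha)/(m_j(\alpha)+1)\le m_j(\alpha)$. Your only addition is the explicit $L^2$-continuity (Cauchy--Schwarz) and Tonelli justification for the interchanges, which the paper leaves implicit.
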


\begin{proof}
For a pair of standard Gaussians with correlation $r$, the Hermite--Mehler identity gives $\E[H_m(X)H_n(Y)]=\delta_{mn}r^m$ for orthonormal Hermite polynomials~\cite{Janson1997,Chatterjee2014}. Tensorizing over edges
and using the definition of $\E_{j,\lambda}$ gives
\eqref{eq:partial-hermite-expansion}. Since all coefficients in the resulting
series are nonnegative and $0\le\lambda\le\rho_t$, monotonicity follows. Finally,
termwise integration gives
\[
\int_0^1\Bigl(\rho_t^{|\alpha|}
 -\rho_t^{|\alpha|-m_j(\alpha)}(s\rho_t)^{m_j(\alpha)}\Bigr)ds
=\rho_t^{|\alpha|}\frac{m_j(\alpha)}{m_j(\alpha)+1}
\le \rho_t^{|\alpha|}m_j(\alpha),
\]
and summing over $\alpha$ proves~\eqref{eq:partial-hermite-defect}.
\end{proof}

The following identity is a local application of Price's theorem for Gaussian
inputs~\cite{Price1958}. The same Gaussian interpolation formula is reviewed
in~\cite[Section~2.1]{Chatterjee2014}.

\begin{lemma}[Incident-edge covariance identity via Price's theorem]\label{lem:incident-price}
Fix $j\in[N]$ and $i\ne j$. Let
$G_{uv}(J):=\langle\sigma_u\sigma_v\rangle_J$ denote the single-system
SK two-point function at $\beta=1$, $h=0$.
For $0\le\lambda\le\rho_t$, let $\E_{j,\lambda}$ be the partial-covariance
expectation defined above: the incident edges in $I_j$ have cross-covariance
$\lambda$, the non-incident edges have cross-covariance $\rho_t$, and the
one-system marginals are standard.
Define the Schur-complement response
\[
  \widehat G_{ik}^{(j)}(J)
  :=G_{ik}(J)-G_{ij}(J)G_{kj}(J).
\]
Since $G(J)=(\langle\sigma_u\sigma_v\rangle_J)_{u,v}$ is the (unit-diagonal)
correlation matrix of the spin vector, its principal Schur complement at
site~$j$, $(\widehat G_{ik}^{(j)})_{i,k\ne j}$, is positive semidefinite
and bounded above (in the positive semidefinite order) by the reduced matrix
$G^{\backslash j}=(G_{ik})_{i,k\ne j}$
(see~\cite[Ch.~0]{HornJohnson2013}). Equivalently
$0\preceq \widehat G^{(j)}\preceq G^{\backslash j}$, where $\preceq$ denotes positive semidefinite order.
Then
\begin{equation}\label{eq:price-identity}
  C_{ij}^e(t)
  :=\E_{\rho_t}\bigl[G_{ij}(J^1)G_{ij}(J^2)\bigr]
  =\frac{\rho_t}{N}\sum_{k\ne j}
  \int_0^1
  \E_{j,s\rho_t}
  \bigl[
    \widehat G_{ik}^{(j)}(J^1)\,
    \widehat G_{ik}^{(j)}(J^2)
  \bigr]\,ds.
\end{equation}
\end{lemma}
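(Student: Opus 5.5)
The plan is to write $C^e_{ij}(t)$ as the value at $\lambda=0$ of $\Phi(\lambda):=\E_{j,\lambda}[G_{ij}(J^1)G_{ij}(J^2)]$ plus $\int_0^{\rho_t}\Phi'(\lambda)\,\dd\lambda$. Note that $\E_{j,\rho_t}=\E_{\rho_t}$, so $\Phi(\rho_t)=C^e_{ij}(t)$. The proof then has three steps: show that the endpoint value $\Phi(0)$ vanishes, compute $\Phi'$ by Price's theorem, and substitute $\lambda=s\rho_t$, which produces the prefactor $\rho_t$ and the $\int_0^1\dd s$.

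\emph{Step 1: $\Phi(0)=0$ by a local gauge symmetry.} Under $\E_{j,0}$, the incident variables $(J^1_e)_{e\in I_j}$ of copy $1$ are independent of all other variables and symmetric. Consider the map $J^1_{\{j,k\}}\mapsto -J^1_{\{j,k\}}$ for all $k\neq j$, applied to copy $1$ only. It preserves the law $\E_{j,0}$. It is also equivalent to the spin relabeling $\sigma_j\mapsto-\sigma_j$ in system $1$, since $H(\sigma)$ depends on $J_{jk}$ only through $J_{jk}\sigma_j\sigma_k$ and the counting measure is flip invariant. Under this relabeling $G_{ij}(J^1)\mapsto-G_{ij}(J^1)$ for $i\ne j$, while $G_{ij}(J^2)$ is unchanged. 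Hence $\Phi(0)=-\Phi(0)=0$. This is the step where the special structure is used. I expect it to be the main conceptual point, since it is what removes any boundary term. It is also the reason the identity is restricted to off-diagonal pairs $i\ne j$.

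\emph{Step 2: Price's formula.} For a centered Gaussian vector $(J^1,J^2)$ whose marginals are fixed and whose cross-covariance on the coordinates $e\in I_j$ equals $\lambda$, Price's theorem gives
\[
\Phi'(\lambda)=\sum_{e\in I_j}\E_{j,\lambda}\bigl[\partial_e G_{ij}(J^1)\,\partial_e G_{ij}(J^2)\bigr].
\]
$G_{ij}$ is a bounded smooth function of finitely many Gaussian variables with bounded derivatives, so the formula applies and differentiation under $\E$ is justified. Alternatively, one can differentiate the absolutely convergent expansion \eqref{eq:partial-hermite-expansion} of Lemma~\ref{lem:partial-hermite} termwise in $\lambda$; this gives $\sum_\alpha a_\alpha^2 m_j(\alpha)\rho_t^{|\alpha|-m_j}\lambda^{m_j-1}$. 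Comparing it with the Hermite expansion of the right-hand side, using $\partial_e H_\alpha=\sqrt{\alpha_e}H_{\alpha-\mathbf 1_e}$, gives the same identity without appeal to the external theorem.

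\emph{Step 3: computing the derivative.} With $\beta=1$ and $e=\{j,k\}$, $\partial_{J_{jk}}H=N^{-1/2}\sigma_j\sigma_k$. Hence
\[
\partial_{J_{jk}}\langle\sigma_i\sigma_j\rangle
=N^{-1/2}\bigl(\langle\sigma_i\sigma_j\sigma_j\sigma_k\rangle-\langle\sigma_i\sigma_j\rangle\langle\sigma_j\sigma_k\rangle\bigr)
=N^{-1/2}\,\widehat G^{(j)}_{ik}.
\]
Here $\sigma_j^2=1$ and $G$ is symmetric. The term $k=i$ is included and gives $1-G_{ij}^2$. Inserting this into Step 2 gives
\[
\Phi'(\lambda)=\frac1N\sum_{k\ne j}\E_{j,\lambda}\bigl[\widehat G^{(j)}_{ik}(J^1)\widehat G^{(j)}_{ik}(J^2)\bigr].
\]
Integrating from $0$ to $\rho_t$ with $\lambda=s\rho_t$ and using Step 1 yields \eqref{eq:price-identity}.

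The positive semidefiniteness assertions need only the standard fact about Schur complements. $G$ is the Gibbs second-moment matrix of $\sigma$ and is PSD with $G_{jj}=1$. Its Schur complement at $j$ is $G^{\backslash j}-g g^{\mathsf T}$, where $g=(G_{ij})_{i\ne j}$; this is PSD, and it is $\preceq G^{\backslash j}$ because $gg^{\mathsf T}\succeq0$. This matches the cited facts from \cite[Ch.~0]{HornJohnson2013}.
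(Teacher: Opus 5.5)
Your proposal is correct and follows essentially the same route as the paper: the same function $\Phi(\lambda)=\E_{j,\lambda}[G_{ij}(J^1)G_{ij}(J^2)]$, vanishing at $\lambda=0$ via the sign flip of the copy-$1$ incident edges (equivalently $\sigma_j\mapsto-\sigma_j$), Price's formula on the coordinates in $I_j$, the Gibbs derivative $N^{-1/2}\widehat G^{(j)}_{ik}$, and the substitution $\lambda=s\rho_t$. Your Hermite-expansion derivation of Price's formula and your explicit Schur-complement check of $0\preceq\widehat G^{(j)}\preceq G^{\backslash j}$ are valid additions; the paper only cites these facts.
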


\begin{proof}
At $h=0$, $\langle\sigma_u\rangle_J=0$. With $q_i^e$ as defined in Lemma~\ref{lem:Qe-local-vertex},
\[
\E\langle q_i^e q_j^e\rangle_t
=\E_{\rho_t}[G_{ij}(J^1)G_{ij}(J^2)].
\]
The mixed $(2,4)$ cross terms vanish by replica independence and
zero-field symmetry.

For fixed~$j$, define
$\Phi_{ij}(\lambda):=\E_{j,\lambda}[G_{ij}(J^1)G_{ij}(J^2)]$.
Then $\Phi_{ij}(\rho_t)=C_{ij}^e(t)$.

\emph{Vanishing at $\lambda=0$.}
At $\lambda=0$, the incident edge vectors at~$j$ are independent
between systems. Using the zero-field spin-flip symmetry from Lemma~\ref{lem:parity}, the map
$(J^1_{kj})_{k\ne j}\mapsto -(J^1_{kj})_{k\ne j}$
is equivalent (in distribution, jointly with all other randomness) to the local
site flip $\sigma_j\mapsto-\sigma_j$ in system~$1$, which sends
$G_{ij}(J^1)\mapsto-G_{ij}(J^1)$. Since the system-$1$ incident vector is a centered
Gaussian independent of all other randomness at $\lambda=0$, the conditional
expectation of $G_{ij}(J^1)$ over the system-$1$ incident edges vanishes,
and $\Phi_{ij}(0)=0$.

\emph{Covariance differentiation.}
By the Gaussian covariance differentiation formula of
Price~\cite{Price1958} (see also~\cite[Section~2.1]{Chatterjee2014}), applied to
the cross-covariance parameter $\lambda$ while marginal variances are
held fixed,
\[
  \Phi_{ij}'(\lambda)
  =\sum_{k\ne j}
  \E_{j,\lambda}\!\left[
    \frac{\partial G_{ij}(J^1)}{\partial J^1_{kj}}
    \,
    \frac{\partial G_{ij}(J^2)}{\partial J^2_{kj}}
  \right].
\]
The single-system derivative is the Gibbs covariance derivative (see, e.g.,~\cite[Ch.~1]{Panchenko2013Book}): since
$(\sigma_i\sigma_j)(\sigma_k\sigma_j)=\sigma_i\sigma_k$ for Ising spins,
\begin{equation}\label{eq:single-system-deriv}
  \frac{\partial G_{ij}(J)}{\partial J_{kj}}
  =\frac{\beta}{\sqrt N}\,
  \Cov_J(\sigma_i\sigma_j,\,\sigma_k\sigma_j)
  =\frac{\beta}{\sqrt N}\,
  \widehat G_{ik}^{(j)}(J).
\end{equation}
At $\beta_c=1$ the prefactor is $1/\sqrt{N}$.
Substituting~\eqref{eq:single-system-deriv} into $\Phi_{ij}'(\lambda)$
and integrating from~$0$ to~$\rho_t$ with the change of variables
$\lambda=s\rho_t$ yields~\eqref{eq:price-identity}.
\end{proof}

Before stating the summed identity, we define the two per-pair remainders that appear in it. For $i\ne j$, write $G_{uv}^a=G_{uv}(J^a)$ and $\widehat G_{ik}^{(j),a}=\widehat G_{ik}^{(j)}(J^a)$, where the Schur-complement response $\widehat G^{(j)}$ was defined in Lemma~\ref{lem:incident-price}. Set
\begin{align}
  \mathcal R_{ij}^{\mathrm{Schur}}(t)
  &:=\frac{\rho_t}{N}\sum_{k\ne j}
  \int_0^1\E_{j,s\rho_t}\!\Bigl[
    \widehat G_{ik}^{(j),1}\widehat G_{ik}^{(j),2}
    -G_{ik}^1 G_{ik}^2
  \Bigr]ds,
  \label{eq:R-schur}
  \\
  \mathcal R_{ij}^{\mathrm{interp}}(t)
  &:=\frac{\rho_t}{N}\sum_{k\ne j}
  \int_0^1\Bigl(
    \E_{j,s\rho_t}[G_{ik}^1 G_{ik}^2]
    -\E_{\rho_t}[G_{ik}^1 G_{ik}^2]
  \Bigr)ds.
  \label{eq:R-interp}
\end{align}
The Schur remainder is the error made by replacing the full two-point matrix $G^{\backslash j}$ by the Schur-complement response at the fixed site $j$. The interpolation remainder is the error made by averaging the kernel term over partial incident-edge covariance $s\rho_t$ rather than evaluating it at the full OU covariance $\rho_t$.

The next proposition is the source of the identity used in the main text. The symbols with calligraphic $\mathcal R$ are per ordered pair $(i,j)$ with $i\ne j$. The symbols with fraktur $\mathfrak R$ are the normalized sums over all such pairs. This distinction is needed because the partial-covariance expectation depends on the fixed incident site $j$. The proposition is obtained from Lemma~\ref{lem:incident-price} by summing over vertices and keeping the finite-$N$ exclusions exactly. The Gaussian input is Price's covariance-differentiation formula~\cite{Price1958,Chatterjee2014}.

\begin{proposition}[Direct $Q_e$ incident-edge identity]\label{prop:Qe-direct-ladder}
At $\beta_c=1$ and $h=0$, set
\begin{equation}\label{eq:global-remainders}
  \mathfrak R_N^{\mathrm{Schur}}(t)
  :=\frac1N\sum_{j=1}^N\sum_{\substack{i=1\\ i\ne j}}^N
      \mathcal R_{ij}^{\mathrm{Schur}}(t),
  \qquad
  \mathfrak R_N^{\mathrm{interp}}(t)
  :=\frac1N\sum_{j=1}^N\sum_{\substack{i=1\\ i\ne j}}^N
      \mathcal R_{ij}^{\mathrm{interp}}(t).
\end{equation}
Then the OU overlap susceptibility $\chi_N(t)=N\,\E\langle Q_e^2\rangle_t$
satisfies the exact finite-$N$ identity
\begin{equation}\label{eq:Qe-exact-identity}
  \Bigl(1-\rho_t+\frac{2\rho_t}{N}\Bigr)\chi_N(t)
  =1+\frac{\rho_t}{N}
   +\mathfrak R_N^{\mathrm{Schur}}(t)
   +\mathfrak R_N^{\mathrm{interp}}(t),
\end{equation}
where the per-pair remainders are defined in
\eqref{eq:R-schur}--\eqref{eq:R-interp}.
\end{proposition}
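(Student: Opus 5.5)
The plan is to sum the per-pair Price identity of Lemma~\ref{lem:incident-price} over all ordered pairs $(i,j)$, $i\ne j$, and to add back the diagonal contributions exactly. First expand
\[
  \chi_N(t)=N\,\E\langle R_{1,2}^2\rangle_t=\frac1N\sum_{i,j}\E_{\rho_t}\bigl[G_{ij}(J^1)G_{ij}(J^2)\bigr]
  =\frac1N\sum_{j}C_{jj}^e+\frac1N\sum_j\sum_{i\ne j}C_{ij}^e(t).
\]
Here I use that $\langle\sigma_i^{(1)}\sigma_j^{(1)}\sigma_i^{(2)}\sigma_j^{(2)}\rangle_t$ factorizes as $G_{ij}(J^1)G_{ij}(J^2)$. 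The diagonal terms satisfy $C_{jj}^e=G_{jj}^1G_{jj}^2=1$, consistent with \eqref{eq:vertex-one}, so they contribute exactly $1$. It is also convenient to write $S(t):=\frac1N\sum_{i,k}\E_{\rho_t}[G_{ik}^1G_{ik}^2]=\chi_N(t)$.

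Next, for each off-diagonal pair I insert \eqref{eq:price-identity}. The definitions \eqref{eq:R-schur}--\eqref{eq:R-interp} are designed as telescoping corrections. Writing $\widehat G^1\widehat G^2=G^1G^2+(\widehat G^1\widehat G^2-G^1G^2)$ and $\E_{j,s\rho_t}=\E_{\rho_t}+(\E_{j,s\rho_t}-\E_{\rho_t})$ gives
\[
  C_{ij}^e(t)=\frac{\rho_t}{N}\sum_{k\ne j}\E_{\rho_t}[G_{ik}^1G_{ik}^2]+\mathcal R_{ij}^{\rm Schur}(t)+\mathcal R_{ij}^{\rm interp}(t).
\]
The $s$-integral of the leading term is trivial. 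Summing with the prefactor $1/N$ produces $\mathfrak R_N^{\rm Schur}+\mathfrak R_N^{\rm interp}$ together with the main term
\[
  \frac{\rho_t}{N^2}\sum_j\sum_{i\ne j}\sum_{k\ne j}\E_{\rho_t}[G_{ik}^1G_{ik}^2].
\]

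The only real bookkeeping is this exclusion count, and it is where the $2\rho_t/N$ and $\rho_t/N$ corrections must come out exactly. For a fixed pair $(i,k)$, the number of $j$ with $j\ne i$ and $j\ne k$ is $N-2$ if $i\ne k$ and $N-1$ if $i=k$. With $\E_{\rho_t}[G_{ii}^1G_{ii}^2]=1$, the triple sum therefore equals
\[
  (N-2)\Bigl(\sum_{i,k}\E_{\rho_t}[G^1_{ik}G^2_{ik}]-N\Bigr)+(N-1)N=(N-2)N\chi_N(t)+N.
\]
So the main term is $\rho_t(1-2/N)\chi_N(t)+\rho_t/N$.

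Collecting everything gives
\[
  \chi_N(t)=1+\rho_t\Bigl(1-\frac2N\Bigr)\chi_N(t)+\frac{\rho_t}{N}+\mathfrak R_N^{\rm Schur}+\mathfrak R_N^{\rm interp}.
\]
Moving the $\chi_N$ term to the left yields \eqref{eq:Qe-exact-identity}. I expect no analytic obstacle, since all sums are finite and the Price identity is already established. The main care point is to check that the Schur response and the partial expectation $\E_{j,s\rho_t}$ depend on $j$ only through the definitions already absorbed in the remainders, so that the leading term is evaluated under the full $\E_{\rho_t}$ and the count above applies.
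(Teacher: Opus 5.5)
Your proposal is correct and follows essentially the same route as the paper: the same telescoping of the Price identity into the kernel term under $\E_{\rho_t}$ plus $\mathcal R_{ij}^{\rm Schur}+\mathcal R_{ij}^{\rm interp}$, followed by exact exclusion bookkeeping. Your count of admissible $j$ for each fixed $(i,k)$ is equivalent to the paper's inclusion--exclusion $\sum_{i\ne j}\sum_{k\ne j}C_{ik}^e=T-2S_j+1$, and both give $\rho_t(1-2/N)\chi_N(t)+\rho_t/N$.
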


\begin{proof}
The diagonal contribution to $N\E\langle Q_e^2\rangle_t$ is exactly $1$
(Lemma~\ref{lem:cross-only-vertex}). For $i\ne j$, the incident-edge identity
(Lemma~\ref{lem:incident-price}) gives the exact finite-$N$ identity
\[
  C_{ij}^e(t)
  =\frac{\rho_t}{N}\sum_{k\ne j}
  \int_0^1\E_{j,s\rho_t}\!\bigl[
    \widehat G_{ik}^{(j)}(J^1)\,
    \widehat G_{ik}^{(j)}(J^2)
  \bigr]\,ds.
\]

\smallskip\noindent\textbf{Schur/interpolation decomposition.}
In this proof $G_{uv}^a$ abbreviates $G_{uv}(J^a)$ for $a=1,2$. Expanding $\widehat G^{(j),1}_{ik}\,\widehat G^{(j),2}_{ik}
=(G_{ik}^1-G_{ij}^1G_{kj}^1)(G_{ik}^2-G_{ij}^2G_{kj}^2)$
produces four terms:
\begin{equation}\label{eq:schur-expand}
  \widehat G^{(j),1}_{ik}\,\widehat G^{(j),2}_{ik}
  = G_{ik}^1 G_{ik}^2
  - G_{ik}^1 G_{ij}^2 G_{kj}^2
  - G_{ik}^2 G_{ij}^1 G_{kj}^1
  + G_{ij}^1 G_{kj}^1 G_{ij}^2 G_{kj}^2.
\end{equation}
Hence
\[
  C_{ij}^e=\frac{\rho_t}{N}\sum_{k\ne j}C_{ik}^e
  +\mathcal R_{ij}^{\mathrm{Schur}}(t)
  +\mathcal R_{ij}^{\mathrm{interp}}(t),
\]
with the two remainders defined in~\eqref{eq:R-schur}--\eqref{eq:R-interp}. The Schur remainder $\mathcal R^{\mathrm{Schur}}$ contains the cubic and quartic correlation terms from~\eqref{eq:schur-expand}. The interpolation remainder $\mathcal R^{\mathrm{interp}}$ measures the difference between the expectation of the kernel term averaged over $s$ and its expectation at covariance $\rho_t$ on every edge. Both are exact terms of the identity, not truncation errors.

\smallskip\noindent\textbf{Summation with the fixed site excluded.}
Summing the exact identity over $i\ne j$ for fixed~$j$, then over~$j$,
and dividing by $N$: the kernel term $\frac{\rho_t}{N}\sum_{k\ne j}
C_{ik}^e$ excludes $k=j$. Write $T:=\sum_{i,k}C_{ik}^e=N\chi_N(t)$ and
$S_j:=\sum_i C_{ij}^e$. Then
$\sum_{i\ne j}\sum_{k\ne j}C_{ik}^e = T-2S_j+C_{jj}^e = T-2S_j+1$.
Summing over $j$ and dividing by $N^2$ gives
$\frac{\rho_t}{N^2}\sum_j(T-2S_j+1) = \rho_t(1-\frac{2}{N})\chi_N(t)+\frac{\rho_t}{N}$.
Adding the diagonal contribution $C_{jj}^e=1$ and the summed remainder, the
exact identity is~\eqref{eq:Qe-exact-identity}.
\end{proof}

\begin{remark}[Global remainder structure]\label{rem:global-remainder-structure}
The Schur part of the global remainder comes from summing the cubic and quartic terms in~\eqref{eq:schur-expand}. Suppressing superscripts and the fixed parameters $j,s$ only in this explanatory sentence, these terms have the schematic forms
\[
  \frac{\rho_t}{N^2}\sum_{i,j,k}
  \E_{j,s\rho_t}[G_{ik}G_{ij}G_{kj}]
\]
for the cubic contribution, and
\[
  \frac{\rho_t}{N^2}\sum_{i,j,k}
  \E_{j,s\rho_t}[G_{ij}G_{kj}G_{ij}G_{kj}]
\]
for the quartic contribution. The cubic term has the mixed-overlap structure isolated below. The quartic term has the favorable sign after global summation by the Schur product theorem. See Lemma~\ref{lem:remainder-signs}.
The interpolation remainder is controlled by Price's theorem~\cite{Price1958} applied to the incident edges, or equivalently by the Hermite--Mehler monotonicity in Lemma~\ref{lem:partial-hermite}, producing a second-order correction in the cross-covariance.
Lemma~\ref{lem:interp-integrated-app} controls the interpolation
remainder after integration. The integrated Schur bound
\eqref{eq:ass-SK-narrow} is proved in
Proposition~\ref{prop:cubic-from-moments} by estimating the mixed cubic
contribution. No pointwise interpolation bound of order $N^{-1}$ is
needed for that verification. The same proposition also bounds the
total integrated remainder directly through the exact identity.
The weaker integrated bound~\eqref{eq:Qe-integrated-remainder} is enough for the variance formula.
For the tail $t\ge1$, the denominator is uniformly bounded away from zero because $1-\rho_t\ge1-e^{-1}$.
\end{remark}

\begin{remark}[Global remainder notation and pointwise strengthening]
\label{rem:global-remainder-notation}
Write
\[
  \mathfrak R_N(t)
  :=\mathfrak R_N^{\mathrm{Schur}}(t)
    +\mathfrak R_N^{\mathrm{interp}}(t),
  \qquad
  \theta_N(t):=1-\rho_t+\frac{2\rho_t}{N}.
\]
The term $\rho_t/N$ in \eqref{eq:Qe-exact-identity} is an explicit
finite-$N$ numerator correction, not part of the signed remainder.  The
variance theorem requires only
\begin{equation}\label{eq:Qe-integrated-remainder}
  \int_{c/\chi_{\mathrm{SG}}(N)}^1 e^{-t}
  \frac{-\mathfrak R_N(t)}{\theta_N(t)}\,\dd t=O_c(1),
\end{equation}
which becomes Assumption~\ref{ass:SK-remainder-closure} after the sign lemma.
The stronger pointwise estimate
\begin{equation}\label{eq:Qe-direct-ladder}
  |\mathfrak R_N(t)|\le C(c)/N
  \qquad\text{for }t\in[c/\chi_{\mathrm{SG}}(N),1]
\end{equation}
would imply \eqref{eq:Qe-integrated-remainder} and
$\theta_N(t)\chi_N(t)=1+O(N^{-1})$. As shown in
Remark~\ref{cor:scalar-ladder}, it cannot hold uniformly on this
window at criticality for any fixed $c>1$. The main results require
only the integrated bound, verified in
Proposition~\ref{prop:cubic-from-moments}.  For $t\ge1$, Corollary~\ref{cor:upper-profile} gives a uniformly
bounded tail contribution.
\end{remark}

The Schur sign below uses Schur complements and the Schur product theorem~\cite{Schur1911,HornJohnson2013}. The interpolation sign uses the Hermite--Mehler expansion in Lemma~\ref{lem:partial-hermite}.

\begin{lemma}[Signs of the Schur and interpolation remainders]\label{lem:remainder-signs}
For every $t\ge 0$,
\[
  \mathfrak R_N^{\mathrm{Schur}}(t)\le 0,
  \qquad
  \mathfrak R_N^{\mathrm{interp}}(t)\le 0.
\]
\end{lemma}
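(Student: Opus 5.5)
The plan treats the two signs separately, starting with the interpolation remainder, which is the easier one.

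\emph{Interpolation sign.} For fixed $i,k$ I apply Lemma~\ref{lem:partial-hermite} to $f=G_{ik}$. It says $\lambda\mapsto\E_{j,\lambda}[G_{ik}^1G_{ik}^2]$ is nondecreasing on $[0,\rho_t]$. The value at $\lambda=\rho_t$ is exactly $\E_{\rho_t}[G_{ik}^1G_{ik}^2]$, since then every edge has cross-covariance $\rho_t$. Hence each integrand in~\eqref{eq:R-interp} is $\le0$ for every $s\in[0,1]$. So each per-pair term $\mathcal R_{ij}^{\rm interp}(t)$ is nonpositive, and the global sum is as well. No global summation is needed for this sign.

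\emph{Schur sign.} Here I expect cancellation to happen only after summing over $i$ and $k$, with $j$ and $s$ held fixed. For fixed $j$ and a fixed disorder pair, the summand of $\sum_{i\ne j}\sum_{k\ne j}\bigl[\widehat G^{(j),1}_{ik}\widehat G^{(j),2}_{ik}-G^1_{ik}G^2_{ik}\bigr]$ is $\mathbf 1^\top(\widehat A\circ\widehat B-A\circ B)\mathbf 1$. Here $A=G^{\backslash j}(J^1)$, $B=G^{\backslash j}(J^2)$, $\widehat A=\widehat G^{(j)}(J^1)$, $\widehat B=\widehat G^{(j)}(J^2)$, and $\circ$ is the Hadamard product. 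Lemma~\ref{lem:incident-price} gives $0\preceq\widehat A\preceq A$ and $0\preceq\widehat B\preceq B$. I then write
\[
A\circ B-\widehat A\circ\widehat B=(A-\widehat A)\circ B+\widehat A\circ(B-\widehat B).
\]
Each term is a Hadamard product of two positive semidefinite matrices, so it is positive semidefinite by the Schur product theorem. It follows that $\mathbf 1^\top(\widehat A\circ\widehat B-A\circ B)\mathbf 1\le0$ pointwise in $(J^1,J^2)$.

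Taking $\E_{j,s\rho_t}$, integrating in $s$, multiplying by $\rho_t/N\ge0$ and summing over $j$ then gives $\mathfrak R_N^{\rm Schur}(t)\le0$. The summation order must match~\eqref{eq:global-remainders}: for fixed $j$, the $(i,k)$ sum covers all $i\ne j$ and all $k\ne j$, which is exactly the full quadratic form of the reduced matrices.

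\emph{Main obstacle.} The step I expect to need care is the Schur sign, because the individual per-pair terms $\mathcal R^{\rm Schur}_{ij}$ need not be negative. The argument requires that, for a fixed $j$, both systems are evaluated with the same fixed excluded site. It also requires that the Schur complement ordering $A-\widehat A=(G_{ij}G_{kj})_{i,k}\succeq0$ holds; this is a rank-one positive semidefinite matrix $vv^\top$ with $v=(G_{ij})_{i\ne j}$. With these two facts the decomposition above closes the argument.
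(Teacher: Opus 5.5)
Your proposal is correct and follows the paper's proof. The interpolation sign follows termwise from the monotonicity in Lemma~\ref{lem:partial-hermite}. The Schur sign comes from the same splitting $A\circ B-\widehat A\circ\widehat B=(A-\widehat A)\circ B+\widehat A\circ(B-\widehat B)\succeq0$, tested against the all-ones vector at fixed $j$ before summing over $j$ and integrating in $s$; your identification of $A-\widehat A=vv^\top$ as rank one is a useful explicit detail that the paper leaves to the cited Schur-complement facts.
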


\begin{proof}
\emph{Schur.} By Lemma~\ref{lem:incident-price}, for each disorder realization and fixed $j$,
\[
  0\preceq A_\ell:=\widehat G^{(j),\ell}\preceq B_\ell:=G^{\ell,\backslash j},
  \qquad \ell=1,2.
\]
Here $\circ$ denotes entrywise (Hadamard) product. Then
\[
  B_1\circ B_2-A_1\circ A_2
  =(B_1-A_1)\circ B_2+A_1\circ(B_2-A_2)\succeq0
\]
by the Schur product theorem for Hadamard products~\cite{Schur1911,HornJohnson2013}. Taking the all-ones quadratic form gives
\[
  \sum_{i,k\ne j}\widehat G_{ik}^{(j),1}\widehat G_{ik}^{(j),2}
  \le
  \sum_{i,k\ne j}G_{ik}^1G_{ik}^2.
\]
After summing over $j$ and integrating in $s$, this gives the global sign
$\mathfrak R_N^{\mathrm{Schur}}(t)\le0$. No pointwise sign for each individual per-pair remainder $\mathcal R_{ij}^{\mathrm{Schur}}$ is claimed.

\emph{Interpolation.}
Apply Lemma~\ref{lem:partial-hermite} with $f(J)=G_{ik}(J)$. Since
$0\le s\rho_t\le\rho_t$, the partial-covariance expectation
$\E_{j,s\rho_t}[G_{ik}(J^1)G_{ik}(J^2)]$ is no larger than the full OU
expectation $\E_{j,\rho_t}[G_{ik}(J^1)G_{ik}(J^2)]=\E_{\rho_t}[G_{ik}^1G_{ik}^2]$.
Thus every summand in~\eqref{eq:R-interp} is nonpositive, and
\eqref{eq:global-remainders} gives $\mathfrak R_N^{\mathrm{interp}}\le0$.
\end{proof}

\begin{corollary}[Unconditional pointwise upper bound]\label{cor:upper-profile}
For every $t\ge 0$ and every $N\ge 2$,
\begin{equation}\label{eq:upper-profile}
  \chi_N(t)\;\le\;\frac{1+\rho_t/N}{\theta_N(t)},
  \qquad
  \theta_N(t)=1-\rho_t+\frac{2\rho_t}{N}.
\end{equation}
\end{corollary}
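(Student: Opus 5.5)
The bound follows directly from the exact identity of Proposition~\ref{prop:Qe-direct-ladder} together with the sign information of Lemma~\ref{lem:remainder-signs}. Fix $t\ge0$ and $N\ge2$. By Lemma~\ref{lem:remainder-signs}, both $\mathfrak R_N^{\rm Schur}(t)\le0$ and $\mathfrak R_N^{\rm interp}(t)\le0$. Dropping them from the right-hand side of \eqref{eq:Qe-exact-identity} gives
\[
  \theta_N(t)\,\chi_N(t)\le 1+\frac{\rho_t}{N}.
\]

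It remains to divide by $\theta_N(t)$, which requires $\theta_N(t)>0$. This is where the restriction $N\ge2$ enters. Write
\[
  \theta_N(t)=1-\rho_t+\frac{2\rho_t}{N}=1-\rho_t\Bigl(1-\frac2N\Bigr).
\]
For $N\ge2$ we have $0\le 1-2/N<1$ and $\rho_t\in(0,1]$, so $\theta_N(t)\ge 2/N>0$. In particular, the endpoint $t=0$, where $\rho_0=1$, is harmless: there $\theta_N(0)=2/N$ and the bound reads $\chi_{\rm SG}(N)\le (N+1)/2$. Dividing therefore yields \eqref{eq:upper-profile}.

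There is no real obstacle here. All of the substance sits in the exact identity and in the two global sign statements, both already proved. The only point needing care is positivity of the denominator uniformly in $t$, including $t=0$, which the computation above handles. The case $N=1$ is excluded because $\theta_1(t)=1+\rho_t$ still works, but the identity's edge sums are empty; the statement simply restricts to $N\ge2$.
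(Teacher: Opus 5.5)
Your proof is correct and is exactly the paper's argument: drop the two nonpositive remainders (Lemma~\ref{lem:remainder-signs}) from the exact identity \eqref{eq:Qe-exact-identity}, then divide by $\theta_N(t)=1-\rho_t(1-2/N)\ge 2/N>0$. One small correction to your side remark: positivity of $\theta_N$ is not really where $N\ge2$ is needed, since $\theta_1(t)=1+\rho_t>0$ and at $N=1$ the identity and the bound both hold with equality ($\chi_1\equiv1$); the restriction to $N\ge2$ is a convention of the statement, not something positivity forces.
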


\begin{proof}
Combine~\eqref{eq:Qe-exact-identity} with Lemma~\ref{lem:remainder-signs}
and divide by $\theta_N(t)>0$.
\end{proof}

\begin{remark}[Endpoint consistency bound]\label{rem:endpoint-sanity}
Setting $t=0$ in Corollary~\ref{cor:upper-profile} gives the finite-$N$
check
\[
  \chi_{\rm SG}(N)\le
  \frac{1+1/N}{2/N}=\frac{N+1}{2}.
\]
This slightly improves the trivial bound $\chi_{\rm SG}(N)\le N$ and is
consistent with the sign and normalization of the exact identity.
\end{remark}

\begin{lemma}[Pointwise bound for the interpolation remainder]\label{lem:interp-pointwise-app}
For every $t\ge 0$,
\begin{equation}\label{eq:interp-pointwise}
  0\;\ge\;\mathfrak R_N^{\mathrm{interp}}(t)
  \;\ge\; -\,\frac{2\rho_t^{2}}{N}\,
  \frac{\partial \widehat\chi_N}{\partial \rho}\bigg|_{\rho=\rho_t},
\end{equation}
where $\widehat\chi_N(\rho)$ denotes the same coupled susceptibility viewed as a function of the Hamiltonian correlation parameter $\rho$ rather than of OU time $t$. Thus $\chi_N(t)=\widehat\chi_N(e^{-t})$, and
\[
  \widehat\chi_N(\rho):=N^{-1}\sum_{i,k}\sum_\alpha
  a_{ik,\alpha}^{2}\,\rho^{|\alpha|}.
\]
Here $a_{ik,\alpha}$ are the Hermite coefficients of the two-point function $G_{ik}$ as a function of the Gaussian disorder. This is the Hermite-series representation of $\chi_N$ as a function of the OU correlation parameter~\cite{Janson1997,Chatterjee2014}.
\end{lemma}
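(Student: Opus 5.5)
The plan is to obtain the upper inequality $\mathfrak R_N^{\rm interp}(t)\le 0$ directly from Lemma~\ref{lem:remainder-signs}, and to get the lower bound by combining the termwise defect estimate \eqref{eq:partial-hermite-defect} of Lemma~\ref{lem:partial-hermite} with a counting identity for incident edges. Write $\rho=\rho_t$. By \eqref{eq:R-interp} and \eqref{eq:global-remainders},
\[
  -\mathfrak R_N^{\rm interp}(t)
  =\frac{\rho}{N^2}\sum_{j=1}^N\sum_{i\ne j}\sum_{k\ne j}
  \int_0^1\Bigl(\E_{\rho}[G_{ik}^1G_{ik}^2]-\E_{j,s\rho}[G_{ik}^1G_{ik}^2]\Bigr)\,\dd s .
\]
Since $\E_{j,\rho}=\E_\rho$, each integrand is nonnegative by the monotonicity part of Lemma~\ref{lem:partial-hermite} applied to $f=G_{ik}$. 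Hence I may enlarge the index set from $i,k\ne j$ to all $i,k\in[N]$ at no cost in the direction of the inequality. The diagonal terms $i=k$ contribute zero in any case, because $G_{ii}\equiv1$ has only the $\alpha=0$ Hermite coefficient.

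Next apply \eqref{eq:partial-hermite-defect} with $f=G_{ik}$ and Hermite coefficients $a_{ik,\alpha}$. This bounds each $s$-integral by $\sum_\alpha a_{ik,\alpha}^2\rho^{|\alpha|}m_j(\alpha)$. Summing over $j$ is the key combinatorial step. Every edge $e=\{u,v\}\in\mathcal E_N$ lies in exactly two incident sets, $I_u$ and $I_v$, so
\[
  \sum_{j=1}^N m_j(\alpha)=\sum_{e}\alpha_e\,\#\{j:e\in I_j\}=2|\alpha| .
\]
Therefore
\[
  -\mathfrak R_N^{\rm interp}(t)\le\frac{2\rho}{N^2}\sum_{i,k}\sum_\alpha a_{ik,\alpha}^2\,|\alpha|\,\rho^{|\alpha|}.
\]

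Finally, the Hermite series for $\widehat\chi_N$ stated in the lemma (the Hermite--Mehler representation from Lemma~\ref{lem:chi-CS}) converges absolutely with nonnegative terms. It may therefore be differentiated termwise for $\rho\in[0,1)$, which gives
\[
  \rho\,\partial_\rho\widehat\chi_N(\rho)=N^{-1}\sum_{i,k}\sum_\alpha a_{ik,\alpha}^2|\alpha|\rho^{|\alpha|}.
\]
Substituting this yields $-\mathfrak R_N^{\rm interp}(t)\le 2\rho^2N^{-1}\partial_\rho\widehat\chi_N(\rho)$, which is \eqref{eq:interp-pointwise}.

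The only delicate point is the differentiation at $\rho=1$, that is, at $t=0$. There I would either read the derivative as the (possibly infinite) monotone limit of the termwise series, or note that the state space is finite: $G_{ik}$ is then smooth with Gaussian-integrable derivatives, so $\sum_\alpha|\alpha|a_{ik,\alpha}^2=\E|\nabla G_{ik}|^2<\infty$ and the series converges at $\rho=1$ as well. The remaining steps, namely interchanging the sums over $j$, $i$, $k$ and $\alpha$ with the $s$-integral, are justified by nonnegativity (Tonelli).
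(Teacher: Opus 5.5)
Your proposal is correct and matches the paper's proof: both apply the defect bound \eqref{eq:partial-hermite-defect} to $f=G_{ik}$, use $\sum_j m_j(\alpha)=2|\alpha|$, enlarge the sums over $i,k\ne j$ by nonnegativity, identify the result with $\frac{2\rho_t^2}{N}\partial_\rho\widehat\chi_N(\rho_t)$, and take the upper sign from Lemma~\ref{lem:remainder-signs}. Your remarks on the $i=k$ terms and on the one-sided derivative at $\rho=1$ (via $\sum_\alpha|\alpha|a_{ik,\alpha}^2=\E|\nabla G_{ik}|^2<\infty$) are valid refinements of points the paper leaves implicit.
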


\begin{proof}
Apply the defect bound~\eqref{eq:partial-hermite-defect} of
Lemma~\ref{lem:partial-hermite} to $f(J)=G_{ik}(J)$:
\[
  \int_0^1\bigl(\E_{\rho_t}[G_{ik}^{1}G_{ik}^{2}]-
   \E_{j,s\rho_t}[G_{ik}^{1}G_{ik}^{2}]\bigr)\,ds
  \le \sum_\alpha a_{ik,\alpha}^{2}\rho_t^{|\alpha|}m_j(\alpha).
\]
Summing over~$j$, each edge contributes to exactly two endpoint sets, so
$\sum_{j=1}^{N}m_j(\alpha)=2|\alpha|$. Then summing over $i,k$ (extending the
sums over $i\ne j$ and $k\ne j$ to all pairs only adds nonnegative defect
terms) and using the
factor $\rho_t/N^2$ in the global definition of
$\mathfrak R_N^{\mathrm{interp}}$ gives
\[
  -\,\mathfrak R_N^{\mathrm{interp}}(t)
   \le \frac{2\rho_t}{N^{2}}\sum_{i,k}\sum_\alpha
       |\alpha|\,a_{ik,\alpha}^{2}\rho_t^{|\alpha|}
   =\frac{2\rho_t^{2}}{N}\,
       \frac{\partial \widehat\chi_N}{\partial \rho}(\rho_t),
\]
because $\widehat\chi_N(\rho)=N^{-1}\sum_{i,k}\sum_\alpha
 a_{ik,\alpha}^{2}\rho^{|\alpha|}$. The inequality
$\mathfrak R_N^{\mathrm{interp}}\le0$ is Lemma~\ref{lem:remainder-signs}.
\end{proof}

\begin{lemma}[Integrated bound for the interpolation remainder]\label{lem:interp-integrated-app}
Unconditionally, for every fixed $c>0$,
\begin{equation}\label{eq:interp-integrated}
  \int_{c/\chi_{\mathrm{SG}}(N)}^{1}
    e^{-t}\,\frac{|\mathfrak R_N^{\mathrm{interp}}(t)|}{\theta_N(t)}\,\dd t
  \;\le\;\frac{C_c}{N}\,\chi_{\mathrm{SG}}(N)^{2}.
\end{equation}
\end{lemma}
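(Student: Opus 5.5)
The plan is to start from the pointwise bound of Lemma~\ref{lem:interp-pointwise-app}, $|\mathfrak R_N^{\rm interp}(t)|\le 2\rho_t^2N^{-1}\,\partial_\rho\widehat\chi_N(\rho_t)$, and integrate it against $e^{-t}/\theta_N(t)$ on $[a,1]$, with $a=c/\chi_{\rm SG}(N)$. If $a\ge1$ the interval is empty and there is nothing to prove, so assume $a<1$. Substitute $\rho=e^{-t}$, so $\dd\rho=-e^{-t}\dd t$. The integral becomes
\[
  \frac{2}{N}\int_{e^{-1}}^{e^{-a}}
  \frac{\rho^2\,\widehat\chi_N'(\rho)}{1-\rho+2\rho/N}\,\dd\rho
  \le\frac{2}{N}\cdot\frac{1}{1-e^{-a}}
  \int_{e^{-1}}^{e^{-a}}\widehat\chi_N'(\rho)\,\dd\rho .
\]
Here I use $\rho\le1$ and $\theta\ge1-\rho\ge1-e^{-a}$ on the range.

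The remaining integral telescopes. By the Hermite series in Lemma~\ref{lem:chi-CS}, $\widehat\chi_N$ is a power series with nonnegative coefficients, so $\widehat\chi_N'\ge0$. Hence
\[
  \int_{e^{-1}}^{e^{-a}}\widehat\chi_N'(\rho)\,\dd\rho
  =\widehat\chi_N(e^{-a})-\widehat\chi_N(e^{-1})
  \le\widehat\chi_N(1)=\chi_{\rm SG}(N).
\]
The prefactor satisfies $1/(1-e^{-a})\le 2/a=2\chi_{\rm SG}(N)/c$, since $1-e^{-x}\ge x/2$ for $x\in(0,1]$. Combining the two gives the bound $4\chi_{\rm SG}(N)^2/(cN)$, so one can take $C_c=4/c$.

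The only delicate point is exchanging differentiation and summation in the Hermite series up to $\rho<1$. This is justified because the series of $\widehat\chi_N$ converges at $\rho=1$ with nonnegative terms. A power series with nonnegative coefficients convergent at $1$ has a termwise derivative converging on $[0,1)$. The fundamental theorem of calculus then applies on $[e^{-1},e^{-a}]\subset[0,1)$.

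No step is a genuine obstacle. A cruder bound would replace $\theta$ by $a$ pointwise, and that suffices. One could also keep the factor $2\rho/N$ in $\theta$ to improve the constant, but it is not needed.
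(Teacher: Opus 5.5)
Your proof is correct, and it agrees with the paper up to the last step. Both arguments start from Lemma~\ref{lem:interp-pointwise-app}, substitute $\rho=e^{-t}$, and use $\rho^2\le1$. They then handle $\int\partial_\rho\widehat\chi_N/\theta_N\,\dd\rho$ differently.

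The paper integrates this by parts. It bounds the boundary term $\widehat\chi_N(\rho_0)/\theta_N(\rho_0)$ with the rational upper profile $\widehat\chi_N\le C/\theta_N$ of Corollary~\ref{cor:upper-profile}. It then discards the subtracted integral because $1/\theta_N$ is increasing in $\rho$.

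You instead pull out $\sup 1/\theta_N\le 2\chi_{\rm SG}(N)/c$ on the window and telescope. For this you need only $\widehat\chi_N(e^{-a})\le\widehat\chi_N(1)=\chi_{\rm SG}(N)$ from Lemma~\ref{lem:chi-CS}.

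Your version has three advantages:
\begin{itemize}
\item It rests entirely on Hermite--Mehler positivity, through the pointwise lemma and the monotonicity lemma. It does not need the exact incident-edge identity or the Schur sign that feed Corollary~\ref{cor:upper-profile}.
\item It gives the explicit constant $C_c=4/c$.
\item It addresses the termwise differentiation of the Hermite series explicitly.
\end{itemize}

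The paper's use of the upper profile at $\rho_0$ gives $\widehat\chi_N(\rho_0)\lesssim\chi_{\rm SG}(N)/c$ rather than $\chi_{\rm SG}(N)$. This improves the $c$-dependence to order $c^{-2}$ for large $c$, which is immaterial for the lemma.
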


\begin{proof}
For $N=1$, the interpolation remainder vanishes identically.
Assume $N\ge2$.
If $c/\chi_{\mathrm{SG}}(N)\ge1$, the claim is immediate.  Otherwise,
change variables to $\rho=e^{-t}$, $\dd t=-\dd\rho/\rho$,
and write $\rho_0:=e^{-c/\chi_{\mathrm{SG}}(N)}$, $\rho_1:=e^{-1}$, and $\theta_N(\rho):=1-\rho+2\rho/N$.
By Lemma~\ref{lem:interp-pointwise-app},
\[
  \int_{c/\chi_{\mathrm{SG}}(N)}^{1}
   e^{-t}\,\frac{|\mathfrak R_N^{\mathrm{interp}}(t)|}{\theta_N}\,\dd t
   \;\le\;\frac{2}{N}
     \int_{\rho_1}^{\rho_0}
       \frac{\rho^2\,\partial_\rho\widehat\chi_N(\rho)}{\theta_N(\rho)}\,\dd\rho
   \;\le\;\frac{2}{N}
     \int_{\rho_1}^{\rho_0}
       \frac{\partial_\rho\widehat\chi_N(\rho)}{\theta_N(\rho)}\,\dd\rho,
\]
using $\rho^2\le 1$ on $[\rho_1,\rho_0]$ and $\partial_\rho\widehat\chi_N\ge 0$, which follows from the nonnegative squared Hermite coefficients in the OU expansion~\cite{Janson1997,Chatterjee2014}. By Corollary~\ref{cor:upper-profile},
$\widehat\chi_N(\rho)\le C/\theta_N(\rho)$ unconditionally, and since
$\theta_N(\rho)=1-\rho(1-2/N)$ is decreasing in~$\rho$,
$\partial_\rho(1/\theta_N(\rho))\ge 0$. Integration by parts gives
\[
  \int_{\rho_1}^{\rho_0}
     \frac{\partial_\rho\widehat\chi_N(\rho)}{\theta_N(\rho)}\,\dd\rho
   =\Big[\frac{\widehat\chi_N(\rho)}{\theta_N(\rho)}\Big]_{\rho_1}^{\rho_0}
     -\int_{\rho_1}^{\rho_0}
        \widehat\chi_N(\rho)\,\partial_\rho\!\Big(\frac{1}{\theta_N(\rho)}\Big)\dd\rho.
\]
The boundary term at $\rho_0$ is bounded by
$C/\theta_N(\rho_0)^{2}\le C_c\,\chi_{\mathrm{SG}}(N)^{2}$, since
$\theta_N(\rho_0)\asymp c/\chi_{\mathrm{SG}}(N)+1/N$.
The boundary term at $\rho_1$ is $O(1)$. The integral is
nonnegative and is subtracted, hence drops out of the upper bound.
This yields~\eqref{eq:interp-integrated}.
\end{proof}

\begin{remark}[Reduction of the integrated problem to the Schur contribution]\label{rem:interp-integrated-strength}
Lemma~\ref{lem:interp-integrated-app} provides an unconditional
bound on the integrated interpolation remainder of order
$\chi_{\mathrm{SG}}(N)^{2}/N$. In any regime where
$\chi_{\mathrm{SG}}(N)=O(N^{\gamma})$ with $\gamma\le \tfrac12$,
this contribution is $O(1)$. Under the critical scaling
$\gamma=1/3$, proved in~\cite{DuHuang2026},
it is $O(N^{-1/3})=o(1)$. Consequently, verification through the separate
contributions reduces the integrated-remainder estimate
\eqref{eq:Qe-integrated-remainder} to the Schur contribution,
which is bounded in Proposition~\ref{prop:cubic-from-moments}.  Lemma~\ref{lem:interp-integrated-app} does not give
an $O(N^{-1})$ pointwise bound on the interpolation remainder, so no
analogous reduction is claimed for the stronger pointwise
estimate~\eqref{eq:Qe-direct-ladder}.
\end{remark}

\subsection{Schur estimate and mixed cubic control}

The estimates in this subsection use two external inputs, both cited here because they are separate from the new finite-$N$ summation argument: Schur-complement and Hadamard-product positivity for correlation matrices~\cite{Schur1911,HornJohnson2013}, and the replica notation for rewriting products of Gibbs averages as Gibbs averages over independent replicas~\cite[Ch.~1]{Panchenko2013Book}.

\begin{lemma}[Fixed-site representation of the cubic Schur term]%
\label{lem:triple-overlap-rep}
For fixed incident site $j$, replica $(1)$ from system~$1$, and independent
replicas $(2),(3)$ from system~$2$, set
$q_j^{23}:=\sigma_j^{(2)}\sigma_j^{(3)}$. Then
\begin{equation}\label{eq:cubic-overlap}
  \frac{1}{N^2}\sum_{u,v=1}^N
  G_{uv}^1\,G_{uj}^2\,G_{vj}^2
  =\bigl\langle R_{12}R_{13}q_j^{23}\bigr\rangle .
\end{equation}
Consequently the incident-edge Schur cubic is measured by
\begin{equation}\label{eq:mixed-cubic-functional}
  \mathfrak C_N(t):=\frac1N\sum_{j=1}^N\int_0^1
  \E_{j,s\rho_t}
  \left|\bigl\langle R_{12}R_{13}q_j^{23}\bigr\rangle\right|\,ds .
\end{equation}
\end{lemma}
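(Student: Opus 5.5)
The identity \eqref{eq:cubic-overlap} is a direct replica expansion of the overlaps, so the plan is to expand the right-hand side and match it term by term. Fix the disorder pair $(J^1,J^2)$. Replica $(1)$ is a Gibbs sample of system~$1$, and replicas $(2),(3)$ are independent samples of system~$2$. Expanding the overlaps gives
\[
  R_{12}R_{13}q_j^{23}
  =\frac{1}{N^2}\sum_{u,v=1}^N
   \sigma_u^{(1)}\sigma_v^{(1)}\,
   \bigl(\sigma_u^{(2)}\sigma_j^{(2)}\bigr)\,
   \bigl(\sigma_v^{(3)}\sigma_j^{(3)}\bigr).
\]
I will take the product Gibbs expectation termwise; this is a finite sum, so no integrability issue arises.

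Because the three replicas are independent under the product measure for fixed disorder, each summand factorizes into three single-replica averages:
\[
  \bigl\langle\sigma_u^{(1)}\sigma_v^{(1)}\bigr\rangle\,
  \bigl\langle\sigma_u^{(2)}\sigma_j^{(2)}\bigr\rangle\,
  \bigl\langle\sigma_v^{(3)}\sigma_j^{(3)}\bigr\rangle .
\]
In the notation $G^a_{uv}=\langle\sigma_u\sigma_v\rangle_{J^a}$ this equals $G^1_{uv}G^2_{uj}G^2_{vj}$, since replicas $(2)$ and $(3)$ share the system-$2$ Gibbs measure. Summing over $u,v$ and dividing by $N^2$ gives \eqref{eq:cubic-overlap}. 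The computation holds for every disorder realization, so it holds under any coupling of $(J^1,J^2)$, including $\E_{j,s\rho_t}$.

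For the ``consequently'' part, I match this sum with the cubic terms in the Schur expansion \eqref{eq:schur-expand}. Those terms are $-G^1_{ik}G^2_{ij}G^2_{kj}-G^2_{ik}G^1_{ij}G^1_{kj}$, summed over $i,k\ne j$ with prefactor $\rho_t/N$ and then normalized by $1/N$ over $j$. This is $\rho_t/N^2$ times the sum in \eqref{eq:cubic-overlap}, up to three adjustments:
\begin{itemize}
  \item the excluded indices $u=j$ or $v=j$;
  \item the $1\leftrightarrow2$ symmetric counterpart;
  \item the absolute value.
\end{itemize}
Bounding by absolute values then shows that the cubic Schur contribution is dominated by a multiple of $\rho_t N\mathfrak C_N(t)$, so \eqref{eq:mixed-cubic-functional} is the natural functional. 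This domination is the bound later used in Proposition~\ref{prop:schur-cubic-control}.

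The only step needing care is the bookkeeping of the excluded terms with $u=j$ or $v=j$. There $G_{jj}=1$, and the contribution is $G^1_{uj}G^2_{uj}$-type, i.e.\ quadratic. These terms must either be absorbed into the quartic Schur piece, which has the favorable sign, or bounded separately. For the present lemma, however, the identity itself involves no exclusions and is an exact algebraic statement, so this bookkeeping affects only how $\mathfrak C_N$ is used afterward, not the proof of \eqref{eq:cubic-overlap}.
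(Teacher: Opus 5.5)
Your proposal is correct and is essentially the paper's proof: both expand $R_{12}R_{13}q_j^{23}$ as a double sum over sites, factorize the product Gibbs average using independence of the replicas for fixed disorder, and identify the factors as $G^1_{uv}G^2_{uj}G^2_{vj}$; the second display is simply a definition. Your remark on the excluded indices $u=j$ or $v=j$ is not part of this lemma. In the paper those terms are not absorbed into the quartic piece. They are handled by positivity of $X_{j,1}=v^{\mathsf T}G^1v$, which makes the absolute value redundant, and by the endpoint correction of Lemma~\ref{lem:schur-endpoint-bookkeeping}, which is favorable after disorder averaging.
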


\begin{proof}
Using the replica convention for products of Gibbs averages~\cite[Ch.~1]{Panchenko2013Book}, take two independent replicas $(2),(3)$ of the second Gibbs measure:
\[
  G_{uj}^2G_{vj}^2
  =\bigl\langle
    \sigma_u^{(2)}\sigma_j^{(2)}
    \sigma_v^{(3)}\sigma_j^{(3)}
  \bigr\rangle_{J^2}.
\]
Multiplying by $G_{uv}^1$ and summing over $u,v$ gives
\[
  \sum_{u,v}G_{uv}^1G_{uj}^2G_{vj}^2
  =N^2\bigl\langle R_{12}R_{13}q_j^{23}\bigr\rangle,
\]
which is~\eqref{eq:cubic-overlap}. The fixed-site form is essential under
the incident-edge interpolation $\E_{j,s\rho_t}$, because the law itself depends
on~$j$.
\end{proof}

\begin{remark}[Overlap representation of the Schur remainder]\label{rem:schur-overlap}
If the ensemble did not depend on $j$, averaging~\eqref{eq:cubic-overlap} over
$j$ would replace $q_j^{23}$ by the global overlap $R_{23}$. In the actual
incident-edge interpolation, however, the expectation $\E_{j,s\rho_t}$ depends
on the fixed incident site~$j$, so the appropriate object is the fixed-site functional
$\mathfrak C_N(t)$ in~\eqref{eq:mixed-cubic-functional}. This is why
Proposition~\ref{prop:mixed-cubic-schur} is formulated in terms of
$q_j^{23}$ rather than only $R_{23}$.
\end{remark}

\begin{lemma}[Positivity and endpoint correction for the cubic sums]
\label{lem:schur-endpoint-bookkeeping}
At $\beta=1$, $h=0$, and $N\ge2$, define the restricted and unrestricted
cubic sums
\[
\begin{aligned}
  X_{j,1}^{\mathrm{res}}
  &:=\sum_{\substack{u\ne j\\v\ne j}}G_{uv}^1G_{uj}^2G_{vj}^2,
  &\qquad
  X_{j,1}
  &:=\sum_{u,v=1}^NG_{uv}^1G_{uj}^2G_{vj}^2,\\
  X_{j,2}^{\mathrm{res}}
  &:=\sum_{\substack{u\ne j\\v\ne j}}G_{uv}^2G_{uj}^1G_{vj}^1,
  &
  X_{j,2}
  &:=\sum_{u,v=1}^NG_{uv}^2G_{uj}^1G_{vj}^1 .
\end{aligned}
\]
For each realization all four sums are nonnegative. Moreover, for
$0\le\lambda\le\rho_t$ and $\ell=1,2$,
\begin{equation}\label{eq:schur-endpoint-error}
  \E_{j,\lambda}X_{j,\ell}^{\mathrm{res}}
  \le\E_{j,\lambda}X_{j,\ell}-1.
\end{equation}
\end{lemma}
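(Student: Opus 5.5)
Nonnegativity is a pointwise positive semidefiniteness statement, and the endpoint correction~\eqref{eq:schur-endpoint-error} comes from isolating the terms with $u=j$ or $v=j$ in the unrestricted sum. For nonnegativity, fix a disorder realization. For $\ell=1$, write
\[
  X_{j,1}=w^{\top}G^1w,\qquad w_u:=G_{uj}^2 .
\]
Since $G^1$ is a correlation matrix, it is positive semidefinite, so $X_{j,1}\ge0$. The restricted sum is $X_{j,1}^{\rm res}=\tilde w^{\top}G^{1,\backslash j}\tilde w$ with $\tilde w=(G^2_{uj})_{u\ne j}$. A principal submatrix of a positive semidefinite matrix is positive semidefinite, so $X_{j,1}^{\rm res}\ge0$. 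The case $\ell=2$ is identical after exchanging the roles of the two systems.

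For~\eqref{eq:schur-endpoint-error}, expand the unrestricted sum by separating the terms with $u=j$ or $v=j$. Using $G_{jj}^a=1$,
\[
  X_{j,1}=X_{j,1}^{\rm res}+2\sum_{u\ne j}G_{uj}^1G_{uj}^2+1 .
\]
Here the two single-index sums ($u=j$, $v\ne j$ and $v=j$, $u\ne j$) coincide by the symmetry of $G^1$ and $G^2$, and the term $u=v=j$ equals $G^1_{jj}(G^2_{jj})^2=1$. Therefore
\[
  \E_{j,\lambda}X_{j,1}^{\rm res}=\E_{j,\lambda}X_{j,1}-1-2\sum_{u\ne j}\E_{j,\lambda}\bigl[G_{uj}^1G_{uj}^2\bigr],
\]
and the claim reduces to showing that $\E_{j,\lambda}[G_{uj}(J^1)G_{uj}(J^2)]\ge0$ for each $u\ne j$.

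This sign is the only step that is not purely algebraic. I would use the partial Hermite expansion of Lemma~\ref{lem:partial-hermite} with $f=G_{uj}$ and $0\le\lambda\le\rho_t$:
\[
  \E_{j,\lambda}[f(J^1)f(J^2)]=\sum_\alpha a_\alpha^2\,\rho_t^{|\alpha|-m_j(\alpha)}\lambda^{m_j(\alpha)}\ge0 .
\]
Each summand is nonnegative, and this holds even at $\lambda=0$, where the sum is exactly $0$, consistent with $\Phi_{uj}(0)=0$ in Lemma~\ref{lem:incident-price}. The case $\ell=2$ follows by symmetry, because the law $\E_{j,\lambda}$ is symmetric under exchanging $J^1$ and $J^2$. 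No condition on $N$ beyond $N\ge2$ is needed; this hypothesis only ensures that the sets $u\ne j$ are nonempty.
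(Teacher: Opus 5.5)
Your proof is correct and follows the paper's argument essentially step for step. Nonnegativity comes from positive semidefiniteness of $G^1$ and of its principal submatrix. Inclusion--exclusion with $G_{jj}^a=1$ gives the difference $2\sum_{u\ne j}G_{uj}^1G_{uj}^2+1$, which the paper writes equivalently as $2\sum_{u=1}^N G_{uj}^1G_{uj}^2-1$. Lemma~\ref{lem:partial-hermite} then gives the sign of each $\E_{j,\lambda}[G_{uj}^1G_{uj}^2]$.

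Your aside that the Hermite sum vanishes at $\lambda=0$ is true, but it relies on $G_{uj}$ being odd in the edges incident to $j$. It is not needed for the inequality.
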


\begin{proof}
Write $v=(G_{uj}^2)_{u=1}^N$. Then
$X_{j,1}=v^{\mathsf T}G^1v\ge0$, since $G^1$ is positive semidefinite.
The restricted sum is the corresponding quadratic form of the principal
submatrix obtained by deleting site $j$. Exchanging the systems proves
the same facts for $X_{j,2}$ and $X_{j,2}^{\mathrm{res}}$.
Inclusion--exclusion and $G_{jj}^1=G_{jj}^2=1$ give, for $\ell=1,2$,
\begin{equation}\label{eq:first-cubic-endpoint}
  X_{j,\ell}-X_{j,\ell}^{\mathrm{res}}
  =2\sum_{u=1}^NG_{uj}^1G_{uj}^2-1.
\end{equation}
By Lemma~\ref{lem:partial-hermite}, each expectation
$\E_{j,\lambda}[G_{uj}^1G_{uj}^2]$ is nonnegative. The diagonal term
equals $1$, so the expectation of the right-hand side is at least $1$.
This proves~\eqref{eq:schur-endpoint-error}.
\end{proof}

\begin{proposition}[Schur contribution controlled by the fixed-site cubic term]
\label{prop:schur-cubic-control}
At $\beta=1$, $h=0$, and $N\ge2$, for every $t\ge0$,
\begin{equation}\label{eq:schur-cubic-control}
  0\le-\mathfrak R_N^{\mathrm{Schur}}(t)
  \le2\rho_tN\mathfrak C_N(t)-\frac{2\rho_t}{N}
  \le2\rho_tN\mathfrak C_N(t).
\end{equation}
\end{proposition}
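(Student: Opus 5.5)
The plan is to expand the Schur remainder using \eqref{eq:schur-expand}. Then discard the quartic term by a sign argument, convert the two cubic terms to unrestricted quadratic forms via Lemma~\ref{lem:schur-endpoint-bookkeeping}, and identify those with the fixed-site functional of Lemma~\ref{lem:triple-overlap-rep}. The lower bound $0\le-\mathfrak R_N^{\rm Schur}(t)$ is already Lemma~\ref{lem:remainder-signs}, and the last inequality in \eqref{eq:schur-cubic-control} is trivial since $2\rho_t/N\ge0$. So only the middle inequality needs proof.

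\emph{Step 1 (expansion).} Insert \eqref{eq:schur-expand} into \eqref{eq:R-schur} and sum as in \eqref{eq:global-remainders}. For fixed $j$ and $s$ the integrand, summed over $i,k\ne j$, is
\[
  -X_{j,1}^{\rm res}-X_{j,2}^{\rm res}
  +\sum_{i,k\ne j}G_{ij}^1G_{kj}^1G_{ij}^2G_{kj}^2 .
\]
\emph{Step 2 (quartic sign).} The quartic sum factorizes as $\bigl(\sum_{i\ne j}G_{ij}^1G_{ij}^2\bigr)^2\ge0$ for each realization. Hence
\[
  -\mathfrak R_N^{\rm Schur}(t)\le\frac{\rho_t}{N^2}\sum_{j=1}^N\int_0^1
  \E_{j,s\rho_t}\bigl[X_{j,1}^{\rm res}+X_{j,2}^{\rm res}\bigr]\,ds .
\]

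\emph{Step 3 (endpoint correction).} Apply \eqref{eq:schur-endpoint-error} with $\lambda=s\rho_t\le\rho_t$ to each $\ell=1,2$. Each restricted expectation is at most $\E_{j,s\rho_t}X_{j,\ell}-1$.

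\emph{Step 4 (overlap representation).} By \eqref{eq:cubic-overlap}, $X_{j,1}=N^2\langle R_{12}R_{13}q_j^{23}\rangle$. For $X_{j,2}$, the law $\E_{j,\lambda}$ is symmetric under exchanging $J^1$ and $J^2$: the marginals are identical and the cross-covariances are symmetric. Therefore $\E_{j,\lambda}X_{j,2}=\E_{j,\lambda}X_{j,1}$. Bounding $\langle\cdot\rangle\le|\langle\cdot\rangle|$ gives
\[
  -\mathfrak R_N^{\rm Schur}(t)\le\frac{\rho_t}{N^2}\sum_j
  \Bigl(2N^2\int_0^1\E_{j,s\rho_t}\bigl|\langle R_{12}R_{13}q_j^{23}\rangle\bigr|\,ds-2\Bigr)
  =2\rho_tN\mathfrak C_N(t)-\frac{2\rho_t}{N}.
\]

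The only delicate point is bookkeeping rather than a real obstacle. One must check that the index restrictions $i,k\ne j$ in \eqref{eq:R-schur} match exactly the restricted sums $X^{\rm res}_{j,\ell}$, so that the $-1$ endpoint correction enters with the favorable sign. One must also justify the $J^1\leftrightarrow J^2$ exchangeability under the partial-covariance law $\E_{j,\lambda}$.
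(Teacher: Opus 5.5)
Your proposal is correct and follows essentially the same route as the paper. Both expand the Schur complements into $X_{j,1}^{\rm res}+X_{j,2}^{\rm res}$ minus the perfect square $\bigl(\sum_{u\ne j}G_{uj}^1G_{uj}^2\bigr)^2$, drop the square, apply Lemma~\ref{lem:schur-endpoint-bookkeeping}, and identify both cubic terms with $\rho_tN\mathfrak C_N(t)$, using Lemma~\ref{lem:triple-overlap-rep} and the $J^1\leftrightarrow J^2$ exchangeability of $\E_{j,\lambda}$. The only cosmetic difference is that you bound $\langle R_{12}R_{13}q_j^{23}\rangle$ by its absolute value, whereas the paper notes that $X_{j,1}\ge0$ makes this step an equality.
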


\begin{proof}
Expanding the Schur complements gives the exact identity
\begin{equation}\label{eq:schur-cubic-exact-sum}
\begin{split}
  -\mathfrak R_N^{\mathrm{Schur}}(t)
  =\frac{\rho_t}{N^2}\sum_{j=1}^N\int_0^1
  \E_{j,s\rho_t}\left[
    X_{j,1}^{\mathrm{res}}+X_{j,2}^{\mathrm{res}}
    -\left(\sum_{u\ne j}G_{uj}^1G_{uj}^2\right)^2
  \right]\,\dd s.
\end{split}
\end{equation}
Dropping the nonnegative square and applying
Lemma~\ref{lem:schur-endpoint-bookkeeping} yield
\[
  -\mathfrak R_N^{\mathrm{Schur}}(t)
  \le\frac{\rho_t}{N^2}\sum_{j=1}^N\int_0^1
  \E_{j,s\rho_t}[X_{j,1}+X_{j,2}]\,\dd s-\frac{2\rho_t}{N}.
\]
Since $X_{j,1}\ge0$, the absolute value in the definition of
$\mathfrak C_N(t)$ is redundant. Lemma~\ref{lem:triple-overlap-rep}
shows that the term containing $X_{j,1}$ contributes $\rho_tN\mathfrak C_N(t)$.
The partial-covariance law is invariant under exchanging the two
systems, so the term containing $X_{j,2}$ has the same contribution.
The lower bound follows from Lemma~\ref{lem:remainder-signs}.
\end{proof}

\begin{remark}[Scope of Proposition~\ref{prop:schur-cubic-control}]
\label{rem:schur-cubic-scope}
Proposition~\ref{prop:schur-cubic-control} converts a separate estimate on
the fixed-site functional $\mathfrak C_N(t)$ into a bound on the Schur
remainder. The additional comparison and critical inputs in
Proposition~\ref{prop:cubic-from-moments} prove the integrated estimate
\eqref{eq:ass-SK-narrow}.  The exact incident-edge
identity and the unconditional variance upper bound do not depend on
Lemma~\ref{lem:schur-endpoint-bookkeeping} or on this proposition.
\end{remark}

\begin{proposition}[Reverse comparison with the cubic functional]
\label{prop:reverse-schur}
Put $A_N(t)=\rho_tN\mathfrak C_N(t)$ and
$S_N(t)=-\mathfrak R_N^{\rm Schur}(t)$. Then
\begin{equation}\label{eq:reverse-schur}
 A_N(t)-E_N(t)\le S_N(t)\le2\bigl(A_N(t)-E_N(t)\bigr),
\end{equation}
where
\begin{equation}\label{eq:cubic-endpoint-sharp}
\begin{split}
 E_N(t)&=\frac{\rho_t}{N^2}\sum_j\int_0^1
 \E_{j,s\rho_t}\left[2\sum_uG_{uj}^1G_{uj}^2-1\right]\dd s,\\
 \frac{\rho_t}{N}&\le E_N(t)
 \le\frac{\rho_t}{N}\bigl(2\chi_N(t)-1\bigr).
\end{split}
\end{equation}
\end{proposition}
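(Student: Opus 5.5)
The starting point is the exact identity \eqref{eq:schur-cubic-exact-sum} from the proof of Proposition~\ref{prop:schur-cubic-control}:
\[
  S_N(t)=\frac{\rho_t}{N^2}\sum_j\int_0^1\E_{j,s\rho_t}\Bigl[X_{j,1}^{\rm res}+X_{j,2}^{\rm res}-Y_j^2\Bigr]\dd s,
  \qquad Y_j:=\sum_{u\ne j}G_{uj}^1G_{uj}^2 .
\]
By \eqref{eq:first-cubic-endpoint}, $X_{j,\ell}^{\rm res}=X_{j,\ell}-(2\sum_uG_{uj}^1G_{uj}^2-1)$ holds pointwise for $\ell=1,2$. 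The system-exchange symmetry of $\E_{j,\lambda}$ and Lemma~\ref{lem:triple-overlap-rep} give
\[
  \frac{\rho_t}{N^2}\sum_j\int_0^1\E_{j,s\rho_t}X_{j,\ell}\,\dd s=A_N(t)
\]
for each $\ell$. Consequently
\[
  S_N(t)=2\bigl(A_N(t)-E_N(t)\bigr)-\frac{\rho_t}{N^2}\sum_j\int_0^1\E_{j,s\rho_t}Y_j^2\,\dd s,
\]
with $E_N$ exactly as in \eqref{eq:cubic-endpoint-sharp}. Dropping the nonnegative square gives the upper bound in \eqref{eq:reverse-schur}.

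For the lower bound I need the pointwise estimate
\[
  Y_j^2\le X_{j,1}^{\rm res}
  \qquad(\text{or the symmetric version with }X_{j,2}^{\rm res}).
\]
Write $w=(G_{uj}^1)_{u\ne j}$ and $v=(G_{uj}^2)_{u\ne j}$, so $Y_j=\langle w,v\rangle$ and $X_{j,1}^{\rm res}=v^{\mathsf T}G^{1,\backslash j}v$. The key fact is that for a unit-diagonal PSD correlation matrix $G^1$, the Schur complement $G^{1,\backslash j}-ww^{\mathsf T}\succeq0$; this is the statement $\widehat G^{(j)}\succeq0$ already recorded in Lemma~\ref{lem:incident-price}. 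Hence
\[
  v^{\mathsf T}G^{1,\backslash j}v\ge (w^{\mathsf T}v)^2=Y_j^2 .
\]
To symmetrize, use $Y_j^2\le\tfrac12(X_{j,1}^{\rm res}+X_{j,2}^{\rm res})$. Then
\[
  X_{j,1}^{\rm res}+X_{j,2}^{\rm res}-Y_j^2\ge\tfrac12\bigl(X_{j,1}^{\rm res}+X_{j,2}^{\rm res}\bigr).
\]
Taking expectations and using the same bookkeeping gives $S_N\ge A_N-E_N$.

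For the bounds on $E_N$ in \eqref{eq:cubic-endpoint-sharp}, the lower bound $E_N\ge\rho_t/N$ follows as in Lemma~\ref{lem:schur-endpoint-bookkeeping}. Each $\E_{j,\lambda}[G_{uj}^1G_{uj}^2]\ge0$ by Lemma~\ref{lem:partial-hermite}, and the $u=j$ term equals $1$, so the bracket has expectation at least $1$. Summing over $j$ and multiplying by $\rho_t/N^2$ gives $\rho_t/N$.

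For the upper bound, monotonicity in Lemma~\ref{lem:partial-hermite} gives $\E_{j,s\rho_t}[G_{uj}^1G_{uj}^2]\le\E_{\rho_t}[G_{uj}^1G_{uj}^2]=C^e_{uj}$. Summing, $\sum_j\sum_uC^e_{uj}=N\chi_N(t)$ by Lemma~\ref{lem:cross-only-vertex} and the definition $T=N\chi_N$. Thus
\[
  E_N(t)\le\frac{\rho_t}{N^2}\bigl(2N\chi_N(t)-N\bigr)=\frac{\rho_t}{N}\bigl(2\chi_N(t)-1\bigr).
\]

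The only delicate step is the Cauchy--Schwarz/Schur-complement inequality $Y_j^2\le X^{\rm res}_{j,\ell}$. One must check that the restricted index set matches: the Schur complement is taken at site $j$ on indices $u,v\ne j$, and $w$ excludes the diagonal entry. With the restriction $u\ne j$ in $Y_j$ this matches exactly, so I expect it to go through. The remaining care is verifying that the exchange symmetry of $\E_{j,\lambda}$ applies to the endpoint terms as well, which it does since $\sum_uG^1_{uj}G^2_{uj}$ is symmetric in the two systems.
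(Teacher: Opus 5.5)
Your proposal is correct and follows the paper's proof essentially step for step. Both use the Schur-complement bound $X^{\rm res}_{j,\ell}\ge\bigl(\sum_{u\ne j}G^1_{uj}G^2_{uj}\bigr)^2$, its symmetrization, insertion into \eqref{eq:schur-cubic-exact-sum} together with \eqref{eq:first-cubic-endpoint} and the system exchange, and Lemma~\ref{lem:partial-hermite} for the two bounds on $E_N$. Your explicit identity $S_N=2(A_N-E_N)-\frac{\rho_t}{N^2}\sum_j\int_0^1\E_{j,s\rho_t}Y_j^2\,\dd s$ simply makes the paper's chain of inequalities exact.

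The only point you leave implicit is in identifying $\frac{\rho_t}{N^2}\sum_j\int_0^1\E_{j,s\rho_t}X_{j,\ell}\,\dd s$ with $A_N(t)$. That step needs $X_{j,\ell}\ge0$, from Lemma~\ref{lem:schur-endpoint-bookkeeping}, to remove the absolute value in the definition of $\mathfrak C_N(t)$.
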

\begin{proof}
Let $B_a=(G_{uv}^a)_{u,v\ne j}$ and $v_a=(G_{uj}^a)_{u\ne j}$.
Schur positivity gives $B_a\succeq v_av_a^{\mathsf T}$, so
$X_{j,1}^{\rm res},X_{j,2}^{\rm res}\ge(v_1^{\mathsf T}v_2)^2=:Q_j$.
Consequently
\[
 \tfrac12(X_{j,1}^{\rm res}+X_{j,2}^{\rm res})
 \le X_{j,1}^{\rm res}+X_{j,2}^{\rm res}-Q_j
 \le X_{j,1}^{\rm res}+X_{j,2}^{\rm res}.
\]
Insert this into \eqref{eq:schur-cubic-exact-sum}, use
\eqref{eq:first-cubic-endpoint}, and exchange the systems to obtain
\eqref{eq:reverse-schur}. Lemma~\ref{lem:partial-hermite} bounds each
partial-covariance expectation of $G_{uj}^1G_{uj}^2$ between zero and
its value at covariance $\rho_t$ on every edge. The diagonal is one and the sum of these terms is $N\chi_N(t)$, proving
\eqref{eq:cubic-endpoint-sharp}.
\end{proof}

\begin{proposition}[Integrated cubic bound from critical moment estimates]
\label{prop:cubic-from-moments}
At $\beta=1$, let $m_4=\E\langle R_{12}^4\rangle$ and
$\chi=\chi_{\rm SG}(N)$. The cavity estimate
of~\cite[Proposition~5.7]{DuHuang2026} implies
\begin{equation}\label{eq:cubic-moment-profile}
 \rho_tN\mathfrak C_N(t)
 \le\frac{2\chi}{N}+C\sqrt{N\chi_N(t)m_4}+CNm_4.
\end{equation}
Together with the fourth-moment bound
of~\cite[Proposition~5.1]{DuHuang2026}, this proves
\eqref{eq:integrated-cubic-control} for every fixed $c>1$.
These estimates also verify Assumption~\ref{ass:SK-remainder-closure}
directly.
\end{proposition}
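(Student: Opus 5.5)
The proof has two parts. First I derive the pointwise bound \eqref{eq:cubic-moment-profile} on the cubic functional. Then I integrate it over $[c/\chi,1]$ against $e^{-t}/\theta_N(t)$, using the fourth-moment bound $m_4\lesssim N^{-4/3}$ and the scaling $\chi\asymp N^{1/3}$ from \cite[Proposition~5.1]{DuHuang2026}.

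\emph{Pointwise bound.} Start from the fixed-site representation of Lemma~\ref{lem:triple-overlap-rep}. Since $X_{j,1}\ge0$, the integrand $\langle R_{12}R_{13}q_j^{23}\rangle$ is nonnegative. Split $q_j^{23}=\sigma_j^{(2)}\sigma_j^{(3)}$ into its global average $R_{23}$ plus the fluctuation $q_j^{23}-R_{23}$.

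The $R_{23}$ part does not depend on $j$ through the observable, only through the law $\E_{j,s\rho_t}$. Bound $|R_{12}R_{13}R_{23}|\le\frac12 R_{12}^2(R_{13}^2+R_{23}^2)^{1/2}\cdot$const, or use Cauchy--Schwarz: $\E|R_{12}R_{13}R_{23}|\le (\E R_{12}^2)^{1/2}(\E R_{13}^2R_{23}^2)^{1/2}$. The first factor is $\le (\chi_N/N)^{1/2}$ after comparing $\E_{j,s\rho_t}$ with $\E_{\rho_t}$ via Lemma~\ref{lem:partial-hermite}, which shows the partial expectation is monotone and dominated by the full one. The second factor is $\le m_4^{1/2}$ because replicas $2,3$ lie in one system, whose marginal law is ordinary SK. After the prefactor $\rho_tN$, this gives the term $C\sqrt{N\chi_N(t)m_4}$.

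The fluctuation part is where \cite[Proposition~5.7]{DuHuang2026} enters. That cavity estimate should compare fixed-site spin products with global overlaps, at cost $O(m_4)$ per site after averaging over $j$, giving the term $CNm_4$. The diagonal terms $u=j$ or $v=j$ in $X_{j,1}$ produce at most $2\sum_u G_{uj}^1G_{uj}^2$. Their averaged contribution is the $2\chi/N$ term, again by Lemma~\ref{lem:partial-hermite}.

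\emph{Integration.} On $[c/\chi,1]$ we have $\theta_N(t)\asymp t+N^{-1}$ and, by Corollary~\ref{cor:upper-profile}, $\chi_N(t)\lesssim\theta_N^{-1}$. Integrate each term:
\[
\int_{c/\chi}^1\frac{2\chi/N+CNm_4}{\theta_N}\,\dd t\lesssim\Bigl(\frac{\chi}{N}+Nm_4\Bigr)\log\chi=O(N^{-1/3}\log N),
\]
and
\[
\int_{c/\chi}^1\frac{\sqrt{N m_4}}{\theta_N^{3/2}}\,\dd t\lesssim\sqrt{Nm_4}\,\chi^{1/2}\lesssim N^{-1/6}N^{1/6}=O(1).
\]
This gives \eqref{eq:integrated-cubic-control}. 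Proposition~\ref{prop:mixed-cubic-schur} then bounds the Schur part of the remainder. Lemma~\ref{lem:interp-integrated-app} with $\chi^2/N=o(1)$ bounds the interpolation part. Together these verify Assumption~\ref{ass:SK-remainder-closure}.

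\emph{Main obstacle.} The hard step is the fluctuation part. The cited cavity estimate is stated for the ordinary SK law, while $\E_{j,s\rho_t}$ decorrelates the two systems on the edges at site $j$. I would first handle this by conditioning on the incident edges $I_j$: under that conditioning, each system's marginal is exactly SK. Since $q_j^{23}$ involves only system~2, the cavity estimate can then be applied inside system~2 after a Cauchy--Schwarz that separates out $R_{12}R_{13}$.
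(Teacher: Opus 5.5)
Your integration over $[c/\chi,1]$ and the final assembly are fine. The $R_{23}$ part of your pointwise bound also works, with one fix: $R_{13}$ is a cross-system overlap, so you need H\"older together with Lemma~\ref{lem:partial-hermite} applied to four-point functions to get $\E_{j,s\rho_t}\langle R_{13}^2R_{23}^2\rangle\le m_4$.

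The gap is the fluctuation part $\langle R_{12}R_{13}(q_j^{23}-R_{23})\rangle$, which carries the whole difficulty. You describe \cite[Proposition~5.7]{DuHuang2026} as a comparison of fixed-site spin products with global overlaps under the partial law. That is a guess, and it is not what the paper uses. In the paper's convention, that estimate is a lower bound on the ordinary OU-coupled susceptibility,
\[
(1-\rho_t)\chi_N(t)\ge 1-\frac{2\chi}{N}-C\sqrt{N\chi_N(t)m_4}-CNm_4,
\]
and the paper explicitly avoids identifying its interpolation with $\E_{j,s\rho_t}$.

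Your fallback does not close the gap either. Conditioning on the edges in $I_j$ freezes them, so neither system is then SK-distributed. More seriously, a Cauchy--Schwarz at fixed $j$ discards the only available cancellation. That cancellation is $N^{-1}\sum_j(q_j^{23}-R_{23})=0$, which would kill the fluctuation term exactly if $\E_{j,s\rho_t}$ did not depend on $j$. Without it you get the baseline of Lemma~\ref{lem:mixed-cubic-baseline}, or, using $\langle(q_j^{23}-R_{23})^2\rangle=O(1)$, only $O(\rho_tNm_4^{1/2})=O(N^{1/3})$ instead of $O(Nm_4)$. The weighted integral is then of order $\chi$ or $N^{1/3}\log N$, not $O(1)$. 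A direct fixed-site estimate of this kind is exactly what the paper leaves open (Remarks~\ref{rem:FM4-route} and~\ref{rem:assumption-verified}).

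The missing idea is to bound the cubic functional indirectly, through the exact identity. Put $A_N=\rho_tN\mathfrak C_N$ and $S_N=-\mathfrak R_N^{\rm Schur}$. The steps are:
\begin{enumerate}[leftmargin=2.3em]
  \item Since $\mathfrak R_N^{\rm interp}\le0$, Proposition~\ref{prop:Qe-direct-ladder} gives $S_N\le 1+\rho_t/N-\theta_N\chi_N$.
  \item The reverse Schur comparison of Proposition~\ref{prop:reverse-schur} gives $A_N\le S_N+E_N$ with $E_N\le\frac{\rho_t}{N}(2\chi_N-1)$.
  \item Because $\theta_N\chi_N=(1-\rho_t)\chi_N+2\rho_t\chi_N/N$, every finite-$N$ term cancels, leaving $A_N\le 1-(1-\rho_t)\chi_N(t)$.
  \item The cavity lower bound on $\chi_N(t)$ then gives \eqref{eq:cubic-moment-profile}.
\end{enumerate}
So the term $2\chi/N$ is inherited from the cavity estimate. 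The diagonal $u=j$, $v=j$ contributions you tried to match with it are precisely $E_N$, and they cancel; in your decomposition they were in any case already inside $\langle R_{12}R_{13}q_j^{23}\rangle$. The same deficit bound controls the total remainder at once, since $-\mathfrak R_N^{\rm Schur}-\mathfrak R_N^{\rm interp}\le 1-(1-\rho_t)\chi_N(t)$. Your detour through Lemma~\ref{lem:interp-integrated-app} is therefore unnecessary, though it is valid.
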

\begin{proof}
In our correlation convention, the cited cavity estimate is
\[
 (1-\rho_t)\chi_N(t)
 \ge1-\frac{2\chi}{N}-C\sqrt{N\chi_N(t)m_4}-CNm_4.
\]
Since $\chi_N(t)\ge1$, the exact identity and remainder signs give
\[
 \begin{aligned}
 0\le{}&-\mathfrak R_N^{\rm Schur}(t)-\mathfrak R_N^{\rm interp}(t)\\
 ={}&1-(1-\rho_t)\chi_N(t)
       -\frac{\rho_t}{N}\bigl(2\chi_N(t)-1\bigr)\\
 \le{}&1-(1-\rho_t)\chi_N(t).
 \end{aligned}
\]
The cited cavity estimate therefore bounds the total remainder by the
right-hand side of \eqref{eq:cubic-moment-profile} directly.
For the cubic functional, the identity and remainder signs give
\[
 S_N(t)\le1+\rho_t/N-\theta_N(t)\chi_N(t).
\]
Use $A_N\le S_N+E_N$ and the upper bound in
\eqref{eq:cubic-endpoint-sharp}. Since
$\theta_N=1-\rho_t+2\rho_t/N$, the finite-$N$ corrections cancel,
giving \eqref{eq:cubic-moment-profile}.
The quoted fourth-moment bound is $m_4=O(N^{-4/3})$ and entails
$\chi\le N\sqrt{m_4}=O(N^{1/3})$. Hence
\[
 \rho_tN\mathfrak C_N(t)
 \le C\left(N^{-1/6}\sqrt{\chi_N(t)}+N^{-1/3}\right).
\]
On $[c/\chi,1]$ use $\chi_N(t)\lesssim t^{-1}$ and
$\theta_N(t)\asymp t$. The required integral is at most
\[
 C_c\left(N^{-1/6}\sqrt\chi+N^{-1/3}\log\chi\right)=O_c(1).
\]
The same fourth-moment and integral estimates apply to the total
remainder, proving Assumption~\ref{ass:SK-remainder-closure} directly.
Empty windows contribute zero. The cavity input is taken from
\cite{DeyKang2026}, as explained in~\cite[Section~5.2]{DuHuang2026}.
Its interpolation need not be identified with our partial-covariance
law. For the cubic functional, Proposition~\ref{prop:reverse-schur}
supplies the comparison.
\end{proof}

\begin{lemma}[Baseline size of the mixed cubic functional]
\label{lem:mixed-cubic-baseline}
For every $t\ge0$,
\begin{equation}\label{eq:mixed-cubic-baseline}
  \mathfrak C_N(t)\le \frac{\chi_N(t)}{N}.
\end{equation}
\end{lemma}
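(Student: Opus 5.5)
The plan is to bound the fixed-site cubic average $\E_{j,s\rho_t}\langle R_{12}R_{13}q_j^{23}\rangle$ for each $j$ and $s$ by a quantity whose $j$-average reproduces $\chi_N(t)/N$. Lemma~\ref{lem:triple-overlap-rep} and Lemma~\ref{lem:schur-endpoint-bookkeeping} give
\[
  \bigl\langle R_{12}R_{13}q_j^{23}\bigr\rangle=N^{-2}X_{j,1}=N^{-2}v^{\mathsf T}G^1v\ge0,
  \qquad v=(G^2_{uj})_u .
\]
So the absolute value in $\mathfrak C_N(t)$ is harmless, and we only need an upper bound on this quadratic form.

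\textbf{Step 1 (pointwise in the disorder).} Bound the quadratic form by $\|G^1\|_{\rm op}\|v\|^2$. This is too crude, since $\|G^1\|_{\rm op}$ may be large. Instead I would use entrywise $|G^1_{uv}|\le1$, which gives $v^{\mathsf T}G^1v\le(\sum_u|G^2_{uj}|)^2$; this is also too crude. The better route is Cauchy--Schwarz in the Hadamard or trace form:
\[
  v^{\mathsf T}G^1v=\sum_{u,w}G^1_{uw}G^2_{uj}G^2_{wj}
  \le\Bigl(\sum_{u,w}(G^1_{uw})^2\Bigr)^{1/2}\Bigl(\sum_{u,w}(G^2_{uj}G^2_{wj})^2\Bigr)^{1/2}.
\]
This still carries a different normalization. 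The cleanest alternative is to use $G^2\preceq$ something, or to exploit $G^2_{wj}$ directly. I would use $\sum_w G^2_{uw}G^2_{wj}$-type bounds: since $G^2$ is a correlation matrix with $0\preceq G^2$ and $\operatorname{tr}G^2=N$, we have $(G^2)^2\preceq N G^2$, but that is again lossy.

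\textbf{Step 2 (the intended route).} Recognize that after averaging over $j$ the target $\chi_N(t)/N=N^{-2}\sum_{u,w}\E_{\rho_t}[G^1_{uw}G^2_{uw}]$ is a Hadamard-type pairing between the two systems. I would bound the replica expression $\langle R_{12}R_{13}q_j^{23}\rangle$ by using $|q_j^{23}|=1$ and then Cauchy--Schwarz in the Gibbs measure:
\[
  \bigl|\langle R_{12}R_{13}q_j^{23}\rangle\bigr|\le\langle R_{12}^2\rangle^{1/2}\langle R_{13}^2\rangle^{1/2}=\langle R_{12}^2\rangle,
\]
where the equality holds because replicas $2$ and $3$ are exchangeable in system~$2$. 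Thus $\E_{j,s\rho_t}|\langle R_{12}R_{13}q_j^{23}\rangle|\le\E_{j,s\rho_t}\langle R_{12}^2\rangle=N^{-2}\sum_{u,w}\E_{j,s\rho_t}[G^1_{uw}G^2_{uw}]$.

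\textbf{Step 3 (removing the partial covariance).} Apply Lemma~\ref{lem:partial-hermite} with $f=G_{uw}$. Since $s\rho_t\le\rho_t$, each term is at most its full-OU value $\E_{\rho_t}[G^1_{uw}G^2_{uw}]$, and summing gives $\E_{j,s\rho_t}\langle R_{12}^2\rangle\le\chi_N(t)/N$ uniformly in $j$ and $s$. Averaging over $j$ and integrating in $s$ then yields the claimed bound $\mathfrak C_N(t)\le\chi_N(t)/N$. The only delicate point is the Gibbs Cauchy--Schwarz step, which needs the replica structure so that both factors are computed under the same product measure for the fixed disorder.
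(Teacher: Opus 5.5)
Your Steps~2--3 are the paper's proof. Gibbs Cauchy--Schwarz, using $|q_j^{23}|=1$ and the exchangeability of replicas $2,3$ for fixed disorder, reduces the fixed-site cubic to $\E_{j,s\rho_t}\langle R_{12}^2\rangle$; Lemma~\ref{lem:partial-hermite} applied to each $G_{uw}$ then bounds this by $\chi_N(t)/N$ uniformly in $j$ and $s$. The exploratory Step~1 and the remark that the cubic equals $N^{-2}X_{j,1}\ge0$ are not needed and can be dropped.
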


\begin{proof}
For fixed $j$ and $s$, applying Cauchy--Schwarz under the product Gibbs measure gives
\[
  \left|\bigl\langle R_{12}R_{13}q_j^{23}\bigr\rangle\right|
  \le \bigl\langle R_{12}^2\bigr\rangle^{1/2}
       \bigl\langle R_{13}^2\bigr\rangle^{1/2}.
\]
Replicas $2$ and $3$ are exchangeable independent samples from the second
system, so the two second moments are equal for fixed disorder. The right-hand
side is $\langle R_{12}^2\rangle$. Taking $\E_{j,s\rho_t}$ and expanding,
\[
  \E_{j,s\rho_t}\langle R_{12}^2\rangle
  =\frac1{N^2}\sum_{u,v}
    \E_{j,s\rho_t}[G_{uv}(J^1)G_{uv}(J^2)] .
\]
Lemma~\ref{lem:partial-hermite} with $f=G_{uv}$ bounds each summand by the corresponding summand with covariance $\rho_t$ on every edge. Therefore
\[
  \E_{j,s\rho_t}\langle R_{12}^2\rangle
  \le \frac1{N^2}\sum_{u,v}\E[G_{uv}(J^1)G_{uv}(J^2)]
  =\frac{\chi_N(t)}{N}.
\]
Averaging over $j$ and $s$ proves~\eqref{eq:mixed-cubic-baseline}.
\end{proof}

\begin{corollary}[Crude Schur bound via the fixed-site cubic term]%
\label{lem:schur-overlap-bound}
At $\beta_c=1$, $h=0$, for each $t\ge0$,
\begin{equation}\label{eq:schur-loss-bound}
  |\mathfrak R_N^{\mathrm{Schur}}(t)|
  \le 2\rho_t\chi_N(t).
\end{equation}
\end{corollary}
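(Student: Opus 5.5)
The bound will follow by combining Proposition~\ref{prop:schur-cubic-control} with the baseline estimate of Lemma~\ref{lem:mixed-cubic-baseline}. By Lemma~\ref{lem:remainder-signs}, $\mathfrak R_N^{\mathrm{Schur}}(t)\le0$, so the absolute value equals $-\mathfrak R_N^{\mathrm{Schur}}(t)$. For $N\ge2$, Proposition~\ref{prop:schur-cubic-control} gives
\[
  0\le-\mathfrak R_N^{\mathrm{Schur}}(t)\le2\rho_tN\mathfrak C_N(t).
\]
Lemma~\ref{lem:mixed-cubic-baseline} gives $N\mathfrak C_N(t)\le\chi_N(t)$. Substituting this yields $|\mathfrak R_N^{\mathrm{Schur}}(t)|\le2\rho_t\chi_N(t)$, which is \eqref{eq:schur-loss-bound}.

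It remains to treat the case $N=1$. Here the index set $\{i\ne j\}$ is empty, so $\mathfrak R_1^{\mathrm{Schur}}\equiv0$. The right-hand side $2\rho_t\chi_1(t)$ is nonnegative, and the inequality holds trivially.

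No step here is a real obstacle, since the statement is a direct consequence of the two cited results. The only point needing care is that Proposition~\ref{prop:schur-cubic-control} carries the hypothesis $N\ge2$, which is why the case $N=1$ is checked separately. The bound is crude because the Cauchy--Schwarz step in Lemma~\ref{lem:mixed-cubic-baseline} discards the fixed-site factor $q_j^{23}$ entirely. This is consistent with the corollary's role as a baseline rather than a critical-window estimate.
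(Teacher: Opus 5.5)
Your proof is correct and follows the paper's argument: combine Proposition~\ref{prop:schur-cubic-control}, dropping the nonpositive term $-2\rho_t/N$, with Lemma~\ref{lem:mixed-cubic-baseline}. Your separate check of $N=1$, where the Schur remainder is an empty sum, covers the $N\ge2$ hypothesis of the proposition, which the paper's one-line proof leaves implicit.
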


\begin{proof}
Combine Proposition~\ref{prop:schur-cubic-control} with
Lemma~\ref{lem:mixed-cubic-baseline}, and discard the nonpositive term $-2\rho_t/N$.
\end{proof}

\begin{remark}[Comparison with the cubic criterion]\label{rem:remainder-gap}
Corollary~\ref{lem:schur-overlap-bound} is deliberately crude: it gives only
$|\mathfrak R_N^{\mathrm{Schur}}|\lesssim\rho_t(1+\chi_N)$, whereas the sufficient pointwise condition \eqref{eq:mixed-cubic-control} gains the factor $(\chi_{\rm SG}/N)^{1/2}$ and implies the integrated criterion.  When $\chi_{\rm SG}\asymp N^{1/3}$, the required gain
$(\chi_{\rm SG}/N)^{1/2}\asymp N^{-1/3}$ is precisely the root-mean-square
scale of a same-system overlap.  Thus the condition asks that the fixed-site
factor acquire the global overlap scale after averaging over the incident site
and the partial-covariance interpolation.
\end{remark}

\section{High-temperature validation of the incident-edge identity}
\label{app:high-temperature}

The critical argument uses $\beta=1$, but the incident-edge computation
tracks $\beta$ without structural change.  This appendix records the
general-$\beta$ identity and checks the remainder decomposition in the
classical high-temperature phase, where the limiting free-energy variance is
known~\cite{AizenmanLebowitzRuelle1987,CometsNeveu1995,GuerraToninelli2002CLT}.

For the remainder of this appendix, $\langle\cdot\rangle_t$ is the OU-coupled
product Gibbs expectation at inverse temperature $\beta$, and
\[
  \chi_N(t;\beta):=N\E\langle R_{1,2}^2\rangle_t,
  \qquad \chi_{\rm SG}(N;\beta):=\chi_N(0;\beta).
\]
When the Hamiltonian correlation itself is the variable, we write
$\widehat\chi_N(\rho;\beta)$, so that
$\chi_N(t;\beta)=\widehat\chi_N(e^{-t};\beta)$.
Define the general-$\beta$ per-pair remainders by
\eqref{eq:R-schur}--\eqref{eq:R-interp}, with all two-point functions taken at
inverse temperature $\beta$ and the prefactor $\rho_t/N$ replaced by
$\beta^2\rho_t/N$.  Their global sums are denoted
$\mathfrak R_N^{\rm Schur}(t;\beta)$ and
$\mathfrak R_N^{\rm interp}(t;\beta)$, and
$\mathfrak R_N(t;\beta)$ is their sum.

\begin{proposition}[Incident-edge identity at general $\beta$]
\label{prop:ht-identity}
For every $\beta>0$, $N\ge2$, and $t\ge0$,
\begin{equation}\label{eq:ht-identity}
  \theta_N^{(\beta)}(t)\chi_N(t;\beta)
  =1+\frac{\beta^2\rho_t}{N}
   +\mathfrak R_N^{\rm Schur}(t;\beta)
   +\mathfrak R_N^{\rm interp}(t;\beta),
  \qquad
  \theta_N^{(\beta)}(t)
  :=1-\beta^2\rho_t\left(1-\frac2N\right).
\end{equation}
Both remainders are nonpositive.  Therefore, whenever
$\theta_N^{(\beta)}(t)>0$,
\begin{equation}\label{eq:ht-pointwise}
  \chi_N(t;\beta)
  \le\frac{1+\beta^2\rho_t/N}
           {1-\beta^2\rho_t(1-2/N)}.
\end{equation}
Positivity is automatic for $0<\beta\le1$.  For $N>2$ and $\beta>1$ it
holds when $\rho_t<[\beta^2(1-2/N)]^{-1}$. For $N=2$ one has
$\theta_2^{(\beta)}(t)\equiv1$.
\end{proposition}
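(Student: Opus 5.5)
The plan is to rerun the critical-temperature derivation of Proposition~\ref{prop:Qe-direct-ladder} with $\beta$ kept explicit, and to check that each of its three ingredients scales in $\beta$ exactly as the statement predicts.

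\emph{Step 1: the incident-edge identity at general $\beta$.} Lemma~\ref{lem:incident-price} carries over with no change of structure. At $h=0$ the parity argument still gives $\Phi_{ij}(0)=0$. Price's formula is then applied to the cross-covariance of the edges in $I_j$. The only new feature is the single-system derivative \eqref{eq:single-system-deriv}, which now has prefactor $\beta/\sqrt N$. Each of the two factors contributes one such prefactor, so
\[
C^e_{ij}(t)=\frac{\beta^2\rho_t}{N}\sum_{k\ne j}\int_0^1\E_{j,s\rho_t}\bigl[\widehat G^{(j),1}_{ik}\widehat G^{(j),2}_{ik}\bigr]\,ds .
\]

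\emph{Step 2: decomposition and summation.} Expand the Schur complements as in \eqref{eq:schur-expand} and replace the partial-covariance kernel term by its full-OU value. This gives
\[
C^e_{ij}=\frac{\beta^2\rho_t}{N}\sum_{k\ne j}C^e_{ik}+\mathcal R^{\rm Schur}_{ij}+\mathcal R^{\rm interp}_{ij},
\]
where the remainders are defined with the prefactor $\beta^2\rho_t/N$. The diagonal normalization $C^e_{jj}=1$ of Lemma~\ref{lem:cross-only-vertex} does not depend on $\beta$. The exclusion bookkeeping is also unchanged: $\sum_{i\ne j}\sum_{k\ne j}C^e_{ik}=T-2S_j+1$ with $T=N\chi_N(t;\beta)$. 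Summing over $j$ and dividing by $N^2$, the kernel contributes $\beta^2\rho_t(1-2/N)\chi_N+\beta^2\rho_t/N$. Moving the $\chi_N$ term to the left-hand side gives \eqref{eq:ht-identity}, with $\theta^{(\beta)}_N(t)=1-\beta^2\rho_t(1-2/N)$.

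\emph{Step 3: signs of the remainders.} The proof of Lemma~\ref{lem:remainder-signs} never used $\beta=1$.
\begin{itemize}
\item \emph{Schur remainder.} At any temperature $G(J)$ is a unit-diagonal correlation matrix. The bound $0\preceq\widehat G^{(j)}\preceq G^{\backslash j}$ then holds, and the Schur product theorem applied to the all-ones quadratic form gives the global sign.
\item \emph{Interpolation remainder.} Lemma~\ref{lem:partial-hermite} is a statement about arbitrary $L^2$ functions of the disorder. Applying it to $f=G_{ik}$ at inverse temperature $\beta$ shows that every summand is nonpositive.
\end{itemize}
The positive prefactor $\beta^2$ changes neither sign.

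\emph{Step 4: the pointwise bound and the positivity conditions.} Dropping the two remainders and dividing by $\theta^{(\beta)}_N(t)$, whenever it is positive, gives \eqref{eq:ht-pointwise}. The positivity claims are elementary.
\begin{itemize}
\item For $0<\beta\le1$ and $N\ge2$: since $\rho_t\le1$, we have $\beta^2\rho_t(1-2/N)<1$.
\item For $N>2$ and $\beta>1$: positivity is equivalent to $\rho_t<[\beta^2(1-2/N)]^{-1}$.
\item For $N=2$: the factor $1-2/N$ vanishes, so $\theta^{(\beta)}_2\equiv1$.
\end{itemize}

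No step is a real obstacle. The one point needing care is confirming that the parity argument giving $\Phi_{ij}(0)=0$ and the Gibbs-covariance derivative formula really are valid at every $\beta$. Both rely only on spin-flip symmetry and on the Boltzmann weight $e^{\beta H}$, so they hold for all $\beta$, and the rest is bookkeeping of the factor $\beta^2$.
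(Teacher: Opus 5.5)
Your proposal is correct and follows essentially the same route as the paper. The paper likewise notes that the Gibbs derivative \eqref{eq:single-system-deriv} acquires a factor $\beta$, so Price's theorem contributes $\beta^2\rho_t/N$; that the exclusion bookkeeping of Proposition~\ref{prop:Qe-direct-ladder} then yields $\theta_N^{(\beta)}(t)$ and the numerator correction $\beta^2\rho_t/N$; and that the parity, Schur-complement, Schur-product and Hermite--Mehler sign arguments do not depend on $\beta$. Your explicit check of the three positivity cases is a minor elaboration that the paper leaves implicit.
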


\begin{proof}
The Gibbs derivative \eqref{eq:single-system-deriv} acquires a factor $\beta$,
so Price's theorem contributes $\beta^2\rho_t/N$ instead of $\rho_t/N$.
The exclusion accounting in Proposition~\ref{prop:Qe-direct-ladder} then
produces $\beta^2\rho_t(1-2/N)\chi_N(t;\beta)$ and the numerator correction
$\beta^2\rho_t/N$.  The zero-field parity, Schur-complement, Schur-product,
and Hermite--Mehler arguments are independent of $\beta$, so the remainder
signs are unchanged.  Division by the denominator gives
\eqref{eq:ht-pointwise} precisely where it is positive.
\end{proof}

\begin{theorem}[Variance and susceptibility at every temperature]
\label{prop:finite-beta-bound}
For every $N\ge1$, $\beta>0$, and $h=0$, put
$q=\beta^2$ and $\chi=\chi_{\rm SG}(N;\beta)$. Then
\begin{align}
  2N^2\Var F_N(\beta)
    &\le\log\chi+1+(q-1)\chi,\label{eq:finite-beta-goe}\\
  2N^2\Var F_N^{\rm off}(\beta)
    &\le\log\chi+(q-1)(\chi-1).\label{eq:finite-beta-off}
\end{align}
For $0<\beta<1$, one also has the exact finite-size bounds
\begin{equation}\label{eq:ht-sharp-finite}
  2N^2\Var F_N(\beta)\le-\log(1-\beta^2),
  \qquad
  2N^2\Var F_N^{\rm off}(\beta)
    \le-\log(1-\beta^2)-\beta^2.
\end{equation}
\end{theorem}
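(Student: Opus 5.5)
The plan is to repeat the argument of Proposition~\ref{prop:exact-critical-gap} at general $\beta$. By Theorem~\ref{thm:var-identity} and the substitution $r=e^{-t}$,
\[
  2N^2\Var F_N(\beta)=q\int_0^\infty e^{-t}\chi_N(t;\beta)\,\dd t
  =q\int_0^1\widehat\chi_N(r;\beta)\,\dd r .
\]
Since Lemma~\ref{lem:GOE-vs-offdiag} shifts $2N^2\Var$ by exactly $q$, and
$\log\chi+1+(q-1)\chi-q=\log\chi+(q-1)(\chi-1)$, the bound \eqref{eq:finite-beta-off} is equivalent to \eqref{eq:finite-beta-goe}. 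The same shift turns the first bound in \eqref{eq:ht-sharp-finite} into the second. It therefore suffices to prove the GOE statements. The case $N=1$ is trivial, since then $\widehat\chi_1\equiv1=\chi$.

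Two pointwise bounds on $\widehat\chi_N(r;\beta)$ are available.
\begin{itemize}
\item The Hermite--Mehler monotonicity of Lemma~\ref{lem:chi-CS} does not use $\beta=1$, so it gives $\widehat\chi_N(r;\beta)\le\chi$ for all $r\in[0,1]$.
\item Proposition~\ref{prop:ht-identity} gives
\[
  \widehat\chi_N(r;\beta)\le P(r):=\frac{1+qr/N}{1-qr(1-2/N)}
\]
wherever the denominator is positive.
\end{itemize}
Multiplying out as in Proposition~\ref{prop:exact-critical-gap}, the inequality $(1+a)(1-qr)\le 1-qr+2a$ with $a=qr/N$ shows $P(r)\le 1/(1-qr)$ whenever $qr<1$. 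For any cut point $s\in[0,1]$ with $qs<1$ this yields
\[
  2N^2\Var F_N(\beta)\le q\int_0^s\frac{\dd r}{1-qr}+q\chi(1-s)=-\log(1-qs)+q\chi(1-s).
\]
The natural choice is $s=r_*:=(1-\chi^{-1})/q$, which makes the two bounds meet. It gives exactly $\log\chi+q\chi-\chi+1$, i.e.\ \eqref{eq:finite-beta-goe}, provided $r_*\le1$, that is $q\ge1-\chi^{-1}$. In particular this settles every $\beta\ge1$.

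For $0<\beta<1$, integrating $P\le(1-qr)^{-1}$ over all of $[0,1]$ gives the first bound in \eqref{eq:ht-sharp-finite} at once. The remaining case, $q<1-\chi^{-1}$ (equivalently $(1-q)\chi>1$), is where I expect the main obstacle. Here the crude profile $1/(1-qr)$ never reaches $\chi$ on $[0,1]$. Writing $x=(1-q)\chi>1$, the two available routes fall short:
\begin{itemize}
\item the crude integral $-\log(1-q)=\log\chi-\log x$ falls short of the target $\log\chi+1-x$, because $\log x\le x-1$;
\item the bound $q\chi$ from monotonicity alone falls short too.
\end{itemize}
So the slack between $P$ and $1/(1-qr)$ has to be used.

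The key fact I would try first is that $\chi=\widehat\chi_N(1;\beta)\le P(1)$ by \eqref{eq:ht-pointwise} at $t=0$. Hence $x\le(1-q)P(1)=1+O(q/N)$, so this case occurs only in a thin layer $1<x\le 1+O(1/N)$. There I would work with the exact profile $P$ instead of its crude majorant. Let $r_\sharp\in[0,1]$ be the crossing point $P(r_\sharp)=\chi$; it exists because $P(0)=1\le\chi\le P(1)$. Integrate $qP$ in closed form on $[0,r_\sharp]$, whose antiderivative is a linear term plus $\frac{N}{N-2}\log$ of the denominator, add $q\chi(1-r_\sharp)$, and compare with $\log\chi+1+(q-1)\chi$ as a function of $\chi\in[1/(1-q),P(1)]$. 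At $\chi=1/(1-q)$ the two sides agree up to favorable finite-$N$ terms, so it remains to check the sign of the derivative in $\chi$ across this short interval.

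If that comparison turns out to be delicate, the fallback is to verify the elementary inequality directly. Since $x-1=O(1/N)$, the deficit $x-1-\log x=O(N^{-2})$ is second order. It should then be absorbed by the first-order gain $\int_0^1 q\bigl((1-qr)^{-1}-P(r)\bigr)\,\dd r\gtrsim q^2/N$ that the exact profile provides over the crude one.
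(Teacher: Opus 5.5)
Your argument is the paper's. It uses the OU identity in the form $2N^2\Var F_N(\beta)=q\int_0^1\widehat\chi_N(r;\beta)\,\dd r$, the two pointwise bounds $\widehat\chi_N\le\chi$ and $\widehat\chi_N\le P(r)\le(1-qr)^{-1}$, and the cut at $r_*=(1-\chi^{-1})/q$. The GOE shift by $q$, the trivial case $N=1$, and integration over all of $[0,1]$ for \eqref{eq:ht-sharp-finite} also match. The one defect is the case $q<1-\chi^{-1}$, which you single out as the main obstacle. As written, the proposal does not settle it. It leaves an unperformed sign check on a derivative in $\chi$, with a fallback absorption heuristic whose claimed orders of magnitude are not proved.

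That case is in fact empty, and you already have both ingredients. For $q<1$, $\chi=\widehat\chi_N(1;\beta)\le P(1)$, and your own inequality $P(r)\le(1-qr)^{-1}$ at $r=1$ gives $P(1)\le(1-q)^{-1}$. Explicitly,
\[
  1-(1-q)P(1)=\frac{q(1+q)/N}{1-q+2q/N}\ge0 .
\]
Hence $x=(1-q)\chi\le1$ whenever $q<1$, so $r_*\le1$. Together with your observation for $q\ge1$, this gives $r_*\le1$ for every $\beta>0$. Your cut-point computation then yields \eqref{eq:finite-beta-goe} in all cases. Your estimate $x\le(1-q)P(1)=1+O(q/N)$ discarded the sign of the $O(q/N)$ correction, which is favorable. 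This one line is exactly how the paper closes the argument. With it, the crossing point $r_\sharp$ and the fallback comparison are unnecessary.
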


\begin{proof}
Suppose $N\ge2$. For $qr<1$, Proposition~\ref{prop:ht-identity} gives
\begin{equation}\label{eq:simple-beta-profile}
  \widehat\chi_N(r;\beta)
  \le\frac{1+qr/N}{1-qr+2qr/N}\le\frac1{1-qr}.
\end{equation}
The second inequality follows since the difference of the last two
expressions, in the order right minus left, equals
\[
  \frac{qr(1+qr)}{N(1-qr)(1-qr+2qr/N)}\ge0.
\]
The Hermite argument of Lemma~\ref{lem:chi-CS} applies at every
$\beta$ and gives $\widehat\chi_N(r;\beta)\le\chi$.
Set $r_*=(1-\chi^{-1})/q$. Since $\chi\ge1$, $r_*\ge0$.
If $q\ge1$, then $r_*<1$. If $q<1$,~\eqref{eq:simple-beta-profile}
at $r=1$ gives $\chi\le(1-q)^{-1}$, hence again $r_*\le1$.
On $[0,r_*]$ we have $qr\le1-\chi^{-1}<1$, so no division by a
nonpositive denominator occurs, including when $\beta>1$.
Integrating the two bounds on their respective intervals yields
\[
\begin{split}
  2N^2\Var F_N(\beta)
  &=q\int_0^1\widehat\chi_N(r;\beta)\,\dd r\\
  &\le q\int_0^{r_*}\frac{\dd r}{1-qr}+q\chi(1-r_*)
   =\log\chi+1+(q-1)\chi.
\end{split}
\]
Subtracting the GOE diagonal variance proves~\eqref{eq:finite-beta-off}.
When $q<1$, integrating~\eqref{eq:simple-beta-profile} on the whole
interval $[0,1]$ gives~\eqref{eq:ht-sharp-finite} directly.
For $N=1$, $\chi=1$, $\Var F_1(\beta)=q/2$, and
$\Var F_1^{\rm off}(\beta)=0$. All claims follow, using
$q\le-\log(1-q)$ when $q<1$.
\end{proof}

\begin{remark}[A sufficient susceptibility bound near criticality]
\label{rem:both-temperature-sides}
If $\beta_N^2-1=O(N^{-1/3})$ and
$\chi_{\rm SG}(N;\beta_N)=O(N^{1/3})$, then
\eqref{eq:finite-beta-off} gives
$\Var F_N^{\rm off}(\beta_N)\le\tfrac16(\log N)/N^2+O(N^{-2})$
on either side of criticality. This implication is unconditional as
an inequality. The required susceptibility estimate throughout that
window is an additional input, not proved here. Proposition~\ref{prop:uniform-window-variance}
obtains the variance conclusion by a separate entropy argument.
The full limiting overlap law in that window is discussed
in~\cite[Section~1.4]{DuHuang2026Overlap}.
\end{remark}

For $0<\beta<1$ define the remainder integrated over $t\ge0$
\begin{equation}\label{eq:ht-remainder}
  D_N(\beta):=\int_0^\infty e^{-t}
  \frac{-\mathfrak R_N(t;\beta)}{\theta_N^{(\beta)}(t)}\,\dd t\ge0.
\end{equation}
Here $\theta_N^{(\beta)}(t)\ge1-\beta^2$, so no cutoff is needed.

\begin{theorem}[High-temperature variance identity and bounds]
\label{thm:ht-variance}
For $N\ge4$ and $0<\beta<1$, the following statements hold
unconditionally.  One may take the absolute constant below to be $C_0=9$.
\begin{enumerate}[label=\textup{(\roman*)},leftmargin=2.5em]
\item
\[
  \sup_{t\ge0}\chi_N(t;\beta)\le\frac{2}{1-\beta^2}.
\]
\item
\begin{equation}\label{eq:ht-remainder-identity}
  2N^2\Var F_N(\beta)
  =\log\frac1{1-\beta^2}-\beta^2D_N(\beta)+\mathcal E_N(\beta),
  \qquad
  |\mathcal E_N(\beta)|\le\frac{C_0}{N(1-\beta^2)}.
\end{equation}
\item
\begin{equation}\label{eq:ht-upper}
  2N^2\Var F_N(\beta)
  \le\log\frac1{1-\beta^2}.
\end{equation}
\item
\begin{equation}\label{eq:ht-remainder-bounds}
  0\le D_N(\beta)\le\frac{2}{1-\beta^2},
  \qquad
  \int_0^\infty e^{-t}
  \frac{-\mathfrak R_N^{\rm interp}(t;\beta)}
       {\theta_N^{(\beta)}(t)}\,\dd t
  \le\frac{4\beta^2}{N(1-\beta^2)^2}.
\end{equation}
\end{enumerate}
Thus the analog with integration over $t\ge0$ of Assumption~\ref{ass:SK-remainder-closure} is
unconditionally true for every fixed $\beta<1$.
\end{theorem}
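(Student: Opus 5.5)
The plan is to work throughout with the general-$\beta$ identity of Proposition~\ref{prop:ht-identity}, writing $q=\beta^2<1$, $\theta=\theta_N^{(\beta)}(t)=1-q\rho_t(1-2/N)\ge1-q$, and using the OU variance identity in the GOE form $2N^2\Var F_N(\beta)=q\int_0^\infty e^{-t}\chi_N(t;\beta)\,\dd t$.

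\emph{Part (i).} This follows from \eqref{eq:ht-pointwise}. Its numerator satisfies $1+q\rho_t/N\le 2$ and its denominator is at least $1-q$, so $\chi_N(t;\beta)\le 2/(1-\beta^2)$ for every $t\ge0$.

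\emph{Part (ii).} Divide \eqref{eq:ht-identity} by $\theta$, multiply by $qe^{-t}$, and integrate over $t\ge0$. Substituting $\rho=e^{-t}$, this gives
\[
2N^2\Var F_N(\beta)=q\int_0^1\frac{1+q\rho/N}{1-q\rho(1-2/N)}\,\dd\rho-qD_N(\beta).
\]
The explicit integral should be compared with $q\int_0^1(1-q\rho)^{-1}\dd\rho=\log\frac1{1-q}$.
\begin{itemize}
\item The numerator correction contributes at most $q^2/(N(1-q))$.
\item For the denominator, $\frac1{1-q\rho(1-2/N)}-\frac1{1-q\rho}$ is nonpositive and of absolute size at most $\frac{2q\rho}{N(1-q\rho)^2}$. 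Integrating gives a bound of order $\frac{2}{N(1-q)}$.
\end{itemize}
Collecting these terms gives $\mathcal E_N(\beta)$ with $|\mathcal E_N|\le C_0/(N(1-q))$. The constant $9$ only needs crude bookkeeping, with $N\ge4$ used to bound $1-2/N$ from below.

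\emph{Part (iii).} This is \eqref{eq:ht-sharp-finite}, already proved in Theorem~\ref{prop:finite-beta-bound}. Alternatively, it follows from the chain $\chi\le(1-q\rho)^{-1}$ in \eqref{eq:simple-beta-profile}.

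\emph{Part (iv).} The bound $D_N\ge0$ is immediate from the remainder signs. For the upper bound, the identity gives
\[
-\mathfrak R_N/\theta=(1+q\rho_t/N)/\theta-\chi_N\le 2/(1-q).
\]
Integrating against $e^{-t}$ gives $D_N\le 2/(1-q)$.

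For the interpolation part, repeat Lemma~\ref{lem:interp-pointwise-app} with the factor $q$. This gives $-\mathfrak R_N^{\rm interp}\le \frac{2q\rho^2}{N}\partial_\rho\widehat\chi_N(\rho;\beta)$, so
\[
\int_0^\infty e^{-t}\frac{-\mathfrak R_N^{\rm interp}}{\theta}\,\dd t\le \frac{2q}{N(1-q)}\int_0^1\rho\,\partial_\rho\widehat\chi_N\,\dd\rho\le\frac{2q}{N(1-q)}\,\widehat\chi_N(1;\beta).
\]
Here the last step bounds $\rho\le1$ and uses $\partial_\rho\widehat\chi_N\ge0$, so the integral is at most $\widehat\chi_N(1;\beta)-\widehat\chi_N(0;\beta)\le\widehat\chi_N(1;\beta)$. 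Part (i) then bounds $\widehat\chi_N(1;\beta)\le 2/(1-q)$, which gives $4q/(N(1-q)^2)$.

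The main obstacle is the constant-tracking in (ii), namely obtaining the denominator comparison with an explicit constant uniformly as $q\uparrow1$. The approach is the elementary bound above, $\int_0^1 \frac{2q\rho}{N(1-q\rho)^2}\dd\rho\le \frac{2}{N(1-q)}$, followed by summing all contributions.
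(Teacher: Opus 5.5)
Your proposal is correct and follows the paper's proof essentially step for step: (i) comes from the pointwise bound \eqref{eq:ht-pointwise}, (ii) from integrating the identity against $e^{-t}$, (iii) from \eqref{eq:ht-sharp-finite}, and (iv) from the remainder signs together with the $\beta^2$-weighted form of Lemma~\ref{lem:interp-pointwise-app} and part (i). The only difference is bookkeeping in (ii). You compare the integrands $\frac{1}{1-q\rho(1-2/N)}$ and $\frac{1}{1-q\rho}$ directly, whereas the paper evaluates $\frac{1}{1-2/N}\log\frac{1}{1-b}$ and uses $(1-2/N)^{-1}\le 1+4/N$; your route gives the smaller bound $|\mathcal E_N|\le 3/(N(1-\beta^2))$, well within $C_0=9$.
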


\begin{proof}
Write $b=\beta^2(1-2/N)$ and use
$e^{-t}\,\dd t=-\dd\rho$ with $\rho=e^{-t}$, reversing the integration
limits.  The pointwise bound gives
\[
  \chi_N(t;\beta)
  \le\frac{1+\beta^2/N}{1-b}
  \le\frac{2}{1-\beta^2},
\]
which proves (i).

From \eqref{eq:ht-identity} and Theorem~\ref{thm:var-identity},
\begin{equation}\label{eq:ht-split}
  2N^2\Var F_N(\beta)
  =\beta^2\int_0^1\frac{1+\beta^2\rho/N}{1-b\rho}\,\dd\rho
   -\beta^2D_N(\beta).
\end{equation}
The main part is
\[
  \frac{1}{1-2/N}\log\frac1{1-b}.
\]
Using
$1-b=1-\beta^2+2\beta^2/N$,
$(1-2/N)^{-1}\le1+4/N$, and
$\log(1/(1-x))\le x/(1-x)$ gives an error at most
$8/[N(1-\beta^2)]$.  The numerator correction is at most
$1/[N(1-\beta^2)]$.  This proves (ii) with $C_0=9$, and (iii) follows from the sharper bound~\eqref{eq:ht-sharp-finite}.

For the total integrated remainder, the identity, the signs, and $\chi_N\ge0$ give
\[
  0\le-\mathfrak R_N(t;\beta)
  \le1+\frac{\beta^2\rho_t}{N}.
\]
Therefore
\[
  D_N(\beta)
  \le\left(1+\frac{\beta^2}{N}\right)
       \frac1b\log\frac1{1-b}
  \le\frac{2}{1-\beta^2}.
\]
Finally, tracking the factor $\beta^2$ through
Lemma~\ref{lem:interp-pointwise-app} gives
\[
  0\le-\mathfrak R_N^{\rm interp}(t;\beta)
  \le\frac{2\beta^2\rho_t^2}{N}
       \partial_\rho\widehat\chi_N(\rho_t;\beta).
\]
Since $\theta_N^{(\beta)}\ge1-\beta^2$,
$\partial_\rho\widehat\chi_N\ge0$, and
$\widehat\chi_N(1;\beta)=\chi_{\rm SG}(N;\beta)$, integration gives
\[
  \frac{2\beta^2\chi_{\rm SG}(N;\beta)}{N(1-\beta^2)}
  \le\frac{4\beta^2}{N(1-\beta^2)^2},
\]
by (i).
\end{proof}

\begin{corollary}[Variance upper bound in the near-critical window]
\label{cor:ht-window}
If $1-\beta_N^2=cN^{-1/3}$ for fixed $c>0$, then
\[
  \Var F_N(\beta_N)
  \le\frac16\frac{\log N}{N^2}+O_c(N^{-2}).
\]
This matches the leading coefficient in the near-critical variance formula~\cite{DeyKang2026} as an upper bound only. It
does not give the lower bound or central limit theorem from that paper.
\end{corollary}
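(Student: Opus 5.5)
We apply the finite-temperature bound \eqref{eq:ht-sharp-finite} of Theorem~\ref{prop:finite-beta-bound} at $\beta=\beta_N$. Since $1-\beta_N^2=cN^{-1/3}\in(0,1)$ for all large $N$, we have $0<\beta_N<1$, and the bound gives, for every $N\ge1$,
\[
  2N^2\Var F_N(\beta_N)\le-\log(1-\beta_N^2)
  =-\log\bigl(cN^{-1/3}\bigr)=\tfrac13\log N-\log c.
\]
Dividing by $2N^2$ yields
\[
  \Var F_N(\beta_N)\le\frac16\frac{\log N}{N^2}-\frac{\log c}{2N^2},
\]
so the error is $O_c(N^{-2})$ with explicit constant $-\tfrac12\log c$. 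For the finitely many small $N$ with $cN^{-1/3}\ge1$ the hypothesis is vacuous or the statement is trivially absorbed into the constant, so no care is needed there.

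An alternative route avoids the sharp finite-size inequality. It uses Theorem~\ref{thm:ht-variance}(iii), which states $2N^2\Var F_N(\beta)\le\log\frac1{1-\beta^2}$ for $N\ge4$, and gives the same conclusion. Either way, the entire content lies in the unconditional profile bound \eqref{eq:simple-beta-profile}, $\widehat\chi_N(r;\beta)\le(1-\beta^2r)^{-1}$, integrated against the OU identity. That bound is already established.

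I expect no genuine obstacle. The only points to check are that the GOE convention is the one in the statement, where \eqref{eq:ht-sharp-finite} gives the first inequality directly, and that the off-diagonal version follows with the even smaller right side $-\log(1-\beta_N^2)-\beta_N^2$. I would close by remarking that no lower bound is claimed: the signed remainder $D_N(\beta_N)$ in \eqref{eq:ht-remainder-identity} is only bounded by $2/(1-\beta_N^2)\asymp N^{1/3}$, which is far too weak to recover the matching lower bound of~\cite{DeyKang2026}.
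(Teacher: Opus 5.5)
Your proof is correct and is essentially the paper's argument: the paper substitutes $1-\beta_N^2=cN^{-1/3}$ into \eqref{eq:ht-upper}, which is itself derived from \eqref{eq:ht-sharp-finite}, and then divides by $2N^2$. Citing \eqref{eq:ht-sharp-finite} directly, as you do, has the small advantage that it holds for every $N\ge1$ rather than only for $N\ge4$.
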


\begin{proof}
Insert $1-\beta_N^2=cN^{-1/3}$ into \eqref{eq:ht-upper} and divide by
$2N^2$.
\end{proof}

\begin{remark}[Near-critical counterparts of the critical coefficients]
\label{rem:ht-coefficient-dictionary}
Corollary~\ref{cor:ht-window} is a consequence of
\eqref{eq:ht-upper}: if $1-\beta_N^2=cN^{-\gamma}$ with $\gamma\in(0,1)$ and
$c>0$ fixed, then the logarithm contributes $\gamma\log N+O_c(1)$ and the bound has no finite-size error, so
\[
  \Var F_N(\beta_N)
  \le\frac{\gamma}{2}\,\frac{\log N}{N^2}+O_c(N^{-2}).
\]
The window exponent $\gamma$ and the upper-bound coefficient $\gamma/2$ are
therefore paired exactly as the susceptibility exponent and the variance
upper-bound coefficient are paired at exact criticality
(Theorem~\ref{thm:critical-variance-upper}). The susceptibility scale
$(1-\beta_N^2)^{-1}$ plays the role of $\chi_{\rm SG}(N)$.  In particular,
$\gamma=1/3$ is Corollary~\ref{cor:ht-window}, matching the coefficient $1/6$ in~\cite{DeyKang2026}, while $\gamma=1/2$ gives
$\Var F_N(\beta_N)\le\frac14(\log N)/N^2+O_c(N^{-2})$, the near-critical
counterpart of the coefficient proved at exact criticality in~\cite{Schertzer2026CriticalSK} and obtained there from
the classical overlap bound in
Remark~\ref{rem:dictionary-instances}.
\end{remark}

\begin{corollary}[Classical high-temperature fluctuations force remainder vanishing]
\label{cor:ht-vanishing}
For every fixed $\beta\in(0,1)$,
\[
  D_N(\beta)\longrightarrow0.
\]
\end{corollary}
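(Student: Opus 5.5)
The plan is to combine the exact high-temperature identity \eqref{eq:ht-remainder-identity} of Theorem~\ref{thm:ht-variance} with the classical fluctuation theory at fixed $\beta<1$. Solving \eqref{eq:ht-remainder-identity} for the remainder gives
\[
  \beta^2 D_N(\beta)=\log\frac1{1-\beta^2}-2N^2\Var F_N(\beta)+\mathcal E_N(\beta),
  \qquad |\mathcal E_N(\beta)|\le\frac{9}{N(1-\beta^2)}.
\]
Since $\beta$ is fixed in $(0,1)$, the error term tends to zero. Because $D_N(\beta)\ge0$ and $\beta^2>0$, it suffices to prove
\[
  \liminf_{N\to\infty}2N^2\Var F_N(\beta)\ge\log\frac1{1-\beta^2}.
\]
The matching upper bound is already \eqref{eq:ht-upper}.

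For the lower bound I will invoke the classical high-temperature results \cite{AizenmanLebowitzRuelle1987,CometsNeveu1995,GuerraToninelli2002CLT}, which give the variance limit of the unnormalized log-partition function at $h=0$, $\beta<1$. The main step is to fix conventions carefully, since the cited results are usually written for the off-diagonal Hamiltonian. In our normalization that limit reads
\[
  N^2\Var F_N^{\rm off}(\beta)\to\tfrac12\bigl(-\log(1-\beta^2)-\beta^2\bigr).
\]
Translating by Lemma~\ref{lem:GOE-vs-offdiag}, which adds exactly $\beta^2/(2N^2)$, gives
\[
  2N^2\Var F_N(\beta)\to-\log(1-\beta^2).
\]
This is consistent with the finite-size bound \eqref{eq:ht-sharp-finite}, which checks the constant.

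To be safe I would cross-check the constant against the OU representation. Theorem~\ref{thm:var-identity} gives
\[
  2N^2\Var F_N(\beta)=\beta^2\int_0^1\widehat\chi_N(r;\beta)\,\dd r.
\]
It is also classical that $\widehat\chi_N(r;\beta)\to(1-\beta^2r)^{-1}$ for each fixed $r\in[0,1]$. This follows from the same quadratic replica-coupling and cavity arguments of \cite{GuerraToninelli2002QRC,Talagrand2011Vol1}, which give $N\E\langle R_{1,2}^2\rangle_t\to(1-\beta^2\rho_t)^{-1}$ for coupled systems. Combined with the uniform bound (i) of Theorem~\ref{thm:ht-variance}, dominated convergence then yields the same limit $\log(1/(1-\beta^2))$.

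The main obstacle is only this bookkeeping: matching normalizations and diagonal conventions in the cited variance limit, or justifying the pointwise coupled-susceptibility limit. Once the limit is established, we get $\beta^2D_N(\beta)\to0$, and hence $D_N(\beta)\to0$.
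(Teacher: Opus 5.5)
\textbf{Where the gap is.} Your reduction is the same as the paper's. You solve \eqref{eq:ht-remainder-identity} for $\beta^2D_N(\beta)$, note that $\mathcal E_N(\beta)\to0$ at fixed $\beta$, and use $D_N\ge0$, so that only $\liminf_N 2N^2\Var F_N(\beta)\ge\log\frac1{1-\beta^2}$ is needed. Your constants and the diagonal shift $2N^2\Var F_N=2N^2\Var F_N^{\rm off}+\beta^2$ are also correct. The step that does not follow as written is the claim that the classical results ``give the variance limit.'' The results you cite are central limit theorems. They give $X_N:=\log Z_N^{\rm off}(\beta)-\log\E Z_N^{\rm off}(\beta)\Longrightarrow N(-V_\beta/2,V_\beta)$ with $V_\beta=\frac12\bigl(\log\frac1{1-\beta^2}-\beta^2\bigr)$. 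A distributional limit controls neither the second moments of $X_N$ nor its centering $\E X_N$. So the real obstacle is the passage from convergence in law to the variance, not the bookkeeping of normalizations.

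\textbf{How to close it.} The fix is short, and it is exactly what the paper does. Take an independent copy $X_N'$. Then $\Var X_N=\frac12\E(X_N-X_N')^2$, and $X_N-X_N'\Longrightarrow N(0,2V_\beta)$, which also disposes of the centering issue. The portmanteau (Fatou) inequality for the nonnegative continuous function $x\mapsto x^2$ gives $\liminf_N\Var X_N\ge V_\beta$, and this is all your reduction needs. The paper adds \eqref{eq:ht-sharp-finite} to obtain the full limit, which you can skip because $D_N\ge0$.

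\textbf{About the cross-check.} Do not lean on it. The pointwise limit $\widehat\chi_N(r;\beta)\to(1-\beta^2r)^{-1}$ for systems with disorder correlation $r<1$ is not proved in the paper. The quadratic-coupling and cavity computations you cite are carried out for replicas of a single system. As an alternative proof, this route would therefore rest on an extra claim needing its own proof. If that claim were available, Fatou's lemma would indeed finish the argument, as would dominated convergence with Theorem~\ref{thm:ht-variance}(i).
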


\begin{proof}
Fix $0<\beta<1$ and put
\[
 V_\beta:=\frac12\left(\log\frac1{1-\beta^2}-\beta^2\right),
 \qquad
 X_N:=\log Z_N^{\rm off}(\beta)-\log\E Z_N^{\rm off}(\beta).
\]
The classical high-temperature central limit theorem
\cite{AizenmanLebowitzRuelle1987,CometsNeveu1995} gives
$X_N\Longrightarrow N(-V_\beta/2,V_\beta)$.
For an independent copy $X_N'$ it follows that
$X_N-X_N'\Longrightarrow N(0,2V_\beta)$.
Portmanteau's theorem applied to the nonnegative continuous function
$x^2$ gives
\[
 \liminf_{N\to\infty}\Var X_N
 =\frac12\liminf_{N\to\infty}\E(X_N-X_N')^2
 \ge V_\beta.
\]
The sharp finite-size bound~\eqref{eq:ht-sharp-finite} gives
$\Var X_N\le V_\beta$, hence $\Var X_N\to V_\beta$.
Lemma~\ref{lem:GOE-vs-offdiag} then gives
$2N^2\Var F_N(\beta)\to\log(1/(1-\beta^2))$.
Equation~\eqref{eq:ht-remainder-identity}, whose error tends to zero at
fixed $\beta$, forces $\beta^2D_N(\beta)\to0$.
\end{proof}

\begin{remark}[GOE normalization and the classical constant]
The spin-independent GOE diagonal adds $\beta^2/2$ to the variance of the
unnormalized log-partition function.  It converts the off-diagonal constant
$\frac12(\log(1/(1-\beta^2))-\beta^2)$ into
$\frac12\log(1/(1-\beta^2))$, exactly the leading integral in
\eqref{eq:ht-split}.
\end{remark}

\begin{remark}[Degeneration at criticality]
The bounded-remainder estimate in \eqref{eq:ht-remainder-bounds} deteriorates like
$(1-\beta^2)^{-1}$. The upper bound~\eqref{eq:ht-upper} has no
finite-size error, but diverges as $\beta\uparrow1$. In the
decomposition~\eqref{eq:ht-remainder-identity}, the stated error bound
no longer tends to zero when $N(1-\beta^2)=O(1)$. At exact criticality,
\[
  \theta_N^{(1)}(t)=1-e^{-t}+\frac{2e^{-t}}N\asymp t+\frac1N
  \qquad(0\le t\le1),
\]
so the finite-$N$ gap does not vanish at $t=0$ but becomes singular in the
thermodynamic limit.  The cutoff $t_\ast\asymp\chi_{\rm SG}(N)^{-1}$ and the
critical-window remainder replace the fixed high-temperature gap.
\end{remark}

\section{Temperature extensions}\label{app:temperature-extensions}

This appendix gives two consequences of existing critical estimates.
The first extends the entropy proof of~\cite{YimingChen2026} to a bounded
temperature window. The second propagates ordinary overlap moments
on a smaller interval. Their distinct disorder averages are kept explicit.

\subsection{A uniform variance bound}

\begin{proposition}[Extension of the entropy argument]
\label{prop:uniform-window-variance}
For every fixed $B\ge0$, there is $C_B<\infty$ such that
\begin{equation}\label{eq:uniform-window-variance}
 \sup_{0\le\beta^2\le1+BN^{-1/3}}
 N^2\Var F_N^{\rm off}(\beta)\le\frac16\log N+C_B.
\end{equation}
\end{proposition}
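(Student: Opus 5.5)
The bound will be proved by splitting the range of $\beta^2$ into a subcritical part, handled by the finite-size bounds already proved, and the critical window $|\beta^2-1|\le BN^{-1/3}$, handled by redoing the entropy argument of~\cite[Section~3.1]{YimingChen2026} with every constant tracked uniformly in $\beta$.

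\emph{Subcritical part.} For $0\le\beta^2\le1-N^{-1/3}$ nothing new is needed. The off-diagonal half of~\eqref{eq:ht-sharp-finite} gives
\[
  2N^2\Var F_N^{\rm off}(\beta)\le-\log(1-\beta^2)-\beta^2\le\tfrac13\log N.
\]
This is exactly $\frac16\log N$ after dividing by $2$, with $C_B=0$ on this range. So only the window $1-N^{-1/3}\le\beta^2\le1+BN^{-1/3}$ remains. Enlarging $B$ if necessary, I would treat it as $|\beta^2-1|\le BN^{-1/3}$.

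\emph{Window part.} I would follow the structure of the entropy proof in~\cite{YimingChen2026}.
\begin{enumerate}[leftmargin=2.3em]
  \item Bound $\Var\log Z_N^{\rm off}(\beta)$ by a Gaussian-convexity/entropy inequality, not for $Z_N$ itself but for a truncated partition function $\widetilde Z_N(\beta)$. Here $\widetilde Z_N$ restricts the disorder to a typical event, or weights configurations by a planted measure, so that the replacement changes $\log Z_N$ by $O(1)$ in $L^2$.
  \item Reduce the remaining quantity to $\log\bigl(\E\widetilde Z_N^2/(\E\widetilde Z_N)^2\bigr)$. Through the planted (Gaussian synchronization) comparison, this becomes an annealed overlap integral of the form $\E\exp\bigl(\tfrac{N\beta^2}{2}R^2\bigr)$, restricted by the truncation.
\end{enumerate}
The task is to recheck every step with $\beta^2=1+\delta$, $|\delta|\le BN^{-1/3}$, in place of $\beta=1$, and to show the constants depend only on $B$.

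In the overlap region $|R|\lesssim N^{-1/3}$ this is straightforward. The extra factor $\exp\bigl(\tfrac{N\delta}{2}R^2\bigr)$ is $\exp(O(B))$ there. So the cycle/Hermite contributions that produce $\frac16\log N$ at $\beta=1$ change only by a multiplicative constant, i.e.\ an additive $O_B(1)$ after the logarithm. The critical random graph estimate of~\cite{JansonSpencer2007} is what controls the combinatorial (cycle and excess) counts in the planted comparison. It is stated uniformly for edge densities throughout the critical window $p=(1+\epsilon N^{-1/3})/N$, $|\epsilon|\le B$, which is exactly what the uniform-in-$\delta$ extension requires. I would invoke it in that uniform form rather than at the single point $\epsilon=0$.

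\emph{Main obstacle.} The difficulty is the large-overlap region on the supercritical side $\delta>0$. There the untruncated second moment is no longer comparable to the critical one. Indeed, the exponent $\tfrac{N\delta}{2}R^2-\tfrac{N}{12}R^4$ is maximised at $R^2=3\delta$, with value of order $B^2N^{1/3}$. So the crude Curie--Weiss second-moment bound blows up, and the truncation in step~1 must remove overlaps $|R|\gg N^{-1/3}$ with a cost that is $O_B(1)$ uniformly. I would first try the following:
\begin{enumerate}[leftmargin=2.3em]
  \item Choose the truncation event at the $\beta=1+BN^{-1/3}$ endpoint, so that one event serves the whole window.
  \item Bound the probability of its complement by the same Janson--Spencer tail estimates, which are uniform in the window.
  \item Show that on this event the planted overlap satisfies $\E\exp(cN^{2/3}R^2)=O_B(1)$ for some fixed $c>0$. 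This absorbs the factor $\exp(BN^{2/3}R^2/2)$ only for $B<2c$. For larger $B$ the exponential-moment argument alone does not close, and I would try splitting the window into pieces, iterating, or proving a stronger overlap tail.
\end{enumerate}
Once the truncated second moment is bounded by $\tfrac13\log N+O_B(1)$ uniformly, the supremum bound~\eqref{eq:uniform-window-variance} follows by combining it with the subcritical part.
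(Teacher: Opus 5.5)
Your subcritical reduction is correct. For $\beta^2\le1-N^{-1/3}$, the off-diagonal half of \eqref{eq:ht-sharp-finite} already gives $N^2\Var F_N^{\rm off}(\beta)\le\frac16\log N$.

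The gap is in the window, and it is the one you flag yourself. On the supercritical side, your truncated overlap bound absorbs $\exp(BN^{2/3}R^2/2)$ only when $B<2c$. For general fixed $B$ you list possible remedies (splitting, iterating, a stronger tail) but carry none of them out, so the statement is not proved for every $B$.

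The route also differs from the argument in \cite{YimingChen2026}, and it has a normalization problem. A Gaussian-convexity second-moment bound controls $N^2\Var F_N$ itself by $\log(\E\widetilde Z_N^2/(\E\widetilde Z_N)^2)$. So you would need that logarithm to be $\frac16\log N+O_B(1)$, not $\frac13\log N$. The untruncated critical value is $\frac14\log N+O(1)$, driven by overlaps of order $N^{-1/4}$. Your truncation would therefore have to do substantial work already at $\beta=1$, and you do not say which variance inequality applies to a truncated $\widetilde Z_N$. The cited argument instead uses the relative entropy $\mathcal E_N(\beta)=\E[L_{N,\beta}\log L_{N,\beta}]$, with $L_{N,\beta}=Z_N^{\rm off}(\beta)/\E Z_N^{\rm off}(\beta)$. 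It bounds $N^2\Var F_N^{\rm off}(\beta)\le2\mathcal E_N(\beta)$ for all $\beta$.

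The missing idea is that nothing has to be re-derived inside the window: you propagate the critical bound in $\beta$. Besides the entropy inequality, \cite{YimingChen2026} gives two inputs:
\begin{itemize}
\item the base estimate $\mathcal E_N(1-N^{-1/3})\le\frac1{12}\log N+C$;
\item for every $\beta\ge0$, the derivative bound $0\le\mathcal E_N'(\beta)\le\frac\beta2(\E|\mathcal C_{N,p_N(\beta)}(1)|-1)$.
\end{itemize}
Here $\mathcal C_{N,p}(1)$ is the component of vertex $1$ in $G(N,p)$, and $p_N(\beta)=\E\tanh^2(\gamma+\sqrt\gamma G)$ with $\gamma=\beta^2/N$.

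The argument then runs as follows.
\begin{enumerate}
\item From $\tanh^2x\le x^2$, $p_N(\beta)\le\gamma+\gamma^2\le N^{-1}+A_BN^{-4/3}$ on the whole range.
\item Janson--Spencer gives $\E|\mathcal C_{N,p}(1)|=N^{-1}\E\sum_i|\mathcal C_i|^2=O_A(N^{1/3})$ at $p=N^{-1}+AN^{-4/3}$, for every fixed real $A$ including $A>0$. By monotone coupling, $\mathcal E_N'=O_B(N^{1/3})$.
\item Integrate over $[1-N^{-1/3},(1+BN^{-1/3})^{1/2}]$, an interval of length at most $(1+B/2)N^{-1/3}$. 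This raises $\mathcal E_N$ by only $O_B(1)$.
\item Monotonicity of $\mathcal E_N$ covers all smaller $\beta$, so your subcritical split is not even needed. Hence $N^2\Var F_N^{\rm off}(\beta)\le\frac16\log N+C_B$.
\end{enumerate}

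The supercritical obstacle you describe never arises. The entropy derivative is controlled by a critical random-graph component size, coming from correlations averaged under the planted law, not by ordinary SK overlaps. This is also how the Janson--Spencer estimate actually enters, rather than through cycle counts in a second moment.
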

\begin{proof}
Let $L_{N,\beta}=Z_N^{\rm off}(\beta)/\E Z_N^{\rm off}(\beta)$ and
$\mathcal E_N(\beta)=\E[L_{N,\beta}\log L_{N,\beta}]$.
The inputs from~\cite[Section~3.1, Propositions~2--3 and Corollary~1]{YimingChen2026},
including its Gaussian convexity bound, are
\begin{align}
 N^2\Var F_N^{\rm off}(\beta)&\le2\mathcal E_N(\beta),
 \label{eq:entropy-var-extension}\\
 \mathcal E_N(1-N^{-1/3})&\le\tfrac1{12}\log N+C,
 \label{eq:entropy-base-extension}\\
 0\le\mathcal E_N'(\beta)&\le\tfrac\beta2
  \left(\E|\mathcal C_{N,p_N(\beta)}(1)|-1\right),
 \label{eq:entropy-derivative-extension}
\end{align}
where $\mathcal C_{N,p}(1)$ denotes the component of vertex 1 in $G(N,p)$,
\[
 p_N(\beta)=\E\tanh^2(\gamma+\sqrt\gamma G),
 \qquad\gamma=\beta^2/N,\qquad G\sim N(0,1).
\]
The off-diagonal parameter $\lambda$ in that paper equals our $\beta$.
The first and third estimates hold for every $\beta\ge0$.
The elementary bound $\tanh^2x\le x^2$ yields, throughout the stated range,
\[
 p_N(\beta)\le\gamma+\gamma^2
 \le N^{-1}+A_BN^{-4/3},\qquad A_B=B+(1+B)^2.
\]
The critical random graph estimate
$\E\sum_i|\mathcal C_i|^2=O_A(N^{4/3})$
of~\cite[Corollary~5.3]{JansonSpencer2007}, numbered Corollary~5.2 in
its arXiv version, where the sum runs over the components
$\mathcal C_i$ of $G(N,p)$, is equivalent to
$\E|\mathcal C_{N,p}(1)|=N^{-1}\E\sum_i|\mathcal C_i|^2=O_A(N^{1/3})$ at
$p=N^{-1}+AN^{-4/3}$ for every fixed real $A$, including $A>0$.
Monotone coupling therefore bounds the right side of
\eqref{eq:entropy-derivative-extension} by $C_BN^{1/3}$.
Put $\beta_0=1-N^{-1/3}$ and $\beta_*=(1+BN^{-1/3})^{1/2}$.
Since $\beta_*-\beta_0\le(1+B/2)N^{-1/3}$, the integral is $O_B(1)$.
Use \eqref{eq:entropy-base-extension}, monotonicity of $\mathcal E_N$,
and \eqref{eq:entropy-var-extension} to conclude. Finitely many small
$N$ are absorbed into $C_B$.
\end{proof}

The correlations behind \eqref{eq:entropy-derivative-extension} are
averaged under the planted observation law. They do not give ordinary
SK susceptibility control. Also, at fixed $\beta>1$ outside the shrinking
window, the susceptibility inequality, valid for every $\beta>0$, is not asserted to improve the known superconcentration bound~\cite{Chatterjee2009}.
For fixed $\beta>1$, the variance of the off-diagonal log-partition function
lies between $c_\beta N^{4/15}$ and $C_\beta N^{7/15}$ for all sufficiently
large $N$~\cite[Theorem~1.4]{AronowLopatto2026}.

\subsection{Comparison of ordinary overlap moments}

For $s\ge0$ write $\nu_s(f)=\E\langle f\rangle_{\beta=\sqrt{s}}$ and
$m_p(s)=\nu_s(|R_{12}|^p)$, $p>0$. These averages use the ordinary
Gaussian disorder law.

\begin{lemma}[Temperature comparison]\label{lem:temperature-comparison}
For every $p>0$ and $s,s_0\ge0$,
\begin{equation}\label{eq:temperature-moment-comparison}
 m_p(s)\le\nu_{s_0}\!\left(|R_{12}|^p
                  e^{2N|s-s_0|R_{12}^2}\right).
\end{equation}
\end{lemma}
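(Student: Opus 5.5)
The plan is to write the inverse-temperature dependence of the Gibbs measure as an exponential tilt and compare the two replicated laws through a Gaussian change of measure. Fix the disorder law and let $H=H_N^{\rm off}$ (or GOE; the Gibbs measures coincide by Lemma~\ref{lem:GOE-vs-offdiag}). At $\beta=\sqrt s$ write $\sqrt s\,H=\sqrt{s_0}\,H+(\sqrt s-\sqrt{s_0})H$, but the cleaner route is to realize $\sqrt{s}\,H$ in law as $\sqrt{s_0}\,H+\sqrt{|s-s_0|}\,H'$ when $s\ge s_0$, with $H'$ an independent copy. Then
\[
 m_p(s)=\E\frac{\sum_{\sigma^1,\sigma^2}|R_{12}|^p e^{\sqrt{s_0}(H(\sigma^1)+H(\sigma^2))}e^{\sqrt{s-s_0}(H'(\sigma^1)+H'(\sigma^2))}}{Z'^2},
\]
where $Z'$ is the partition function of the combined Hamiltonian.

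Next, I would bound this ratio by a two-replica Jensen/Gaussian computation. First, the conditional law given $H$ of the tilt: writing the Gibbs measure at $s$ as the measure at $s_0$ reweighted by $e^{\sqrt{s-s_0}H'}$, use the standard inequality for ratios of Gibbs averages, $\langle f W\rangle/\langle W\rangle^2$ style, combined with $\E_{H'}$. Concretely, by Jensen applied to $x\mapsto x^{-2}$ in the denominator (averaging the $H'$-tilt under the $s_0$ Gibbs measure, $\langle e^{\sqrt{s-s_0}H'}\rangle_{s_0}\ge e^{\sqrt{s-s_0}\langle H'\rangle_{s_0}}$), and then computing the Gaussian expectation over $H'$ of the numerator exactly: $\E_{H'}e^{\sqrt{s-s_0}(H'(\sigma^1)+H'(\sigma^2))-\dots}$ equals $e^{\frac{s-s_0}{2}\Var(\cdot)}$, and the covariance $\Cov(H'(\sigma^1),H'(\sigma^2))=\tfrac N2R_{12}^2$ produces the factor $e^{(s-s_0)NR_{12}^2}$ after the diagonal (spin-independent, or cancelling) variances are removed. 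The factor $2$ in the exponent is slack absorbing the cross terms with the Jensen centering; I expect to need Cauchy--Schwarz once, which doubles the exponent to $2N|s-s_0|R_{12}^2$.

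For $s<s_0$ one cannot write $\sqrt s H$ as a sum with positive variance increment, so I would instead swap roles: condition on the $s$-law and express the $s_0$ measure, or more simply use the symmetric formula via a Girsanov/Gaussian integration-by-parts in the parameter $u\in[s,s_0]$, i.e.\ differentiate $\log\nu_u(|R_{12}|^p e^{2N(u-s)R_{12}^2})$ and show the derivative has the right sign using $\frac{d}{du}\nu_u(f)=\frac N2\nu_u(f(\text{sum of }R^2\text{ terms}))$ with the standard bound that the positive part is at most $2N\nu_u(fR_{12}^2)$ in magnitude relative to the tilt. This derivative approach (Grönwall in $u$) is actually the uniform plan for both signs: define $\phi(u)=\nu_u(|R_{12}|^pe^{2N|u-s|R_{12}^2})$, show $\phi$ is monotone toward $s$, giving $m_p(s)=\phi(s)\le\phi(s_0)$.

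The main obstacle is the derivative formula: $\partial_u\nu_u(f)$ for a two-replica $f$ contains terms $\frac N2\nu_u(f(R_{11}^2+R_{22}^2\!-\!\dots+R_{12}^2-4R_{13}^2+3R_{34}^2))$-type combinations; the negative terms must be dropped with care since $f\ge0$, leaving a positive part bounded by $N\nu_u(fR_{12}^2)$ plus replica terms that are controlled only after symmetrization. I would first try to bound the positive part by $2N\nu_u(f R_{12}^2)$ using $f\ge0$ and exchangeability, which is exactly what the exponent $2N|u-s|$ differentiates to, making $\phi'$ signed.
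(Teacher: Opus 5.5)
Your first route cannot give the lemma. Writing $W=e^{\sqrt{s-s_0}\,H'}$, Jensen replaces $\langle W\rangle_{s_0}$ by $e^{\sqrt{s-s_0}\langle H'\rangle_{s_0}}$. This throws away the annealed factor: $\langle W\rangle_{s_0}$ has $H'$-average $e^{(s-s_0)N/4}$ in the GOE normalization, whereas $\langle H'\rangle_{s_0}$ has variance only $\frac N2\langle R_{34}^2\rangle_{s_0}$. The Gaussian average of the numerator, however, still contains the diagonal variance $\Var H'(\sigma^1)+\Var H'(\sigma^2)=N$. The leftover factor $e^{(s-s_0)N/2}$ is therefore not an artifact of cross terms, and no Cauchy--Schwarz step removes it. Already at $s_0=0$ your bound reads $e^{s(N-2)/2}\nu_0\bigl(|R_{12}|^pe^{sNR_{12}^2/2}\bigr)$, which for fixed small $s$ is exponentially larger than the right side of the lemma.

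The interpolation plan with $\phi(u)=\nu_u(|R_{12}|^pe^{2N|u-s|R_{12}^2})$ does work, and it is a genuinely different route. However, the step you flag as the obstacle is not closed by $f\ge0$ and exchangeability. For $f$ depending only on $R_{12}$ the diagonal terms cancel, and $\partial_u\nu_u(f)=\frac N2\nu_u\bigl(f(R_{12}^2-4R_{13}^2+3R_{34}^2)\bigr)$.

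\begin{itemize}
\item If $s>s_0$ you need $\phi'\le0$ on $[s_0,s]$, which requires $\nu_u(fR_{34}^2)\le\nu_u(fR_{12}^2)$.
\item If $s<s_0$ you need $\phi'\ge0$ on $[s,s_0]$. Here it is the \emph{negative} part of the derivative that must be controlled, not the positive part you mention, and this requires $\nu_u(fR_{13}^2)\le\nu_u(fR_{12}^2)$.
\end{itemize}

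Neither comparison follows from positivity and exchangeability. For $f=\mathbf 1\{R_{12}=0\}$ with $N$ even, the right sides vanish while the left sides are positive. At $u=0$ one even gets $\partial_u\nu_u(f)=-\frac12\nu_0(f)<0=-2N\nu_0(fR_{12}^2)$.

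What you need is that $f_u=\psi(R_{12}^2)$ with $\psi$ nondecreasing. Then exchangeability and $(\psi(x)-\psi(y))(x-y)\ge0$ give
\[
 \nu_u\bigl(\psi(R_{12}^2)R_{34}^2\bigr)
 =\tfrac12\,\nu_u\bigl(\psi(R_{12}^2)R_{34}^2+\psi(R_{34}^2)R_{12}^2\bigr)
 \le\nu_u\bigl(\psi(R_{12}^2)R_{12}^2\bigr),
\]
and the same holds with $R_{13}$ in place of $R_{34}$. Hence $-\frac{3N}{2}\nu_u(fR_{12}^2)\le\partial_u\nu_u(f)\le2N\nu_u(fR_{12}^2)$ for $f=f_u$, and $\phi$ is monotone in the required direction on either side of $s$.

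For comparison, the paper proves the derivative bound only for pure powers, where H\"older gives $|m_p'|\le2Nm_{p+2}$. It then iterates this hierarchy into the exponential series, with a remainder $(2Nd)^{L+1}/(L+1)!$ that vanishes at fixed $N$. Your tilted functional sums that series in a single monotonicity step. The price is that you need the comparison for non-polynomial $f$, which the rearrangement inequality supplies. H\"older applied termwise to the expansion of $e^{cR_{12}^2}$ would also work, and is the paper's argument in disguise.
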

\begin{proof}
Gaussian differentiation of the two Gibbs weights gives
\[
 m_p'(s)=\frac N2\nu_s\!\left(
 |R_{12}|^p(R_{12}^2-4R_{13}^2+3R_{34}^2)\right).
\]
Each mixed expectation is nonnegative and, by H\"older and replica
exchangeability, at most $m_{p+2}(s)$. Thus
\begin{equation}\label{eq:temperature-moment-hierarchy}
 |m_p'(s)|\le2N m_{p+2}(s).
\end{equation}
Iterating this inequality along the interval from $s_0$ to $s$ gives,
with $d=|s-s_0|$,
\[
 m_p(s)\le\sum_{j=0}^L\frac{(2Nd)^j}{j!}m_{p+2j}(s_0)
              +\frac{(2Nd)^{L+1}}{(L+1)!}.
\]
The remainder uses $|R_{12}|\le1$ and vanishes as $L\to\infty$ at
fixed $N,d$. Summing proves \eqref{eq:temperature-moment-comparison}.
\end{proof}

\begin{lemma}[A tail bound for larger critical overlaps]
\label{lem:quartic-overlap-tail}
There are $c,C>0$ such that, for sufficiently large $N$ and
$N^{-2/9}\le u\le1$,
\begin{equation}\label{eq:quartic-overlap-tail}
 \nu_1(|R_{12}|\ge u)\le C e^{-cNu^4}.
\end{equation}
\end{lemma}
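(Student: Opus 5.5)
We derive \eqref{eq:quartic-overlap-tail} from the critical free-energy tail through a standard coupled-replica (Franz--Parisi type) large-deviation bound. At $\beta=1$, $h=0$, write
\[
  Z_u:=\sum_{\sigma,\tau:\,|R_{\sigma,\tau}|\ge u}e^{H_N(\sigma)+H_N(\tau)},
  \qquad
  \nu_1(|R_{12}|\ge u)=\E\,\frac{Z_u}{Z_N^2}.
\]
The plan has three parts. First, bound $\E Z_u$ using the annealed computation. Second, control $Z_N^2$ from below by $(\E Z_N)^2$ up to a factor that is subexponential on the scale $Nu^4$. Third, combine the two with a concentration argument.

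\emph{Step 1 (annealed bound on the constrained pair).} Since $\Cov(H_N(\sigma),H_N(\tau))=\tfrac N2R^2$, we have
\[
  \E e^{H(\sigma)+H(\tau)}=e^{N/2+NR^2/2}
  \quad\text{(up to diagonal terms)}.
\]
Hence
\[
  \frac{\E Z_u}{(\E Z_N)^2}=2^{-N}\sum_{|R|\ge u}\binom{N}{N(1+R)/2}e^{NR^2/2}.
\]
By Stirling, the exponent is $-N\bigl(I(R)-R^2/2\bigr)$, where
\[
  I(R)=\tfrac{1+R}{2}\log(1+R)+\tfrac{1-R}{2}\log(1-R)=\tfrac{R^2}{2}+\tfrac{R^4}{12}+\dots.
\]
At $\beta=1$ the quadratic terms cancel exactly. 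This leaves the Curie--Weiss quartic rate: $I(R)-R^2/2\ge R^4/12$ on $[-1,1]$, since all the Taylor coefficients are positive. Summing over the $O(N)$ admissible values of $R$ gives
\[
  \E Z_u\le C N e^{-Nu^4/12}(\E Z_N)^2.
\]

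\emph{Step 2 (lower tail of $Z_N$).} We need an event $\Omega_N$ on which $Z_N\ge e^{-x}\E Z_N$, with $x=\epsilon Nu^4$ and $\mathbb P(\Omega_N^c)\le Ce^{-cNu^4}$. Gaussian concentration of $\log Z_N$ (Lipschitz constant $O(1)$ in the disorder) gives
\[
  \mathbb P\bigl(|\log Z_N-\E\log Z_N|\ge x\bigr)\le 2e^{-x^2/C}.
\]
We also have $\E\log Z_N\ge\log\E Z_N-O(\log N)$, for instance from $N^2\Var F_N=O(\log N)$ (Theorem~\ref{thm:critical-variance-upper}) combined with the Paley--Zygmund second-moment bound $\E Z_N^2/(\E Z_N)^2=O(N^{1/4})$ quoted in Remark~\ref{rem:dictionary-instances}. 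Since $u\ge N^{-2/9}$, we have $Nu^4\ge N^{1/9}\gg\log N$, so the $O(\log N)$ shift is absorbed. The tail is then $e^{-(\epsilon Nu^4)^2/C}\le e^{-cNu^4}$ because $Nu^4\ge1$.

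\emph{Step 3 (combination).} On $\Omega_N$,
\[
  \frac{Z_u}{Z_N^2}\le e^{2\epsilon Nu^4}\,\frac{Z_u}{(\E Z_N)^2}.
\]
Off $\Omega_N$, the ratio $Z_u/Z_N^2$ is at most $1$. Taking expectations,
\[
  \nu_1(|R_{12}|\ge u)\le CNe^{-(1/12-2\epsilon)Nu^4}+Ce^{-cNu^4}.
\]
Choose $\epsilon=1/48$. The factor $N$ is absorbed because $Nu^4\ge N^{1/9}$.

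The main obstacle is Step 2, specifically matching the quenched and annealed normalizations with a loss that is $o(Nu^4)$ uniformly down to $u=N^{-2/9}$. The lower limit $N^{-2/9}$ is exactly where $Nu^4$ becomes a power of $N$, which beats the logarithmic gap $\log\E Z_N-\E\log Z_N$. If the Paley--Zygmund route for that gap proves delicate, the fallback is to use directly the bound $\log\E Z_N-\E\log Z_N\le\tfrac14\log N+O(1)$ from the second-moment/Gaussian convexity inequality of~\cite{Schertzer2026CriticalSK}.
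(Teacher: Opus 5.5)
\textbf{There is a genuine gap in Step 2.} Your overall structure is the same as the paper's. It uses the annealed bound on the constrained two-replica sum, with the quartic rate $I(q)-q^2/2\ge q^4/12$ and at most $N+1$ overlap values. It then takes a lower-tail event for $Z_N$ on which $Z_N^{-2}$ is replaced by a deterministic bound, and chooses $x=Nu^4/48$. Steps~1 and~3 are fine.

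The claim in Step~2 that $\log Z_N$ is $O(1)$-Lipschitz in the disorder is false. The gradient with respect to the couplings is $N^{-1/2}\langle\sigma_i\sigma_j\rangle$, so $|\nabla\log Z_N|^2=N^{-1}\sum_{i<j}\langle\sigma_i\sigma_j\rangle^2$. Its supremum over the disorder is $(N-1)/2$, approached when the Gibbs measure concentrates on one pair $\pm\sigma$. Gaussian concentration therefore gives only $\mathbb P(|\log Z_N-\E\log Z_N|\ge x)\le 2e^{-x^2/(CN)}$. At $x=\epsilon Nu^4$ this is $2e^{-\epsilon^2Nu^8/C}$. At $u=N^{-2/9}$ that equals $2e^{-\epsilon^2N^{-7/9}/C}$, which is of order one.

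No other choice of $x$ rescues this. You would need $x\le c_1Nu^4$ and $x^2/(CN)\ge cNu^4$ at once, which forces $u$ to stay bounded below by a positive constant. The bound $\Var\log Z_N=O(\log N)$ does not by itself give sub-Gaussian tails on that scale.

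\textbf{What is missing} is a critical lower-tail estimate for the free energy at polylogarithmic scale. The paper takes it from \cite[Lemma~5.5]{DuHuang2026}. With $\mathcal F=\log(2^{-N}Z_N^{\rm GOE}(1))$ and $b_N=N/4-(\log N)/12$, where $N/4=\log(2^{-N}\E Z_N^{\rm GOE}(1))$, it reads $\mathbb P(\mathcal F<b_N-v)\le 2e^{-v^2/(C\log^2N)}$.
\begin{itemize}
\item On the complementary event one has $Z^{-2}\le e^{-2b_N+2v}$. This puts the factor $N^{1/6}e^{2v}$ in front of your annealed sum.
\item Take $v=Nu^4/48$. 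Since $Nu^4\ge N^{1/9}\gg\log^2N$, the term $2e^{-v^2/(C\log^2N)}$ is at most $Ce^{-cNu^4}$.
\item The estimate is centered directly at $\log\E Z$ minus $\tfrac1{12}\log N$. So no separate bound on $\log\E Z_N-\E\log Z_N$ is needed.
\end{itemize}

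\textbf{Your mean-gap route is also too weak.} Paley--Zygmund gives probability $\gtrsim N^{-1/4}$. Combined with Chebyshev and $\Var\log Z_N=O(\log N)$, this yields only $\log\E Z_N-\E\log Z_N\lesssim N^{1/8}\sqrt{\log N}$. That is not $o(Nu^4)$ at $u=N^{-2/9}$, since $1/8>1/9$.
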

\begin{proof}
For this calculation let $Z=2^{-N}Z_N^{\rm GOE}(1)$ and $\mathcal F=\log Z$.
Its Gibbs measure is the same as that of the off-diagonal model.
Put $b_N=N/4-(\log N)/12$. The free-energy tail estimate
in~\cite[Lemma~5.5]{DuHuang2026} is
\[
 \mathbb P(\mathcal F<b_N-v)
 \le2e^{-v^2/(C\log^2N)},\qquad v\ge0.
\]
On the complementary event, bound $Z^{-2}$ by $e^{-2b_N+2v}$.
The Gaussian expectation of the restricted two-replica numerator is
$e^{N/2}\E_{\rm unif}[e^{NR_{12}^2/2}\mathbf1_{\{|R_{12}|\ge u\}}]$.
The binomial entropy
\[
 I(q)=\tfrac12(1+q)\log(1+q)+\tfrac12(1-q)\log(1-q)
 \ge q^2/2+q^4/12
\]
and the at most $N+1$ overlap values give
\begin{equation}\label{eq:tail-split}
 \nu_1(|R_{12}|\ge u)
 \le N^{1/6}(N+1)e^{2v-Nu^4/12}
             +2e^{-v^2/(C\log^2N)}.
\end{equation}
Set $v=Nu^4/48$. Since $Nu^4\ge N^{1/9}\gg\log^2N$ in the stated
range, the polynomial factor and the second term are absorbed into
$Ce^{-cNu^4}$. This proves \eqref{eq:quartic-overlap-tail}.
\end{proof}

\begin{theorem}[Stability of the critical overlap moments]
\label{thm:moment-stability}
There is $\eta>0$ such that, for every fixed $p>0$, uniformly on
$s\ge0$, $|s-1|\le\eta N^{-4/9}$,
\begin{align}
 m_p(s)&\le C_pN^{-p/3},\label{eq:uniform-moment-temperature}\\
 |m_p(s)-m_p(1)|&\le C_pN^{-p/3-1/9}.
 \label{eq:moment-temperature-stability}
\end{align}
In particular,
\begin{equation}\label{eq:kurtosis-temperature-stability}
 \frac{m_4(s)}{m_2(s)^2}
 =\frac{m_4(1)}{m_2(1)^2}+O(N^{-1/9})=O(1).
\end{equation}
\end{theorem}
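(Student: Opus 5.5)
The plan is to combine Lemma~\ref{lem:temperature-comparison} with critical overlap tails at $s_0=1$. Write $d=|s-1|\le\eta N^{-4/9}$. By \eqref{eq:temperature-moment-comparison},
\[
 m_p(s)\le\nu_1\bigl(|R_{12}|^p e^{2NdR_{12}^2}\bigr).
\]
We split the right side according to $|R_{12}|$, using three regions: $|R_{12}|\le N^{-2/9}$, $N^{-2/9}<|R_{12}|\le\delta$, and $|R_{12}|>\delta$ for a small fixed $\delta$ (which may be merged with the middle region).

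On the small region the exponent satisfies $2NdR_{12}^2\le2\eta N^{1-4/9-4/9}=2\eta N^{1/9}$. This is not bounded, so the cut must be finer. We instead use the critical linear exponential tail $\nu_1(N^{1/3}|R_{12}|\ge x)\le Ce^{-cx}$. We take this from the critical estimates of~\cite{DuHuang2026} (Proposition~5.1 and Lemma~5.5 style inputs), which the conclusions describe as ``the critical linear exponential tail''. Writing $x=N^{1/3}|R_{12}|$, the exponent is $2NdR_{12}^2=2dN^{1/3}x^2\le2\eta N^{-1/9}x^2$. For $x\le N^{1/9}/(4\eta)\cdot c$ this is at most $cx/2$. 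The linear tail therefore dominates, and $\nu_1(x^p e^{2\eta N^{-1/9}x^2}\mathbf 1_{x\le\kappa N^{1/9}})=O_p(1)$, giving $O_p(N^{-p/3})$ after rescaling. The threshold $x\asymp N^{1/9}$ corresponds to $|R_{12}|\asymp N^{-2/9}$, which is exactly where Lemma~\ref{lem:quartic-overlap-tail} starts; this matching explains the exponent $4/9$. On $|R_{12}|\ge\kappa N^{-2/9}$ we use \eqref{eq:quartic-overlap-tail} and a layer-cake decomposition. The tail $e^{-cNu^4}$ beats $e^{2\eta N^{5/9}u^2}$ because $Nu^4\ge N^{5/9}u^2\cdot N^{4/9}u^2\ge N^{5/9}u^2$ when $u\ge N^{-2/9}$. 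Choosing $\eta$ small relative to $c$ makes this contribution $O(e^{-c'N^{1/9}})$, which is negligible. This proves \eqref{eq:uniform-moment-temperature}. The lower-side comparison is symmetric since the lemma covers both orders of $s,s_0$.

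For \eqref{eq:moment-temperature-stability} we integrate \eqref{eq:temperature-moment-hierarchy}. This gives $|m_p(s)-m_p(1)|\le2N d\sup_{|s'-1|\le d}m_{p+2}(s')$, and the interval of $s'$ lies inside the same window. Applying \eqref{eq:uniform-moment-temperature} with $p+2$ yields $2N\eta N^{-4/9}C_{p+2}N^{-(p+2)/3}=O(N^{-p/3-1/9})$. Finally, \eqref{eq:kurtosis-temperature-stability} follows from these bounds and the critical lower bound $m_2(1)\asymp N^{-2/3}$ of~\cite{DuHuang2026}, since relative errors $O(N^{-1/9})$ then pass to the ratio, and $m_4(1)/m_2(1)^2=O(1)$.

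The main obstacle is the first region. It requires a genuinely exponential (linear in $N^{1/3}|R|$) critical tail at $\beta=1$, uniform in $N$, and careful matching of the cut $N^{-2/9}$ so that the Gaussian factor $e^{2\eta N^{-1/9}x^2}$ is absorbed on both sides of the cut. I would first try to derive that tail from the fourth-moment and free-energy tail inputs by the same entropy argument as Lemma~\ref{lem:quartic-overlap-tail}, and otherwise cite it directly.
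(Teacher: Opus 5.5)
Your proposal is correct and follows the paper's proof essentially step for step. You use Lemma~\ref{lem:temperature-comparison} at $s_0=1$ and split at $|R_{12}|\asymp N^{-2/9}$: the linear exponential tail absorbs $e^{2NdR_{12}^2}$ below the cut, and Lemma~\ref{lem:quartic-overlap-tail} with a layer-cake bound handles the region above, with $\eta$ small against both tail constants and independent of $p$. You then integrate \eqref{eq:temperature-moment-hierarchy} using the $p+2$ bound and use $m_2(1)\ge c_*N^{-2/3}$ for the ratio, exactly as the paper does.

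One correction concerns the linear tail. The paper cites it directly as $\nu_1(e^{aN^{1/3}|R_{12}|})\le2$ from \cite[Theorem~1.4]{DuHuang2026}. Your first-choice route of deriving it from the fourth-moment bound and the entropy argument would not succeed. Markov's inequality on $m_4$ gives only polynomial tails, and the entropy argument keeps only the quartic term $q^4/12$, which gives nothing at the scale $|R_{12}|\asymp N^{-1/3}$.
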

\begin{proof}
The critical input~\cite[Theorem~1.4]{DuHuang2026} gives
\[
 \nu_1(e^{aN^{1/3}|R_{12}|})\le2,
 \qquad m_2(1)\ge c_*N^{-2/3}
\]
for fixed $a,c_*>0$. Put $Y=|R_{12}|$, $r=N^{-2/9}$,
$d=|s-1|$, and $\vartheta=2Nd\le2\eta N^{5/9}$.
For $Y\le r$ and $\eta\le a/4$,
$\vartheta Y^2\le(a/2)N^{1/3}Y$. Absorbing the fixed power in the
remaining exponential gives
\[
 \nu_1(Y^pe^{\vartheta Y^2}\mathbf1_{\{Y\le r\}})
 \le C_pN^{-p/3}.
\]
For $u\ge r$, choose also $\eta\le c/4$, where $c$ is from
\eqref{eq:quartic-overlap-tail}. Then
$\vartheta u^2\le(c/2)Nu^4$. With $f(u)=u^pe^{\vartheta u^2}$,
integration of the tail gives
\begin{align*}
 \nu_1(f(Y)\mathbf1_{\{Y>r\}})
 &\le f(r)\nu_1(Y\ge r)+\int_r^1 f'(u)\nu_1(Y\ge u)\dd u\\
 &\le C\left[r^pe^{-cNr^4/2}
  +\int_r^1(pu^{p-1}+2\vartheta u^{p+1})e^{-cNu^4/2}\dd u\right]\\
 &\le C_p(1+\vartheta)e^{-cN^{1/9}/2}=O_p(N^{-p/3}).
\end{align*}
Lemma~\ref{lem:temperature-comparison} proves
\eqref{eq:uniform-moment-temperature}. Integrating
\eqref{eq:temperature-moment-hierarchy} and using the bound with $p+2$
gives $2N\eta N^{-4/9}C_{p+2}N^{-(p+2)/3}
=O_p(N^{-p/3-1/9})$.
The lower bound for $m_2(1)$ therefore persists on this interval.
Applying the estimates at $p=2,4$ to the ratio proves
\eqref{eq:kurtosis-temperature-stability}. Finitely many small sizes
are included by enlarging constants.
\end{proof}

The exponent $4/9$ comes from balancing the linear critical tail
$N^{1/3}u$ and the quartic tail $Nu^4$ at $u=N^{-2/9}$.
Using only the linear tail and $|R_{12}|\le1$ gives the shorter
$N^{-2/3}$ temperature interval. Neither estimate reaches a fixed
$N^{-1/3}$ window. A uniform critical bound
$\nu_1(e^{a_2N^{2/3}R_{12}^2})\le C$ would, by
\eqref{eq:temperature-moment-comparison}, control every fixed moment
on a small interval at that scale. Such a quadratic exponential moment
is not proved here and does not follow from the stated linear
exponential moment alone.

\paragraph*{Use of AI tools.}
This work originates in a research project initiated by the author well before the use of generative AI tools.
During the preparation and revision of this manuscript, the author used ChatGPT and Claude to assist with the development and checking of mathematical arguments and calculations, the preparation of code for computational checks, and the revision of the exposition.
The author takes full responsibility for the contents of the manuscript.

{
\small
\setlength{\bibsep}{4pt plus 1pt minus 1pt}

}


\begin{thebibliography}{46}
\providecommand{\natexlab}[1]{#1}
\providecommand{\url}[1]{\texttt{#1}}
\expandafter\ifx\csname urlstyle\endcsname\relax
  \providecommand{\doi}[1]{doi: #1}\else
  \providecommand{\doi}{doi: \begingroup \urlstyle{rm}\Url}\fi

\bibitem[Sherrington and Kirkpatrick(1975)]{SherringtonKirkpatrick1975}
David Sherrington and Scott Kirkpatrick.
\newblock Solvable model of a spin-glass.
\newblock \emph{Physical Review Letters}, 35\penalty0 (26):\penalty0
  1792--1796, 1975.
\newblock \doi{10.1103/PhysRevLett.35.1792}.

\bibitem[Guerra and
  Toninelli(2002{\natexlab{a}})]{GuerraToninelli2002ThermoLimit}
Francesco Guerra and Fabio~Lucio Toninelli.
\newblock The thermodynamic limit in mean field spin glass models.
\newblock \emph{Communications in Mathematical Physics}, 230\penalty0
  (1):\penalty0 71--79, 2002{\natexlab{a}}.
\newblock \doi{10.1007/s00220-002-0699-y}.

\bibitem[Parisi(1979)]{Parisi1979Infinite}
Giorgio Parisi.
\newblock Infinite number of order parameters for spin-glasses.
\newblock \emph{Physical Review Letters}, 43\penalty0 (23):\penalty0
  1754--1756, 1979.
\newblock \doi{10.1103/PhysRevLett.43.1754}.

\bibitem[Parisi(1980)]{Parisi1980OrderParameter}
Giorgio Parisi.
\newblock The order parameter for spin glasses: a function on the interval
  0--1.
\newblock \emph{Journal of Physics A: Mathematical and General}, 13\penalty0
  (3):\penalty0 1101--1112, 1980.
\newblock \doi{10.1088/0305-4470/13/3/042}.

\bibitem[Guerra(2003)]{Guerra2003}
Francesco Guerra.
\newblock Broken replica symmetry bounds in the mean field spin glass model.
\newblock \emph{Communications in Mathematical Physics}, 233\penalty0
  (1):\penalty0 1--12, 2003.
\newblock \doi{10.1007/s00220-002-0773-5}.

\bibitem[Talagrand(2006)]{Talagrand2006Parisi}
Michel Talagrand.
\newblock The {P}arisi formula.
\newblock \emph{Annals of Mathematics}, 163\penalty0 (1):\penalty0 221--263,
  2006.
\newblock \doi{10.4007/annals.2006.163.221}.

\bibitem[Talagrand(2011{\natexlab{a}})]{Talagrand2011Vol1}
Michel Talagrand.
\newblock \emph{Mean Field Models for Spin Glasses. Volume I: Basic Examples},
  volume~54 of \emph{Ergebnisse der Mathematik und ihrer Grenzgebiete. 3. Folge
  / A Series of Modern Surveys in Mathematics}.
\newblock Springer, 2011{\natexlab{a}}.
\newblock ISBN 978-3-642-15201-6.
\newblock \doi{10.1007/978-3-642-15202-3}.

\bibitem[Panchenko(2013)]{Panchenko2013Book}
Dmitry Panchenko.
\newblock \emph{The {S}herrington--{K}irkpatrick Model}.
\newblock Springer Monographs in Mathematics. Springer, 2013.
\newblock ISBN 978-1-4614-6288-0.
\newblock \doi{10.1007/978-1-4614-6289-7}.

\bibitem[Aizenman et~al.(1987)Aizenman, Lebowitz, and
  Ruelle]{AizenmanLebowitzRuelle1987}
Michael Aizenman, Joel~L. Lebowitz, and David Ruelle.
\newblock Some rigorous results on the {S}herrington--{K}irkpatrick spin glass
  model.
\newblock \emph{Communications in Mathematical Physics}, 112\penalty0
  (1):\penalty0 3--20, 1987.
\newblock \doi{10.1007/BF01217677}.

\bibitem[Comets and Neveu(1995)]{CometsNeveu1995}
Francis Comets and Jacques Neveu.
\newblock The {S}herrington--{K}irkpatrick model of spin glasses and stochastic
  calculus: The high temperature case.
\newblock \emph{Communications in Mathematical Physics}, 166\penalty0
  (3):\penalty0 549--564, 1995.
\newblock \doi{10.1007/BF02099887}.

\bibitem[Guerra and Toninelli(2002{\natexlab{b}})]{GuerraToninelli2002CLT}
Francesco Guerra and Fabio~Lucio Toninelli.
\newblock Central limit theorem for fluctuations in the high temperature region
  of the {S}herrington--{K}irkpatrick spin glass model.
\newblock \emph{Journal of Mathematical Physics}, 43\penalty0 (12):\penalty0
  6224--6237, 2002{\natexlab{b}}.
\newblock \doi{10.1063/1.1515109}.
\newblock URL \url{https://arxiv.org/abs/cond-mat/0201092}.

\bibitem[de~Almeida and Thouless(1978)]{AlmeidaThouless1978}
J.~R.~L. de~Almeida and D.~J. Thouless.
\newblock Stability of the {S}herrington--{K}irkpatrick solution of a spin
  glass model.
\newblock \emph{Journal of Physics A: Mathematical and General}, 11\penalty0
  (5):\penalty0 983--990, 1978.
\newblock \doi{10.1088/0305-4470/11/5/028}.

\bibitem[M{\'e}zard et~al.(1987)M{\'e}zard, Parisi, and
  Virasoro]{MezardParisiVirasoro1987}
Marc M{\'e}zard, Giorgio Parisi, and Miguel~A. Virasoro.
\newblock \emph{Spin Glass Theory and Beyond: An Introduction to the Replica
  Method and Its Applications}, volume~9 of \emph{World Scientific Lecture
  Notes in Physics}.
\newblock World Scientific, 1987.
\newblock ISBN 9789971501167.
\newblock \doi{10.1142/0271}.

\bibitem[Chen and Lam(2019)]{ChenLam2019}
Wei-Kuo Chen and Wai-Kit Lam.
\newblock Order of fluctuations of the free energy in the {SK} model at
  critical temperature.
\newblock \emph{ALEA, Latin American Journal of Probability and Mathematical
  Statistics}, 16:\penalty0 809--816, 2019.
\newblock \doi{10.30757/ALEA.v16-29}.

\bibitem[Collins-Woodfin and Le(2025{\natexlab{a}})]{CollinsWoodfinLe2025}
Elizabeth~W. Collins-Woodfin and Han~Gia Le.
\newblock Free energy fluctuations in {SK} and related spin glass models: A
  literature survey, 2025{\natexlab{a}}.
\newblock URL \url{https://arxiv.org/abs/2510.26960}.
\newblock arXiv:2510.26960.

\bibitem[Dey and Kang(2026)]{DeyKang2026}
Partha~S. Dey and Taegu Kang.
\newblock Fluctuations for the {S}herrington--{K}irkpatrick spin glass model
  near the critical temperature, 2026.
\newblock URL \url{https://arxiv.org/abs/2603.05636}.
\newblock arXiv:2603.05636.

\bibitem[Landon(2022)]{Landon2022SphericalCritical}
Benjamin Landon.
\newblock Free energy fluctuations of the two-spin spherical
  {SK} model at critical temperature.
\newblock \emph{Journal of Mathematical Physics}, 63\penalty0 (3):\penalty0
  033301, 2022.
\newblock \doi{10.1063/5.0054298}.

\bibitem[Aspelmeier(2008{\natexlab{a}})]{Aspelmeier2008PRL}
Timo Aspelmeier.
\newblock Free-energy fluctuations and chaos in the
  {S}herrington--{K}irkpatrick model.
\newblock \emph{Physical Review Letters}, 100:\penalty0 117205,
  2008{\natexlab{a}}.
\newblock \doi{10.1103/PhysRevLett.100.117205}.

\bibitem[Aspelmeier(2008{\natexlab{b}})]{Aspelmeier2008JStatMech}
Timo Aspelmeier.
\newblock An exact relation between free energy fluctuations and bond chaos in
  the {S}herrington--{K}irkpatrick model.
\newblock \emph{Journal of Statistical Mechanics: Theory and Experiment},
  2008\penalty0 (04):\penalty0 P04018, 2008{\natexlab{b}}.
\newblock \doi{10.1088/1742-5468/2008/04/P04018}.
\newblock URL \url{https://arxiv.org/abs/0802.3007}.

\bibitem[Schertzer(2026)]{Schertzer2026CriticalSK}
Adrien Schertzer.
\newblock The order of free energy fluctuations in the critical
  {S}herrington--{K}irkpatrick model revisited, 2026.
\newblock URL \url{https://arxiv.org/abs/2606.21360}.
\newblock Preprint, 18 pages, arXiv:2606.21360.

\bibitem[Chatterjee(2009)]{Chatterjee2009}
Sourav Chatterjee.
\newblock Disorder chaos and multiple valleys in spin glasses, 2009.
\newblock URL \url{https://arxiv.org/abs/0907.3381}.
\newblock arXiv:0907.3381; later expanded into the monograph
  \emph{Superconcentration and Related Topics} (Springer, 2014).

\bibitem[Talagrand(2011{\natexlab{b}})]{Talagrand2011Vol2}
Michel Talagrand.
\newblock \emph{Mean Field Models for Spin Glasses. Volume II: Advanced
  Replica-Symmetry and Low Temperature}, volume~55 of \emph{Ergebnisse der
  Mathematik und ihrer Grenzgebiete. 3. Folge}.
\newblock Springer, Heidelberg, 2011{\natexlab{b}}.
\newblock \doi{10.1007/978-3-642-22253-5}.

\bibitem[Du and Huang(2026)]{DuHuang2026}
Hang Du and Brice Huang.
\newblock Fluctuations of the {S}herrington--{K}irkpatrick free energy at
  critical temperature, 2026.
\newblock URL \url{https://arxiv.org/abs/2607.02172}.
\newblock arXiv:2607.02172.

\bibitem[Chen(2026)]{YimingChen2026}
Yiming Chen.
\newblock Gaussian convexity principles for sharp moderate deviations of
  Gaussian maxima and critical {SK} free energy variance, 2026.
\newblock URL \url{https://arxiv.org/abs/2607.21392}.
\newblock arXiv:2607.21392v3.

\bibitem[Du and Huang(2026)]{DuHuang2026Overlap}
Hang Du and Brice Huang.
\newblock Overlap distribution of the critical
  {Sherrington--Kirkpatrick} model, 2026.
\newblock URL \url{https://arxiv.org/abs/2608.08752}.
\newblock arXiv:2608.08752.

\bibitem[Tierz(2026)]{Tierz2026Constants}
Miguel Tierz.
\newblock In preparation, 2026.

\bibitem[Bakry et~al.(2014)Bakry, Gentil, and Ledoux]{BakryGentilLedoux2014}
Dominique Bakry, Ivan Gentil, and Michel Ledoux.
\newblock \emph{Analysis and Geometry of {M}arkov Diffusion Operators}, volume
  348 of \emph{Grundlehren der Mathematischen Wissenschaften}.
\newblock Springer, 2014.
\newblock \doi{10.1007/978-3-319-00227-9}.

\bibitem[Chatterjee(2014)]{Chatterjee2014}
Sourav Chatterjee.
\newblock \emph{Superconcentration and Related Topics}.
\newblock Springer Monographs in Mathematics. Springer, 2014.
\newblock \doi{10.1007/978-3-319-03886-5}.

\bibitem[Aspelmeier et~al.(2016)Aspelmeier, Katzgraber, Larson, Moore,
  Wittmann, and Yeo]{AspelmeierKatzgraberLarsonMooreWittmannYeo2016}
Timo Aspelmeier, Helmut~G. Katzgraber, Derek Larson, M.~A. Moore, Matthew
  Wittmann, and Joonhyun Yeo.
\newblock Finite-size critical scaling in {I}sing spin glasses in the
  mean-field regime.
\newblock \emph{Physical Review E}, 93\penalty0 (3):\penalty0 032123, 2016.
\newblock \doi{10.1103/PhysRevE.93.032123}.

\bibitem[Price(1958)]{Price1958}
R.~Price.
\newblock A useful theorem for nonlinear devices having {G}aussian inputs.
\newblock \emph{{IRE} Transactions on Information Theory}, 4\penalty0
  (2):\penalty0 69--72, 1958.
\newblock \doi{10.1109/TIT.1958.1057444}.

\bibitem[Bouchaud et~al.(2003)Bouchaud, Krzakala, and
  Martin]{BouchaudKrzakalaMartin2003}
Jean-Philippe Bouchaud, Florent Krzakala, and Olivier~C. Martin.
\newblock Energy exponents and corrections to scaling in {I}sing spin glasses.
\newblock \emph{Physical Review B}, 68\penalty0 (22):\penalty0 224404, 2003.
\newblock \doi{10.1103/PhysRevB.68.224404}.

\bibitem[Tracy and Widom(1996)]{TracyWidom1996}
Craig~A. Tracy and Harold Widom.
\newblock On orthogonal and symplectic matrix ensembles.
\newblock \emph{Communications in Mathematical Physics}, 177\penalty0
  (3):\penalty0 727--754, 1996.
\newblock \doi{10.1007/BF02099545}.

\bibitem[Baik and Lee(2016)]{BaikLee2016}
Jinho Baik and Ji~Oon Lee.
\newblock Fluctuations of the free energy of the spherical
  {S}herrington--{K}irkpatrick model.
\newblock \emph{Journal of Statistical Physics}, 165\penalty0 (2):\penalty0
  185--224, 2016.
\newblock \doi{10.1007/s10955-016-1610-0}.

\bibitem[Baik and Lee(2017)]{BaikLee2017Ferromagnetic}
Jinho Baik and Ji~Oon Lee.
\newblock Fluctuations of the free energy of the spherical
  {S}herrington--{K}irkpatrick model with ferromagnetic interaction.
\newblock \emph{Annales Henri Poincar\'e}, 18\penalty0 (6):\penalty0
  1867--1917, 2017.
\newblock \doi{10.1007/s00023-017-0562-5}.

\bibitem[Collins-Woodfin and
  Le(2025{\natexlab{b}})]{CollinsWoodfinLe2025Bipartite}
Elizabeth~W. Collins-Woodfin and Han~Gia Le.
\newblock Free energy fluctuations of the bipartite spherical {SK} model at
  critical temperature.
\newblock \emph{Annales Henri Poincar\'e}, 26\penalty0 (3):\penalty0
  1087--1147, 2025{\natexlab{b}}.
\newblock \doi{10.1007/s00023-024-01448-9}.

\bibitem[Johnstone et~al.(2024)Johnstone, Klochkov, Onatski, and
  Pavlyshyn]{JohnstoneKlochkovOnatskiPavlyshyn2024}
Iain~M. Johnstone, Yegor Klochkov, Alexei Onatski, and Damian Pavlyshyn.
\newblock Spin glass to paramagnetic transition and triple point in spherical
  {SK} model.
\newblock \emph{Journal of Statistical Physics}, 191\penalty0 (8):\penalty0 99,
  2024.
\newblock \doi{10.1007/s10955-024-03296-7}.

\bibitem[Janson and Spencer(2007)]{JansonSpencer2007}
Svante Janson and Joel Spencer.
\newblock A point process describing the component sizes in the critical window
  of the random graph evolution.
\newblock \emph{Combinatorics, Probability and Computing}, 16\penalty0 (4):\penalty0
  631--658, 2007.
\newblock \href{https://arxiv.org/abs/math/0505529}{arXiv:math/0505529}.

\bibitem[Helffer and Sj\"ostrand(1994)]{HelfferSjostrand1994}
Bernard Helffer and Johannes Sj\"ostrand.
\newblock On the correlation for {K}ac-like models in the convex case.
\newblock \emph{Journal of Statistical Physics}, 74\penalty0 (1--2):\penalty0
  349--409, 1994.
\newblock \doi{10.1007/BF02186817}.

\bibitem[Guerra and Toninelli(2002{\natexlab{c}})]{GuerraToninelli2002QRC}
Francesco Guerra and Fabio~Lucio Toninelli.
\newblock Quadratic replica coupling in the {S}herrington--{K}irkpatrick mean
  field spin glass model.
\newblock \emph{Journal of Mathematical Physics}, 43\penalty0 (7):\penalty0
  3704--3716, 2002{\natexlab{c}}.
\newblock \doi{10.1063/1.1483378}.

\bibitem[Chen et~al.(2017)Chen, Dey, and
  Panchenko]{ChenDeyPanchenko2017ExternalField}
Wei-Kuo Chen, Partha Dey, and Dmitry Panchenko.
\newblock Fluctuations of the free energy in the mixed $p$-spin models with
  external field.
\newblock \emph{Probability Theory and Related Fields}, 168\penalty0
  (1--2):\penalty0 41--53, 2017.
\newblock \doi{10.1007/s00440-016-0705-5}.

\bibitem[Barra et~al.(2015)Barra, Contucci, Mingione, and
  Tantari]{BarraContucciMingioneTantari2015}
Adriano Barra, Pierluigi Contucci, Emanuele Mingione, and Daniele Tantari.
\newblock Multi-{S}pecies mean field spin glasses. {R}igorous results.
\newblock \emph{Annales Henri Poincar\'e}, 16:\penalty0 691--708, 2015.
\newblock \doi{10.1007/s00023-014-0341-5}.

\bibitem[Panchenko(2015)]{Panchenko2015MSK}
Dmitry Panchenko.
\newblock The free energy in a multi-species {S}herrington--{K}irkpatrick
  model.
\newblock \emph{The Annals of Probability}, 43\penalty0 (6):\penalty0
  3494--3513, 2015.
\newblock \doi{10.1214/14-AOP967}.

\bibitem[Collins-Woodfin and Le(2025{\natexlab{c}})]{CollinsWoodfinLe2025MSK}
Elizabeth~W. Collins-Woodfin and Han~Gia Le.
\newblock Order of fluctuations of the free energy in the positive
  semi-definite {MSK} model at critical temperature, 2025{\natexlab{c}}.
\newblock URL \url{https://arxiv.org/abs/2501.11732}.
\newblock arXiv:2501.11732.

\bibitem[Janson(1997)]{Janson1997}
Svante Janson.
\newblock \emph{Gaussian Hilbert Spaces}, volume 129 of \emph{Cambridge Tracts
  in Mathematics}.
\newblock Cambridge University Press, 1997.
\newblock ISBN 978-0-521-56128-0.
\newblock \doi{10.1017/CBO9780511526169}.

\bibitem[Schur(1911)]{Schur1911}
I.~Schur.
\newblock Bemerkungen zur {T}heorie der beschr\"ankten {B}ilinearformen mit
  unendlich vielen {V}er\"anderlichen.
\newblock \emph{Journal f\"ur die reine und angewandte Mathematik},
  140:\penalty0 1--28, 1911.
\newblock \doi{10.1515/crll.1911.140.1}.

\bibitem[Horn and Johnson(2013)]{HornJohnson2013}
Roger~A. Horn and Charles~R. Johnson.
\newblock \emph{Matrix Analysis}.
\newblock Cambridge University Press, 2nd edition, 2013.


\bibitem[Aronow and Lopatto(2026)]{AronowLopatto2026}
P.~M. Aronow and Patrick Lopatto.
\newblock Quantitative {P}arisi formulas and fluctuations in the
  {S}herrington--{K}irkpatrick model, 2026.
\newblock \href{https://arxiv.org/abs/2609.09103}{arXiv:2609.09103}.

\end{thebibliography}
\end{document}